\documentclass[a4paper]{amsart}

\usepackage{tikz-cd}

\usepackage[%
backend=bibtex,%
style=alphabetic,%
natbib=false,%
isbn=false, 
doi=false, 
url=false, 
eprint=true, 
giveninits=true, 
backref=false, 
maxbibnames=99,%
]{biblatex}%

\AtEveryBibitem{%
  \ifentrytype{unpublished}{}{\clearfield{note}}%
  }%
  \renewbibmacro{in:}{} 

  \usepackage{amssymb,amsmath}%
  \usepackage{binarytree}%
  \usepackage{doi}%
  \usepackage{eucal}%
  \usepackage{graphicx}%
  \usepackage{mathscinet}%
  \usepackage{mathtools}%
  \usepackage{tikz-cd}%
  \usepackage{xparse}%

  \NewDocumentCommand{\set}{mo}{\{#1\IfValueT{#2}{\mid #2}\}}

  \newcommand{\hooklongrightarrow}{\lhook\joinrel\longrightarrow}
  
  \newcommand{\Spectra}{\operatorname{Sp}} \newcommand{\Spaces}{\mathcal{S}}

  \newcommand{\op}{{\mathrm{op}}} \newcommand{\rev}{{\mathrm{rev}}}
  \newcommand{\cop}{{\mathrm{cop}}}
  \newcommand{\noloc}{\rotatebox[origin=c]{180}{$\colon$}}
  
  \newcommand{\ZZ}{\mathbb{Z}} \newcommand{\kk}{\mathbf{k}}
  \newcommand{\EE}{\mathbb{E}} \newcommand{\A}{\mathcal{A}}
  \newcommand{\C}{\mathcal{C} }\newcommand{\B}{\mathcal{B}}
  \newcommand{\D}{\mathcal{D}} \newcommand{\F}{\mathcal{F}}
  \newcommand{\G}{\mathcal{G}} 
  \newcommand{\K}{\mathcal{K}} 
  \NewDocumentCommand{\W}{o}{\mathcal{W}\IfValueT{#1}{_{\mathrm{#1}}}}
   \newcommand{\E}{\mathcal{E}}
  \newcommand{\M}{\mathcal{M}} \newcommand{\N}{\mathcal{N}}
  
  \NewDocumentCommand{\Cofib}{}{\mathrm{Cofib}}
  \NewDocumentCommand{\Fib}{}{\mathrm{Fib}}
  \NewDocumentCommand{\Weq}{o}{\mathrm{W}\IfValueT{#1}{_{\mathrm{#1}}}}

  \NewDocumentCommand{\MLoc}{O{\Weq} m}{L_{#1}(#2)}

  \NewDocumentCommand{\unit}{o}{\mathbf{1}\IfValueT{#1}{_{#1}}}
  \NewDocumentCommand{\Lotimes}{s}{\otimes\IfBooleanT{#1}{^{\mathbb{L}}}}

  \NewDocumentCommand{\id}{o}{\mathrm{id}\IfValueT{#1}{_{#1}}}
  
  \usepackage{hyperref}%
  \hypersetup{%
    colorlinks,%
    linkcolor={red!80!black},%
    citecolor={blue!80!black},%
    urlcolor={blue!80!black}%
  }%

  \usepackage{thmtools}%
  \usepackage{cleveref}%

  \numberwithin{equation}{section}%

  \newcounter{thmintro}%

  \declaretheorem[style=theorem,sibling=equation]{corollary}%
  \declaretheorem[style=theorem,sibling=equation]{lemma}%
  \declaretheorem[style=theorem,sibling=equation]{proposition}%
  \declaretheorem[style=theorem,sibling=equation]{theorem}%
  \declaretheorem[style=theorem,sibling=thmintro,name=Theorem,
  Refname={Theorem,Theorems},numbered=no]{theorem intro}%
  \declaretheorem[style=theorem,sibling=thmintro,name=Corollary,
  Refname={Corollary,Corollaries},numbered=no]{corollary intro}%
  \declaretheorem[style=definition,sibling=equation]{definition}%
  \declaretheorem[style=definition,numbered=no,name=Definition]{definition*}%
  \declaretheorem[style=definition,sibling=equation,name={Definition-Proposition}]{defprop}%
  \declaretheorem[style=definition,sibling=equation,Refname={Notation,Notations}]{notation}%
  \declaretheorem[style=definition,sibling=equation]{variant}%
  \declaretheorem[style=definition,sibling=equation]{setting}%
  \declaretheorem[style=remark,sibling=equation]{remark}%
  \declaretheorem[style=remark,sibling=equation]{example}%

  \AddToHook{env/theorem/begin}{     \crefalias{equation}{theorem}}
  \AddToHook{env/example/begin}{     \crefalias{equation}{example}}
  \AddToHook{env/proposition/begin}{    \crefalias{equation}{proposition}}
  \AddToHook{env/lemma/begin}{     \crefalias{equation}{lemma}}
  \AddToHook{env/corollary/begin}{     \crefalias{equation}{corollary}}
  \AddToHook{env/remark/begin}{    \crefalias{equation}{remark}}
  \AddToHook{env/definition/begin}{    \crefalias{equation}{definition}}
  \AddToHook{env/defprop/begin}{    \crefalias{equation}{defprop}}
  \AddToHook{env/notation/begin}{    \crefalias{equation}{notation}}
  \AddToHook{env/variant/begin}{    \crefalias{equation}{variant}}

  \NewDocumentCommand{\Mod}{mo}{\operatorname{Mod}_{#1}\IfValueTF{#2}{(#2)}{}}
  \NewDocumentCommand{\LMod}{mD<>{}od<>}{\operatorname{LMod}_{#1}^{\mathrm{#2}}\IfValueTF{#3}{(#3)}{}\IfValueT{#4}{_{\mathrm{#4}}}}
  \NewDocumentCommand{\LModu}{mD<>{}o}{\underline{\operatorname{LMod}}_{#1}^{\mathrm{#2}}\IfValueTF{#3}{(#3)}{}}
  \NewDocumentCommand{\mmod}{mD<>{}o}{\operatorname{mod}_{#1}^{\mathrm{#2}}\IfValueTF{#3}{(#3)}{}}
  \NewDocumentCommand{\mmodu}{mD<>{}o}{\underline{\operatorname{mod}}_{#1}^{\mathrm{#2}}\IfValueTF{#3}{(#3)}{}}
  \NewDocumentCommand{\perf}{mD<>{}o}{\operatorname{perf}_{#1}^{\mathrm{#2}}\IfValueTF{#3}{(#3)}{}}
  \NewDocumentCommand{\RMod}{mD<>{}o}{\operatorname{RMod}_{#1}^{\mathrm{#2}}\IfValueTF{#3}{(#3)}{}}
  \NewDocumentCommand{\Bimod}{mmD<>{}o}{{}_{#1}\!\operatorname{Bimod}_{#2}^{\mathrm{#3}}\IfValueTF{#4}{(#4)}{}}
  \NewDocumentCommand{\StLMod}{smo}{\operatorname{StMod}_{#2}\IfValueTF{#3}{(#3)}{}}
  \NewDocumentCommand{\stLmod}{smo}{\operatorname{stmod}_{#2}\IfValueTF{#3}{(#3)}{}}

  \NewDocumentCommand{\augAlg}{om}{\operatorname{Alg}^{\mathrm{aug}}\IfValueTF{#1}{_{#1}}{}{(#2)}}
  \NewDocumentCommand{\Alg}{om}{\operatorname{Alg}\IfValueTF{#1}{_{#1}}{}{(#2)}}
  \NewDocumentCommand{\coAlg}{om}{\operatorname{coAlg}\IfValueTF{#1}{_{#1}}{}{(#2)}}
  \NewDocumentCommand{\biAlg}{om}{\operatorname{biAlg}\IfValueTF{#1}{_{#1}}{}{(#2)}}
  \NewDocumentCommand{\HopfAlg}{om}{\operatorname{Hopf}\IfValueTF{#1}{_{#1}}{}{(#2)}}
  \NewDocumentCommand{\CAlg}{om}{\operatorname{CAlg}\IfValueT{#1}{_{#1}}{(#2)}}
  
  \NewDocumentCommand{\Inj}{m}{\operatorname{LMod}_{{#1}}^{\mathrm{inj}}}
  \NewDocumentCommand{\Proj}{m}{\operatorname{LMod}_{#1}^{\mathrm{proj}}}

  \NewDocumentCommand{\Ho}{m}{\operatorname{Ho}(#1)}

  \NewDocumentCommand{\ldual}{m}{{#1}^\vee}
  \NewDocumentCommand{\rdual}{m}{{}^\vee{#1}}

  \NewDocumentCommand{\comment}{om}{\footnote{{\color{blue}\IfValueT{#1}{#1:
        }#2}}}
  
  \NewDocumentCommand{\coev}{o}{\operatorname{coev}\IfValueT{#1}{_{#1}}}
  \NewDocumentCommand{\ev}{o}{\operatorname{ev}\IfValueT{#1}{_{#1}}}
  \RenewDocumentCommand{\H}{O{\bullet}m}{\operatorname{H}^{#1}(#2)}
  
  \newcommand{\PrL}{\operatorname{Pr}^L}
  \newcommand{\PrStL}{\operatorname{Pr}_{\mathrm{St}}^L}
  \newcommand{\PrStLcg}{\operatorname{Cat}_\infty^{\mathrm{perf}}}
  \newcommand{\cats}{\operatorname{Cat}_\infty}
  
  \NewDocumentCommand{\DerCat}{od<>m}{\operatorname{D}\IfValueT{#1}{^\mathrm{#1}}\IfValueT{#2}{_{#2}}(#3)}
  \NewDocumentCommand{\Ch}{od<>md<>}{\operatorname{Ch}\IfValueT{#1}{^\mathrm{#1}}\IfValueT{#2}{_{\mathrm{#2}}}(#3)\IfValueT{#4}{_{\mathrm{#4}}}}
  \NewDocumentCommand{\KCh}{od<>m}{\operatorname{K}\IfValueT{#1}{^\mathrm{#1}}\IfValueT{#2}{_{\mathrm{#2}}}(#3)}

  \NewDocumentCommand{\smashprod}{mm}{{#1}\#{#2}}

  \NewDocumentCommand{\HH}{sO{}m}{\operatorname{HH}\IfBooleanTF{#1}{_{#2}}{^{#2}}(#3)}
  
  \NewDocumentCommand{\Ind}{o}{\operatorname{Ind}\IfValueT{#1}{(#1)}}
  \NewDocumentCommand{\Fun}{d<>omm}{\operatorname{Fun}\IfValueT{#1}{_{#1}}\IfValueT{#2}{^{\mathrm{#2}}}(#3,#4)}
  \NewDocumentCommand{\LFun}{d<>omm}{\operatorname{LFun}\IfValueT{#1}{_{#1}}\IfValueT{#2}{^{\mathrm{#2}}}(#3,#4)}
  \NewDocumentCommand{\Hom}{d<>omm}{\operatorname{Hom}\IfValueT{#1}{_{#1}}\IfValueT{#2}{^{#2}}(#3,#4)}
  \NewDocumentCommand{\Map}{d<>omm}{\operatorname{Map}\IfValueT{#1}{_{#1}}\IfValueT{#2}{^{#2}}(#3,#4)}
  \NewDocumentCommand{\RHom}{d<>omm}{\mathbb{R}\!\operatorname{Hom}\IfValueT{#1}{_{#1}}\IfValueT{#2}{^{#2}}(#3,#4)}
  \RenewDocumentCommand{\hom}{d<>omm}{\operatorname{hom}\IfValueT{#1}{_{#1}}\IfValueT{#2}{^{#2}}(#3,#4)}
  \NewDocumentCommand{\EEnd}{d<>om}{\mathbf{End}\IfValueT{#1}{_{#1}}\IfValueT{#2}{^{#2}}(#3)}
  \NewDocumentCommand{\Ext}{d<>omm}{\operatorname{Ext}\IfValueT{#1}{_{#1}}\IfValueT{#2}{^{#2}}(#3,#4)}
  
  \title{Hopfological algebra, revisited}

  \author[J.~O.~G{\'o}mez]{Juan Omar G{\'o}mez} \address[G{\'o}mez]{Fakultat
    f{\"u}r Mathematik, Universit{\"a}t Bielefeld, 33501 Bielefeld, Germany}
  \email{jgomez@math.uni-bielefeld.de}

  \author[G.~Jasso]{Gustavo Jasso}
  \address[Jasso]{Mathematisches Institut\\Universit{\"a}t zu K{\"o}ln\\ Weyertal 86-90\\
    \\50931 K{\"o}ln, Germany} \email{gjasso@math.uni-koeln.de}

  \author[M.~Nielsen]{Marius Nielsen} \address[Nielsen]{Department of
    Mathematical Sciences\\Norwegian University of Science and
    Technology\\Trondheim, Norway} \email{marius.v.b.nielsen@ntnu.no}

\begin{document}

  \maketitle

\begin{abstract}
  We propose an $\infty$-categorical approach to Khovanov--Qi's Hopfological
  algebra that, in particular, refines several foundational aspects of the
  theory by recasting the previous constructions in terms of $\infty$-categories
  of modules in monoidal $\infty$-categories. This perspective leads to a more
  general variant of Hopfological algebra that takes place over an arbitrary
  rigidly-compactly generated symmetric monoidal stable $\infty$-category, which
  we also outline in the article. In the appendix, we compare the construction
  of Hopfological derived categories to that of Holm--J{\o}rgensen's $Q$-shaped
  derived categories.
\end{abstract}

\setcounter{tocdepth}{1}
\tableofcontents

\section*{Introduction}

In this article, we propose an $\infty$-categorical approach to Hopfological
algebra that, in particular, refines several foundational aspects of the theory
by recasting the previous constructions in terms of $\infty$-categories of
modules in monoidal $\infty$-categories.

Hopfological algebra was introduced by Khovanov and Qi in~\cite{Kho16,Qi14} as a
variant of classical homological algebra that could be used in the
categorification of quantum invariants of $3$-manifolds, see~\cite{QS17} for an
early survey of applications of the theory. The rough idea is the following.
Given a finite-dimensional Hopf algebra $H$ over an arbitrary field, the stable
category $\LModu{H}$ of left $H$-modules admits a triangulated structure by a
well-known theorem of Happel~\cite{Hap88}. The stable category $\LModu{H}$ is a
compactly-generated triangulated category and, moreover, the stable category of
finite-dimensional $H$-modules $\mmodu{H}\subseteq\LModu{H}$ is precisely its
full subcategory of compact objects. So far, these considerations only make use
of the fact that $H$ is a Frobenius algebra.

The Hopf algebra structure of $H$ implies that the stable category $\LModu{H}$
is a monoidal category with respect to the tensor product over the ground field.
The tensor product is exact in each variable separately and it restricts to the
subcategory $\mmodu{H}$. The upshot is that the Grothendieck group
$K_0(\mmodu{H})$ is endowed with a ring structure, and $\mmodu{H}$ is then
regarded as a categorification of its Grothendieck ring. As explained
in~\cite[Section~4.3]{Qi14}, various interesting rings are categorified in this
way.

The next step is to categorify \emph{modules} over the Grothendieck ring
$K_0(\mmodu{H})$. The Hopfological algebra paradigm provides a rich supply of
triangulated categories equipped with a right action of $\mmodu{H}$. For
simplicity, we take as input a left $H$-module algebra $A$, that is an
associative algebra whose multiplication and unit maps are left $H$-module
homomorphisms. The key insight is that the category $\LMod{\smashprod{A}{H}}$ of
left modules over the smash product algebra $\smashprod{A}{H}$ is endowed with a
class of `Hopfological quasi-isomorphisms', which are those maps that are stable
isomorphisms of the underlying $H$-modules. The punchline is then that the
localisation of $\LMod{\smashprod{A}{H}}$ at the class of Hopfological
quasi-isomorphisms yields a compactly-generated triangulated category
$\DerCat{A,H}$, called the Hopfological derived category of $A$, equipped with a
canonical right action of $\LModu{H}$. Consequently, the Grothendieck group of
the full subcategory $\perf{A,H}\subseteq\DerCat{A,H}$ of compact objects
inherits the structure of a right module over the Grothendieck ring
$K_0(\mmodu{H})$. Classical derived categories of differential graded (=DG)
algebras are obtained by letting $H$ be the graded algebra of dual numbers in a
variable of cohomological degree $1$. In this way, the theory of Hopfological
derived categories can be regarded as an extension of the theory of classical
derived categories.

More generally, it is expected that many constructions in classical homological
algebra extend to Hopfological derived categories. Notably, in~\cite{QS22,QS23},
Qi and Sussan introduce $p$-analogues of Hochschild (co)homology for $p$-DG
algebras, where $p$ is an odd prime number, and use these to construct link
invariants that categorify the Jones polynomial. The link to Hopfological
algebra is that $p$-DG algebras are left module algebras for the Hopf algebra
$\kk[\partial]/(\partial^p)$ over a field $\kk$ of characteristic $p$. Other
Hopfological analogues of classical invariants have been also investigated
in~\cite{Far21,OT20a,Oha25a}.

Following the seminal work of Khovanov~\cite{Kho16}, the foundations of
Hopfological algebra were laid out by Qi in~\cite{Qi14} by extending Keller's
seminal work on derived categories of differential graded
algebras~\cite{Kel94}. Afterwards, alternative approaches using Quillen's theory
of model categories were pursued in~\cite{OT20a,Oha24,Oha25}.

In this article, we revisit the foundations of the subject from the perspective
of the theory of $\infty$-categories, which serves as a model for the theory of
$(\infty,1)$-categories. From a technical standpoint, we leverage the robust
theory of monoidal $\infty$-categories, and of algebras and modules therein,
developed extensively by Lurie in~\cite{Lur17}. For this, we work with the
monoidal stable $\infty$-category $\StLMod{H}$ of $H$-modules, which is the
canonical $\infty$-categorical enhancement of the triangulated $\LModu{H}$.
Given a left $H$-module algebra $A$, we propose to define the Hopfological
derived $\infty$-category of $A$ to be the stable $\infty$-category\footnote{In
  the sequel, we reserve the symbol $\DerCat{A,H}$ for the Hopfological derived
  $\infty$-category, and denote its triangulated $1$-categorical variant by
  $\Ho{\DerCat{A,H}}$ instead.}
\[
  \DerCat{A,H}\coloneqq\LMod{A}[\StLMod{H}]
\]
of left $A$-modules internal to the monoidal $\infty$-category $\StLMod{H}$;
with this definition, $\DerCat{A,H}$ is canonically a right $\StLMod{H}$-module
$\infty$-category. As we explain in \Cref{def:Hopfological_derived_category},
the homotopy category of the stable $\infty$-category $\DerCat{A,H}$ is
precisely the triangulated Hopfological derived category of $A$, further
justifying the above definition. From a conceptual point of view, this approach
allows to embed Hopfological algebra into the theory of right
$\StLMod{H}$-module $\infty$-categories in a precise sense. From a practical
point of view, the general theory of $\infty$-categories of modules has many
straightforward, but interesting, consequences in this context. For example,
Hopfological derived $\infty$-categories of algebras with trivial $H$-action
admit a particularly simple description in terms of Lurie's relative tensor
product.

\begin{theorem intro}[\Cref{thm:splitting_formula}]
  Suppose that $H$ acts trivially on $A$. Then, there is an equivalence of
  $\infty$-categories
  \[
    \DerCat{A,H}\simeq\DerCat{\LMod{A}}\otimes_{\DerCat{\Mod{\kk}}}\StLMod{H},
  \]
  where the relative tensor product is taken over the derived $\infty$-category
  of vector spaces.
\end{theorem intro}

The original construction of Hopfological derived categories does not involve
ordinary derived categories nor chain complexes of modules. The following
derived Morita invariance result, which is an immediate consequence of the
tensor-product formula above, is therefore surprising.

\begin{theorem intro}[\Cref{thm:Morita_invariance}]
  Let $A$ and $B$ be a pair of left $H$-module algebras on which $H$ acts
  trivially. Suppose that $X$ is a chain complex of ordinary $B$-$A$-bimodules
  such that the functor
  \[
    X\Lotimes*_A-\colon\DerCat{\LMod{A}}\stackrel{\sim}{\longrightarrow}\DerCat{\LMod{B}}
  \]
  is an equivalence of derived $\infty$-categories. Then, there is an induced
  equivalence of Hopfological derived $\infty$-categories
  \[
    X\Lotimes*_A\colon\DerCat{A,H}\stackrel{\sim}{\longrightarrow}\DerCat{B,H}.
  \]
\end{theorem intro}

The interplay between model categories and
$\infty$-categories, which is central in our work, also permits us to lift
Krause's recollement for the stable $\infty$-category $\StLMod{H}$~\cite{Kra05}
to the Hopfological setting.

\begin{theorem intro}[\Cref{thm:Krauses-recollement-Hopfological}]
  There is a commutative diagram of recollements of compactly-generated stable
  $\infty$-categories
  \[
    \begin{tikzcd}
      \DerCat{A,H}\ar[hookrightarrow]{r}\dar&\LMod{A}[\KCh{\Inj{H}}]\ar{d}\ar{r}\ar[shift
      right=0.5em]{l}\ar[shift
      left=0.5em]{l}&\DerCat{\LMod{\smashprod{A}{H}}}\ar[shift
      right=0.5em,hook]{l}\ar[hook',shift left=0.5em]{l}\dar\\
      \StLMod{H}\ar[hookrightarrow]{r}&\KCh{\Inj{H}}\ar{r}\ar[shift
      right=0.5em]{l}\ar[shift left=0.5em]{l}&\DerCat{\LMod{H}}\ar[shift
      right=0.5em,hook]{l}\ar[hook',shift left=0.5em]{l}
    \end{tikzcd}
  \]
  where the vertical functors are the corresponding forgetful functors and the
  bottom row is given by Krause's recollement~\cite{Kra05}.
\end{theorem intro}

The $\infty$-categorical approach provides a new perspective on the derived
Morita theory for the Hopfological derived $\infty$-category, discussed first
in~\cite{Qi14}, see~\Cref{subsec:Morita_theory} for details. From the point of
view of categorification, in lieu of the lax symmetric monoidal structure on the
algebraic $K$-theory functor~\cite{BGT14}, the right action of the Grothendieck
ring $K_0(\stLmod{H})$ on $K_0(\perf{A,H})$ extends to a right action
\[
  K(\perf{A,H})\otimes K(\stLmod{H})\longrightarrow K(\stLmod{H})
\]
of the full algebraic $K$-theory ring spectrum $K(\stLmod{H})$ on
$K(\perf{A,H})$, see~\Cref{subsec:additive_localising_inv,subsubsec:K-theory}
for details. The $\infty$-categorical formalism also leads to conceptual
definitions of Hopfological analogues of Hochschild (co)homology,
see~\Cref{subsubsec:Hochschild}. Finally, in \Cref{subsec:descent} we explain a
descent property enjoyed by the derived Hopfological $\infty$-category,
which is a special case of the robust descent property of module $\infty$-categories.

The interpretation of Hopfological derived $\infty$-categories as
$\infty$-categories of modules leads to a variant of Hopfological algebra in
which the derived $\infty$-category of the ground field is replaced by an
arbitrary rigidly-compactly generated symmetric stable $\infty$-category; for
example, one can do Hopfological algebra over the sphere spectrum. We outline
this generalisation in \Cref{sec:spectral_Hopfological_algebra}, the results of
which properly contain the results mentioned above. However, we see value in
including both, for the proof techniques used are different: In
\Cref{sec:Hopfological_dercats} we leverage the interplay between the theory of
Quillen model categories and the theory of $\infty$-categories, while in
\Cref{sec:spectral_Hopfological_algebra} we work exclusively with
$\infty$-categories.

\subsection*{Relation to other works}

As mentioned above, the model-categorical approach to Hopfological algebra was
pioneered in~\cite{Far21,OT20a,Oha24,Oha25}. In this article, we make heavy use of
model-categorical techniques. In particular, we construct the Hopfological
derived $\infty$-category of a left $H$-module algebra $A$ as the underlying
$\infty$-category of the model structure constructed by Ohara in~\cite{Oha25}.
Although, in principle, we could work within the $\infty$-categorical framework
from the outset, we have opted to use model categories as this facilitates the
comparison with the previous constructions at the triangulated level. Indeed,
this permits us to use various `rectification' results for $\infty$-categories
of modules from~\cite{Lur17} in order to perform the various necessary
comparisons in a straightforward manner. Combined with work of
Becker~\cite{Bec14}, these rectification results yield an alternative
model-categorical presentation of Hopfological derived ($\infty$-)categories
that, to our knowledge, is not present in the literature,
see~\Cref{rmk:cosing-presentation_Hopfological_derived_cat}.

At a conceptual level, this article is also related to the second-named author's
work~\cite{Jas25} and, indeed, some of our results are of similar nature. The
article~\cite{Jas25} deals not with Hopfological derived categories, but with
$Q$-shaped derived categories in the sense of Holm and J{\o}rgensen~\cite{HJ22}.
The reader that is familiar with both theories would have noticed that there are
certain formal similarities between them, which we attempt to elucidate in the
appendix.

\subsection*{Structure of the article}

In \Cref{sec:preliminaries} we recall the necessary preliminaries; these include
the relationship between (monoidal) model categories and (monoidal)
$\infty$-categories, rectification results for algebras objects and their
modules, stable $\infty$-categories and their relationship with Frobenius exact
categories, and Hovey's (monoidal) abelian model structures. In
\Cref{sec:base_category}, we discuss in some detail the base monoidal
$\infty$-category, namely the stable $\infty$-category of $H$-modules. The
material in these first two sections is well known to experts. Finally, in
\Cref{sec:Hopfological_dercats} we revisit the construction of the Hopfological
derived category from the $\infty$-categorical perspective and discuss some of
the many consequences that can be obtained by applying the general theory of
modules in monoidal $\infty$-categories, as well as our main results. In
\Cref{sec:spectral_Hopfological_algebra}, we outline an approach to Hopfological
algebra over an arbitrary rigidly-compactly generated symmetric stable
$\infty$-category. The comparison with Holm--J{\o}rgensen's $Q$-derived
categories is conducted in the appendix.

\subsection*{Conventions}

We work over an arbitrary field $\kk$ and denote the category of all
$\kk$-vector spaces by $\Mod{\kk}$. If $A$ is a $\kk$-algebra, we denote the
category of all left $A$-modules by $\LMod{A}$; similarly, we denote the
category of all right $A$-modules by $\RMod{A}$. If the algebra $A$ is
commutative, we also write
\[
  \Mod{A}\coloneqq\LMod{A}\cong\RMod{A}.
\]
In~\cite{Lur17}, the notation $\LMod{A}$ is most often used to denote the
derived $\infty$-category of the category of left $A$-modules, which we denote
by $\DerCat{\LMod{A}}$ instead. More generally, given a monoidal category $\M$,
we write $\Alg{\M}$ for the category of (unital, associative) algebra objects in
$\M$. For an algebra object $A\in\Alg{\M}$, we write $\LMod{A}[\M]$ for the
category of left $A$-module objects in $\M$ and $\RMod{A}[\M]$ for the category
of right $A$-module objects in $\M$. We also use these notations and conventions in the
$\infty$-categorical setting. For details on the theory of algebras and modules
in monoidal ($\infty$-)categories, we refer the reader
to~\cite[Chapter~7]{EGNO15} (for the $1$-categorical case) and~\cite[Chapters~3
and~4]{Lur17} or \cite[Chapter~1]{Erg22} (for the $\infty$-categorical case).
Throughout the article, we use freely the theory of $\infty$-categories
developed by Joyal~\cite{Joy,Joy02,Joy08}, Lurie~\cite{Lur09,Lur17,Lur18SAG} and
others; we refer the reader to~\cite{ACam16,Gro20,Jas26} for surveys of the
theory, and to \cite{Cis19,Lan21,Kerodon} for textbook accounts. We also assume
basic familiarity with the theory of Quillen model categories, for which our
main reference is~\cite{Hov99}. In addition, we also assume that the reader is
familiar with the theory of compactly-generated triangulated categories, for
which~\cite{Nee01} is a useful reference. Finally, our main references for the
theory of Hopf algebras and their representations are~\cite{EGNO15,Kas95,Mon93}.

\subsection*{Acknowledgements} 
JOG is supported by the Deutsche Forschungsgemeinschaft (Project-ID 491392403 – TRR 358).

\section{Preliminaries}
\label{sec:preliminaries}

\subsection{Quillen model categories and their underlying $\infty$-categories}
\label{subsec:Quillen-model-cats}

Let $\A$ be a category with all small limits and all small colimits. Recall that
a \emph{(closed) model category structure} on $\A$ is a triple
$(\Cofib,\Weq,\Fib)$ of (in general, non-full) subcategories of $\A$ called
\emph{cofibrations}, \emph{weak equivalences} and \emph{fibrations},
respectively, subject to a number of axioms, see~\cite[Definition~1.1.3]{Hov99}
for the precise definition. For later use, we recall that an object $x\in\A$ is
\emph{cofibrant} if the unique morphism $\emptyset\to x$ is a cofibration;
dually, $x\in\A$ is \emph{fibrant} if the unique morphism $x\to *$ is a
fibration. We denote by $\C$ and $\F$ the classes of cofibrant and of fibrant
objects, respectively. In this article, we are interested in the
\emph{underlying $\infty$-category}~\cite[Definition~1.3.4.1]{Lur17}
\[
  \MLoc{\A}\coloneqq\A[\Weq^{-1}]
\]
of $\A$ which, by definition, is the $\infty$-categorical localisation of $\A$
at the class of weak equivalences. In this case, we may identify
$\Ho{\MLoc{\A}}$ with the $1$-categorical localisation of $\A$ at the class of
weak equivalences~\cite[Remark~7.1.6]{Cis19}, so that we may regard $\MLoc{\A}$
as an `$\infty$-categorical enhancement' of $\Ho{\MLoc{\A}}$.

\begin{remark}
  The $\infty$-category $\MLoc{\A}$ can be equivalently obtained as the
  $\infty$-categorical localisation of any of the following subcategories of
  $\A$ at the class of weak equivalences: the full subcategories of cofibrant
  objects, of fibrant objects and of cofibrant-fibrant objects,
  see~\cite[Remark~1.3.4.16]{Lur17} and~\cite[Theorem~7.5.18]{Cis19}.
  \[
    \begin{tikzcd}
      \MLoc{\C\cap\F}\rar{\sim}\dar{\wr}&\MLoc{\C}\dar{\wr}\\
      \MLoc{\F}\rar{\sim}&\MLoc{\A}
    \end{tikzcd}
  \]
  Recall also that every weak equivalence between cofibrant-fibrant objects is a
  homotopy equivalence in the sense of~\cite[Definition~1.2.4]{Hov99}.
\end{remark}

Under suitable assumptions, the underlying $\infty$-category of a model category
has robust categorical properties.

\begin{theorem}[{\cite[Proposition~1.3.4.22]{Lur17}}]
  \label{thm:MLoc-presentable}
  Let $\A$ be a combinatorial model category; that is, the category $\A$ is
  presentable and the model structure on $\A$ is cofibrantly generated in the
  sense of \cite[Definition~2.1.17]{Hov99}. Then, the underlying
  $\infty$-category $\MLoc{\A}$ is a presentable
  $\infty$-category.\footnote{Details on the theory of presentable
    $\infty$-categories can be found in~\cite[Chapter~5]{Lur09}. In short,
    presentable $\infty$-categories are large $\infty$-categories that are
    `controlled' by small $\infty$-categories (so that certain set-theoretic
    issues can be avoided). In addition, a strong form of the Adjoint Functor
    Theorem is valid in this context, see~\cite[Corollary~5.5.2.9]{Lur09}.}
\end{theorem}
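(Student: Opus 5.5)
The plan is to reduce to simplicial model categories via Dugger's presentation theorem, and then invoke the identification of the underlying $\infty$-category of a combinatorial simplicial model category with the homotopy coherent nerve of its cofibrant-fibrant objects. Concretely, I would proceed in three steps.

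\emph{Step 1: reduction to the simplicial case.} By Dugger's theorem, every combinatorial model category $\A$ is Quillen equivalent to a combinatorial \emph{simplicial} model category. A natural choice is a left Bousfield localisation $\A'$ of a projective model structure on simplicial presheaves $\Fun{\K^{\op}}{\mathrm{sSet}}$, where $\K$ is a small category (for instance, a suitable subcategory of $\kappa$-compact cofibrant objects of $\A$).

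\emph{Step 2: invariance under Quillen equivalence.} A Quillen equivalence $F\colon\A'\rightleftarrows\A\noloc G$ induces an equivalence $\MLoc{\A'}\simeq\MLoc{\A}$. I would check this by restricting $F$ to cofibrant objects, where it preserves weak equivalences. Using the preceding remark, $\MLoc{\C}\simeq\MLoc{\A}$, so $F$ induces a functor of localisations. The derived unit and counit, built from cofibrant and fibrant replacements, then give an inverse up to natural equivalence. Hence it suffices to prove presentability of $\MLoc{\A'}$.

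\emph{Step 3: the simplicial case.} For a combinatorial simplicial model category, $\MLoc{\A'}$ is equivalent to the homotopy coherent nerve $N(\A'^{\circ})$ of the simplicial category of cofibrant-fibrant objects. This $\infty$-category is presentable by \cite[Proposition~A.3.7.6]{Lur09}. Alternatively, for the specific model above, $\MLoc{\A'}$ is an accessible left localisation of the presheaf $\infty$-category $\Fun{N(\K)^{\op}}{\Spaces}$. The localisation is at the small set $S$ of maps defining the Bousfield localisation, and any accessible localisation of a presheaf $\infty$-category is presentable by \cite[Chapter~5]{Lur09}.

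I expect the main obstacle to be the comparison between the formal localisation $\A'[\Weq^{-1}]$ and the homotopy coherent nerve $N(\A'^{\circ})$. This requires knowing that the mapping spaces in the localisation are computed by the derived simplicial mapping spaces, e.g.\ via Dwyer--Kan hammock localisation, and I would cite this rather than reprove it. Dugger's theorem itself I would simply quote. Everything else is routine bookkeeping with cofibrant and fibrant replacements.
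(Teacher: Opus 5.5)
Your proposal is correct and is essentially the argument behind \cite[Proposition~1.3.4.22]{Lur17}, which the paper cites without reproving. That argument reduces via Dugger's presentation theorem to a combinatorial simplicial model category, uses that Quillen equivalences induce equivalences of underlying $\infty$-categories, identifies the localisation with the homotopy coherent nerve of the cofibrant-fibrant objects, and concludes with \cite[Proposition~A.3.7.6]{Lur09}. The one step you compress, the derived unit and counit in Step~2, goes through as you indicate: with functorial factorisations they are zigzags of natural weak equivalences between functors preserving weak equivalences, so they induce natural equivalences on the localisations.
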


The following theorem is our main tool for constructing the monoidal
$\infty$-categories of interest in this article.

\begin{theorem}[Lurie]
  \label{thm:MLoc-presentably-monoidal}
  Let $\M=(\M,\unit,\otimes)$ be a monoidal category. Suppose that $\M$ is
  endowed with a combinatorial model structure that is monoidal in the sense
  of~\cite[Definition~4.2.6]{Hov99}. The following statements hold:
  \begin{enumerate}
  \item\cite[Example~4.1.7.6, Lemma~4.1.8.8]{Lur17} The presentable
    $\infty$-category $\MLoc{\M}$ inherits the structure of a monoidal
    $\infty$-category
    \begin{center}
      $(\MLoc{\M},\unit,\Lotimes*),$
    \end{center}
    in the sense of~\cite[Definition~4.1.1.10]{Lur17}, which is symmetric if the
    monoidal structure on $\M$ is symmetric. Moreover, the localisation functor
    \[
      \M\longrightarrow\MLoc{\M}
    \]
    is monoidal and is universal among all monoidal functors
    out of $\M$ that invert the morphisms in $\Weq$.
  \item\cite[Lemma~4.1.8.8]{Lur17} The tensor product functor
    \begin{center}
      $-\Lotimes-\colon\MLoc{\M}\times\MLoc{\M}\longrightarrow\MLoc{\M}$
    \end{center}
    preserves colimits in each variable separately.
  \item\cite[Corollary~5.5.2.9]{Lur09} The monoidal structure on $\MLoc{\M}$ is
    biclosed.
  \end{enumerate}
\end{theorem}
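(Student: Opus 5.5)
The plan is to assemble the three statements from the cited results of Lurie, checking their hypotheses in turn.

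For (1), I would first restrict to the full subcategory $\M^c\subseteq\M$ of cofibrant objects. By the pushout-product axiom of~\cite[Definition~4.2.6]{Hov99}, $\M^c$ is closed under $\otimes$. By Ken Brown's lemma, $x\otimes-$ and $-\otimes x$ preserve weak equivalences between cofibrant objects whenever $x$ is cofibrant. If the unit $\unit$ is not cofibrant, the unit axiom ensures that a cofibrant replacement $Q\unit\to\unit$ still acts as a unit up to weak equivalence. This is exactly the input needed for~\cite[Example~4.1.7.6]{Lur17}. That result says a monoidal category with a class of weak equivalences compatible with the tensor product has a localisation that inherits a monoidal structure. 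The localisation functor is then monoidal and universal among monoidal functors inverting $\Weq$, and it is symmetric when $\M$ is. Combined with the remark identifying $\MLoc{\M^c}\simeq\MLoc{\M}$, this gives the monoidal structure on $\MLoc{\M}$. Its tensor product $\Lotimes*$ is computed on cofibrant objects by $\otimes$. The monoidal functor out of $\M$ itself is obtained from $\M^c\to\MLoc{\M}$ via cofibrant replacement, as in~\cite[Lemma~4.1.8.8]{Lur17}. The main obstacle is the treatment of a non-cofibrant unit, together with extending universality from $\M^c$ to all of $\M$. For both I would rely on the precise statement of~\cite[Lemma~4.1.8.8]{Lur17} rather than reprove them.

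For (2), I would argue with homotopy colimits. Since the model structure is combinatorial, $\MLoc{\M}$ is presentable by \Cref{thm:MLoc-presentable}. Colimits in $\MLoc{\M}$ are computed by homotopy colimits in $\M$, and these can be modelled by colimits of cofibrant diagrams in the projective model structure. For cofibrant $x$, the functor $x\otimes-$ is left Quillen: the pushout-product axiom applied to $\emptyset\to x$ shows that it preserves cofibrations and trivial cofibrations. A left Quillen functor induces a colimit-preserving functor on underlying $\infty$-categories, and this functor is $x\Lotimes-$. The same argument applies in the other variable. This is the content of~\cite[Lemma~4.1.8.8]{Lur17}.

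For (3), fix $x\in\MLoc{\M}$. The functors $x\Lotimes-$ and $-\Lotimes x$ are colimit-preserving functors between presentable $\infty$-categories by (2). The adjoint functor theorem~\cite[Corollary~5.5.2.9]{Lur09} then supplies right adjoints, which are the internal homs on either side. Hence the monoidal structure is biclosed.
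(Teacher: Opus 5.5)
Your route is the paper's: there the statement is established purely by citing \cite[Example~4.1.7.6, Lemma~4.1.8.8]{Lur17} and the adjoint functor theorem \cite[Corollary~5.5.2.9]{Lur09}. Your sketches of (2), via the left Quillen functors $x\otimes-$ for cofibrant $x$, and of (3) are fine.

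The gap is in (1). You try to get a monoidal functor out of all of $\M$ by precomposing $\M^c\to\MLoc{\M}$ with a cofibrant replacement $Q$. That step fails: it only yields a lax monoidal functor. The comparison map $Qx\otimes Qy\to x\otimes y$, that is $x\Lotimes* y\to x\otimes y$, need not be a weak equivalence when $x$ or $y$ is not cofibrant.

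In fact, no monoidal structure on $\MLoc{\M}$ can make $\M\to\MLoc{\M}$ monoidal in general. Since weak equivalences are saturated, this would force $x\otimes-$ to preserve all weak equivalences. Take the projective model structure on $\Ch{\Mod{\ZZ}}$ and tensor the quasi-isomorphism $(\ZZ\xrightarrow{2}\ZZ)\to\ZZ/2$ with $\ZZ/2$: the result is $(\ZZ/2\xrightarrow{0}\ZZ/2)\to\ZZ/2$, which is not a quasi-isomorphism.

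What \cite[Example~4.1.7.6]{Lur17} actually gives is this: $\M^c\to\MLoc{\M^c}\simeq\MLoc{\M}$ is monoidal, and it is universal among monoidal functors out of $\M^c$ that invert weak equivalences. The clause about $\M$ itself is correct only when every object is cofibrant. That is the situation for every model structure the paper actually uses, so you should restrict the claim to $\M^c$ rather than try to extend it.

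A second problem concerns the unit. Lurie's notion of monoidal model category, on which both cited results rest, requires $\unit$ to be cofibrant, so that $\M^c$ is an honest monoidal category. Under Hovey's weaker unit axiom, $\M^c$ is only non-unital monoidal. Your remark about $Q\unit$ is therefore not ``exactly the input needed'' for Example~4.1.7.6. Deferring to Lemma~4.1.8.8 does not help either, since it makes the same assumption.

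You would need to do one of two things:
\begin{enumerate}
\item add the hypothesis that $\unit$ is cofibrant, which is harmless for the paper's applications; or
\item give an additional argument that promotes the non-unital monoidal structure on $\MLoc{\M^c}$ to a unital one, using $Q\unit$ as a quasi-unit. Hovey's unit axiom supplies exactly the equivalences this requires, but it is an argument, not a citation.
\end{enumerate}
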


Below we recall several facts about monoidal categories, algebra objects and
their modules that are used later in the article. We begin by recalling a
convenient definition.

\begin{definition}
  A \emph{presentably (symmetric) monoidal $\infty$-category} is a presentable
  $\infty$-category $\M$ equipped with a (symmetric) monoidal structure whose
  tensor product functor
  \[
    -\otimes-\colon\M\times\M\longrightarrow\M
  \]
  preserves colimits in each variable separately.\footnote{This means that $\M$
    can be regarded as an algebra object in the $\infty$-category $\PrL$ of
    presentable $\infty$-categories and colimit-preserving functors, endowed
    with Lurie's tensor product~\cite[Proposition~4.8.1.15]{Lur17}.}
\end{definition}

\begin{remark}
  In the context of \Cref{thm:MLoc-presentably-monoidal}, the underlying
  $\infty$-category $\MLoc{\M}$ is a presentably monoidal $\infty$-category. For
  a suitable converse to \Cref{thm:MLoc-presentably-monoidal}, see~\cite{NS17a}.
\end{remark}

\begin{notation}[{\cite[Construction~5.2.5.18]{Lur17}}]
  \label{not:reversed_monoidal_cat}
  Let $\M$ be a monoidal $\infty$-category. We denote the monoidal
  $\infty$-category obtained from $\M$ by reversing the monoidal product (but
  not the direction of the morphisms) by $\M^\rev$. If the monoidal
  $\infty$-category $\M$ refines to a symmetric monoidal $\infty$-category, then there is an identification
  $\M\simeq\M^\rev$.
\end{notation}

\begin{remark}
  Let $\M$ be a monoidal model category. Then, $\M^\rev$ is also a monoidal
  model category (with identical model structure) and there is a canonical
  identification
  \[
    \MLoc{\M^\rev}\simeq\MLoc{\M}^\rev.
  \]
\end{remark}

Our $\infty$-categorical approach to Hopfological algebra relies heavily on the
construction of $\infty$-categories of modules internal to monoidal
$\infty$-categories. The first result that we need is the following
rectification result for associative algebra objects.

\begin{theorem}[{\cite{SS00,Lur17}}]
  \label{thm:SS-transfer-model_structure}
  Let $\M$ be a combinatorial monoidal model category such that every object is
  cofibrant. The following statements hold:
  \begin{enumerate}
  \item\cite{SS00}\cite[Proposition~4.1.8.3]{Lur17} The category $\Alg{\M}$ of
    algebra objects in $\M$ admits a (right-transferred) combinatorial model
    structure determined as follows:
    \begin{itemize}
    \item A morphism in $\Alg{\M}$ is a weak equivalence if its underlying
      morphism is a weak equivalence in $\M$.
    \item A morphism in $\Alg{\M}$ is a fibration if its underlying morphism is
      a fibration in $\M$.
    \item A morphism in $\Alg{\M}$ is a cofibration if it has the left lifting
      property with respect to the trivial fibrations.
    \end{itemize}
    Moreover, the forgetful functor $\Alg{\M}\to\M$ is a right Quillen functor.
  \item\cite[Theorem~4.1.8.4]{Lur17} There is a canonical equivalence of
    $\infty$-categories
    \begin{center}
      $\MLoc{\Alg{\M}}\stackrel{\sim}{\longrightarrow}\Alg{\MLoc{\M}}.$
    \end{center}
  \end{enumerate}
\end{theorem}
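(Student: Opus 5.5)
We prove the two parts separately, following Schwede--Shipley and Lurie.

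For part (1), we transfer the model structure along the free--forgetful adjunction
\[
  T\colon\M\rightleftarrows\Alg{\M}\noloc U,\qquad T(x)=\coprod_{n\ge0}x^{\otimes n}.
\]
Since $\M$ is presentable and monoidal closed, $\Alg{\M}$ is presentable and $U$ preserves filtered colimits. If $I$ and $J$ are generating (trivial) cofibrations for $\M$, the candidate generators are $T(I)$ and $T(J)$. By the standard transfer lemma (Kan's criterion, as in~\cite[Lemma~2.3]{SS00}) it then suffices to verify one condition: every relative $T(J)$-cell complex in $\Alg{\M}$ is a weak equivalence.

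The main obstacle is the pushout analysis of algebras. For a pushout $A\to A\amalg_{T(x)}T(y)$ along $T(j)$ with $j\colon x\to y$ in $J$, we filter the underlying object of the pushout, following~\cite[Section~6]{SS00}. The successive stages are pushouts in $\M$ along maps built from iterated tensor products $A\otimes(\text{pushout-product powers of } j)\otimes A\otimes\cdots$. Since every object of $\M$ is cofibrant, tensoring with $A$ preserves trivial cofibrations by the pushout-product axiom. So each filtration step is a trivial cofibration in $\M$, and transfinite composites and pushouts of trivial cofibrations remain trivial cofibrations. This is exactly where the hypothesis that all objects are cofibrant replaces the monoid axiom: the pushout-product axiom applied to $A\otimes-$ suffices. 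Once this condition holds, fibrations and weak equivalences are created by $U$, and $U$ is right Quillen by construction.

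For part (2), we first use \Cref{thm:MLoc-presentably-monoidal}: the localisation $\M\to\MLoc{\M}$ is monoidal, so it induces a functor $\Alg{\M}\to\Alg{\MLoc{\M}}$. This functor inverts weak equivalences and hence factors through $\MLoc{\Alg{\M}}$. Both sides are presentable, and both forgetful functors to $\MLoc{\M}$ are conservative and preserve sifted colimits. For the target this holds because $\otimes$ preserves colimits in each variable; for the source it follows from the cell analysis above together with the fact that $U$ preserves filtered colimits and geometric realisations up to homotopy. Both forgetful functors also admit left adjoints, and the comparison commutes with them: the free algebra $\coprod x^{\otimes n}$ is already homotopically correct because all objects are cofibrant, so derived and underived tensor powers agree. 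The Barr--Beck--Lurie theorem then identifies both sides with modules over the same monad on $\MLoc{\M}$, which gives the equivalence. The delicate point in this step is checking that $U$ preserves geometric realisations of simplicial objects up to weak equivalence. We handle it with the same filtration argument applied to bar constructions, as in~\cite[Theorem~4.1.8.4]{Lur17}.
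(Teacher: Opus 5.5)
Your proposal is correct and follows the arguments the paper cites. Part~(1) is the Schwede--Shipley transfer along the free--forgetful adjunction: cofibrancy of all objects makes each stage of the pushout filtration a pushout along an iterated pushout-product of copies of $\emptyset\to A$ with the trivial cofibration $j$, so the monoid axiom holds automatically. Part~(2) is Lurie's Barr--Beck--Lurie comparison of the two monads on $\MLoc{\M}$, checked on free algebras.

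The one step you assert rather than prove is that $\MLoc{\Alg{\M}}\to\MLoc{\M}$ preserves sifted colimits; as stated, your justification for it is circular. You defer it to \cite[Theorem~4.1.8.4]{Lur17}, which is where the paper sends the whole statement. If you want to make it explicit, do a cell induction on projectively cofibrant sifted diagrams of algebras. Use the same filtration, together with cofinality of the diagonal $K\to K^{n}$ for sifted $K$, to identify the homotopy colimits of the filtration stages. Bar constructions play no specific role in this.
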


\begin{notation}[{\cite[Construction~5.2.5.18]{Lur17}}]
  Let $\M$ be a monoidal $\infty$-category. The passage $\M\mapsto\M^\rev$ is
  compatible with the formation of algebra objects: There is an induced
  equivalence of $\infty$-categories
  \[
    \rev\colon\Alg{\M}\stackrel{\sim}{\longrightarrow}\Alg{\M^\rev},\qquad
    A\longmapsto A^\rev,
  \]
  where $A^\rev=A$ as objects of $\M$. Thus, if the monoidal $\infty$-category
  $\M$ admits a symmetric refinement, we obtain an equivalence of $\infty$-categories
  \[
    \rev\colon\Alg{\M}\stackrel{\sim}{\longrightarrow}\Alg{\M},\qquad
    A\longmapsto A^\rev.
  \]
\end{notation}

We also need the following rectification result for (bi)module objects.

\begin{theorem}[Lurie]
  \label{thm:Lurie-transfer-model_structure}
  Let $\M$ be a combinatorial monoidal model category and $A\in\Alg{\M}$ an
  algebra object whose underlying object is cofibrant in $\M$. The following statements
  hold:
  \begin{enumerate}
  \item\cite[Theorem~4.3.3.15]{Lur17} The category $\LMod{A}[\M]$ of left
    $A$-module objects in $\M$ admits a (right-transferred) combinatorial model
    structure determined as follows:
    \begin{itemize}
    \item A morphism in $\LMod{A}[\M]$ is a weak equivalence if its underlying
      morphism is a weak equivalence in $\M$.
    \item A morphism in $\LMod{A}[\M]$ is a fibration if its underlying morphism
      is a fibration in $\M$.
    \item A morphism in $\LMod{A}[\M]$ is a cofibration if it has the left
      lifting property with respect to the trivial fibrations.
    \end{itemize}
    Moreover, the forgetful functor $\LMod{A}[\M]\to\M$ is both a left Quillen
    functor and a right Quillen functor (in particular, it admits a left and a
    right adjoint).
  \item\cite[Theorem~4.3.3.17]{Lur17} There is a canonical equivalence of
    $\infty$-categories
    \begin{center}
      $\MLoc{\LMod{A}[\M]}\stackrel{\sim}{\longrightarrow}\LMod{A}[\MLoc{\M}],$
    \end{center}
    where the right-hand side denotes the $\infty$-category of left $A$-module
    objects in the monoidal $\infty$-category of $\MLoc{\M}$.
  \end{enumerate}
  Entirely analogous statements hold for the category $\Bimod{B}{A}[\M]$ of
  $B$-$A$-bimodule objects in $\M$, over a pair $A,B\in\Alg{\M}$ of algebra
  objects whose underlying objects are cofibrant.
\end{theorem}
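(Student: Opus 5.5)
The statement is a compilation of results of Lurie, so my plan is to reduce each assertion to the cited results in \cite{Lur17} and check that their hypotheses hold. The first step is to identify $\LMod{A}[\M]$ with the category of algebras over the monad $T=A\otimes-$ on $\M$. Since $\M$ is presentable and $\otimes$ preserves colimits in each variable, $T$ preserves colimits. Hence $\LMod{A}[\M]$ is presentable, and the forgetful functor $U\colon\LMod{A}[\M]\to\M$ preserves all limits and colimits. By the adjoint functor theorem, $U$ therefore has both a left adjoint $F=A\otimes-$ and a right adjoint $G=\underline{\Hom}(A,-)$; the latter uses that a monoidal model category is closed.

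For part (1), I would apply Kan's transfer theorem \cite[Theorem~11.3.2]{Hir03}-style to the adjunction $F\dashv U$, using the generating cofibrations $F(I)$ and trivial cofibrations $F(J)$. The smallness conditions are automatic because everything is presentable, and they give a combinatorial structure. The only substantive condition is that relative $F(J)$-cell complexes are weak equivalences. Since $U$ preserves colimits, the underlying map of a pushout of $F(j)$ along $F(X)\to M$ is the pushout in $\M$ of $A\otimes j$. By the pushout-product axiom together with the cofibrancy of $A$, the map $A\otimes j$ is a trivial cofibration in $\M$, so pushouts and transfinite composites of such maps are again trivial cofibrations. This is exactly where the hypothesis that $A$ is cofibrant enters, and I expect it to be the main point to verify; this is what \cite[Theorem~4.3.3.15]{Lur17} does.

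$U$ is right Quillen by construction. To see that it is also left Quillen, I would show that $U$ preserves cofibrations and trivial cofibrations. As a left adjoint that preserves colimits, it suffices to check this on the generators $F(i)=A\otimes i$, which are (trivial) cofibrations by the pushout-product argument above. Equivalently, $G$ preserves fibrations and trivial fibrations, by adjunction with the pushout-product axiom.

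For part (2), I would invoke \cite[Theorem~4.3.3.17]{Lur17}. There is a comparison functor $\MLoc{\LMod{A}[\M]}\to\LMod{A}[\MLoc{\M}]$ over $\MLoc{\M}$, built from the lax monoidal localisation of \Cref{thm:MLoc-presentably-monoidal}. Both sides are monadic over $\MLoc{\M}$: the left via the derived adjunction $\mathbb{L}F\dashv\mathbb{R}U$, where $U$ is conservative on weak equivalences and preserves sifted homotopy colimits because it is left Quillen; the right by Lurie's Barr--Beck theorem. The comparison therefore reduces to checking that the two monads agree on underlying objects. This holds because the derived free module on $X$ is $A\otimes X^{c}$, which computes $A\Lotimes* X$ since $A$ is cofibrant. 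The bimodule case follows by the same argument applied to the monad $B\otimes-\otimes A$, or equivalently by viewing bimodules as left modules over $B\otimes A^{\rev}$ in the monoidal model category underlying $\M\times\M^{\rev}$-actions, again as in \cite[Section~4.3.3]{Lur17}.
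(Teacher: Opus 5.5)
Your proposal is correct and follows essentially the same route as the paper. The paper proves nothing new here and defers to \cite[4.3.3.15 and~4.3.3.17]{Lur17}; its proof of the monad generalisation (\Cref{thm:Lurie-transfer-model_structure-monad}) sketches your steps: right transfer along the free--forgetful adjunction, the two-sided Quillen property of the forgetful functor, and Barr--Beck--Lurie monadicity for the comparison with $\LMod{A}[\MLoc{\M}]$.

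The one detail worth tightening is how you build the comparison functor. Lurie constructs it on cofibrant modules, whose underlying objects are cofibrant because the forgetful functor is left Quillen. That way only the monoidal localisation of the cofibrant objects of $\M$ is needed, not a lax monoidal structure on the localisation of all of $\M$.
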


\begin{remark}[{\cite[Construction~4.6.3.1, Remark~4.6.3.2]{Lur17}}]
  Let $\M$ be a monoidal $\infty$-category and $A\in\Alg{\M}$ an algebra object.
  There is a canonical equivalence of $\infty$-categories
  \[
    \rev\colon\LMod{A}[\M]\stackrel{\sim}{\longrightarrow}\RMod{A^\rev}[\M^\rev],\qquad
    M\longmapsto M^\rev,
  \]
  where $M^\rev=M$ as objects of $\M$. More generally, for a pair of algebra
  objects $A,B\in\Alg{\M}$, there is a canonical equivalence of
  $\infty$-categories
  \[
    \rev\colon\Bimod{B}{A}[\M]\stackrel{\sim}{\longrightarrow}\Bimod{A^\rev}{B^\rev}[\M^\rev],\qquad
    M\longmapsto M^\rev.
  \]
  Thus, if the monoidal $\infty$-category $\M$ admits a symmetric refinement, we obtain
  equivalences of $\infty$-categories
  \[
    \rev\colon\LMod{A}[\M]\stackrel{\sim}{\longrightarrow}\RMod{A^\rev}[\M],\qquad
    M\longmapsto M^\rev,
  \]
  and
  \[
    \rev\colon\Bimod{B}{A}[\M]\stackrel{\sim}{\longrightarrow}\Bimod{A^\rev}{B^\rev}[\M],\qquad
    M\longmapsto M^\rev.
  \]
\end{remark}

If the base monoidal $\infty$-category has sufficient (co)limits, then we have
access to the familiar change-of-algebras adjoint triple.

\begin{proposition}[{\cite[Corollary~4.3.3.10]{Lur17}}]
  \label{prop:change_of_algebra}
  Let $\M$ be a presentably monoidal $\infty$-category. Given algebra objects
  $A,B,C\in\Alg{\M}$ and a morphism of algebra objects ${f\colon A\to B}$, there
  is an adjoint triple
  \[
    \begin{tikzcd}
      \Bimod{C}{B}[\M]\ar{r}[description]{f^*}&\Bimod{C}{A}[\M]\lar[shift
      left=0.5em]{f_*}\lar[shift right=0.5em,swap]{f_!}.
    \end{tikzcd}
  \]
  At the level of objects, the functor $f_!$ is given by the formula $f_!\colon
  X\mapsto X\otimes_B A$, where $\otimes_B$ denotes the relative tensor product
  over $B$, see~\cite[Section~4.4]{Lur17}.
\end{proposition}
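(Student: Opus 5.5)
The plan is to construct the three functors directly and then obtain the two adjunctions from presentability and the adjoint functor theorem.

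\textbf{The restriction functor.} Given $f\colon A\to B$ in $\Alg{\M}$, restriction of scalars $f^*\colon\Bimod{C}{B}[\M]\to\Bimod{C}{A}[\M]$ is obtained by precomposing the right $B$-action with $f$. In Lurie's formalism this is pullback along the map of $\infty$-operads induced by $f$ on the bimodule operad. By construction $f^*$ commutes with the forgetful functors to $\M$.

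\textbf{Properties of the forgetful functor.} Since $\M$ is presentably monoidal, I will use that $\Bimod{C}{B}[\M]$ is presentable, and that the forgetful functor to $\M$ has three properties:
\begin{itemize}
\item it is conservative;
\item it preserves all small limits;
\item it preserves all small colimits.
\end{itemize}
The colimit statement is where the hypothesis that $\otimes$ preserves colimits separately in each variable is used: colimits of bimodules are computed underlying because the action maps $C\otimes X\otimes B\to X$ are compatible with colimits in $X$. Consequently $f^*$ preserves all small limits and colimits, since these are detected underlying in $\M$.

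\textbf{The adjoints.} Because both bimodule $\infty$-categories are presentable, the adjoint functor theorem~\cite[Corollary~5.5.2.9]{Lur09} yields a left adjoint $f_!$ and a right adjoint $f_*$. This gives the adjoint triple.

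\textbf{The formula for $f_!$.} Regard $B$ as an $A$-$B$-bimodule via $f$. Then for $X\in\Bimod{C}{A}[\M]$ the relative tensor product gives $X\otimes_A B\in\Bimod{C}{B}[\M]$; this is the paper's $X\otimes_B A$ with the roles of $A$ and $B$ named appropriately. It is computed as the geometric realisation of the bar construction $X\otimes A^{\otimes\bullet}\otimes B$, which exists because $\M$ has colimits and $\otimes$ preserves them.

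To identify this with $f_!$, I will show there is a natural equivalence
\[
\Map{X\otimes_A B}{Y}\simeq\Map{X}{f^*Y}
\]
for $Y\in\Bimod{C}{B}[\M]$. The plan is to use Lurie's result that $-\otimes_A B$ is left adjoint to restriction, or equivalently the general statement that $\Map<\Bimod{C}{B}>{X\otimes_A M}{Y}$ is computed by $M$-linear maps~\cite[Section~4.3]{Lur17}. Concretely, one reduces to free bimodules $C\otimes V\otimes A$, for which $(C\otimes V\otimes A)\otimes_A B\simeq C\otimes V\otimes B$, the free $C$-$B$-bimodule. This matches the left adjoint, since the free functors compose: $f^*$ commutes with the forgetful functors, so their left adjoints agree on frees. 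One then extends to all $X$ because every bimodule is a geometric realisation of frees (the bar resolution), and both functors preserve colimits.

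\textbf{Main obstacle.} The main obstacle is the identification of $f_!$ with $X\mapsto X\otimes_A B$ coherently, i.e.\ as functors rather than objectwise. I would handle it by citing the base-change adjunction for relative tensor products in~\cite[Section~4.6.2]{Lur17} rather than redoing the free-resolution argument by hand.
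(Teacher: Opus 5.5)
Your argument is correct and matches the standard proof of the result the paper cites without proof, \cite[Corollary~4.3.3.10]{Lur17}: the forgetful functors are conservative and preserve small limits and colimits, so $f^*$ preserves both and the adjoint functor theorem gives $f_!$ and $f_*$; you defer the formula for $f_!$ to Lurie's relative tensor product just as the paper does, and your reading of that formula as $X\mapsto X\otimes_A B$ is the correct one. One small caveat: even the objectwise identification in your free-resolution sketch needs a natural comparison map, for instance the one adjoint to $X\simeq X\otimes_A A\to f^*(X\otimes_A B)$, but your fallback to Lurie's base-change adjunction covers this.
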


We also remind the reader that lax monoidal functors preserve algebra objects
and their modules. The precise statement is given below; for further details, we 
refer the reader to {\cite[Section~2.1]{Lur17}} and {\cite[Remark~1.1.11]{Erg22}}, 
as well as the references therein.

\begin{proposition}[Lurie]
  \label{prop:monoidal_functors_preserve_algs_mods}
  Let $F\colon\M\to\N$ be a lax monoidal functor between monoidal
  $\infty$-categories. The following statements hold:
  \begin{enumerate}
  \item The functor $F$ induces a functor
    \begin{center}
      $F\colon\Alg{\M}\longrightarrow\Alg{\N},\qquad A\longmapsto FA,$
    \end{center}
    between the corresponding $\infty$-categories of algebra objects.
  \item If the monoidal structures on $\M$ and $\N$ admit symmetric refinements and the
    functor $F$ is lax symmetric monoidal, then it induces a functor
    \begin{center}
      $F\colon\CAlg{\M}\longrightarrow\CAlg{\N},\qquad A\longmapsto FA,$
    \end{center}
    between the corresponding $\infty$-categories of commutative algebra
    objects.\footnote{By a commutative algebra object, we mean an algebra object
      over the commutative $\infty$-operad~\cite[Definition~2.1.1.18]{Lur17}.}
  \item The functor $F$ induces a functor
    \begin{center}
      $\RMod{A}[\M]\longrightarrow\RMod{FA}[\N],\qquad M\longmapsto FM,$
    \end{center}
    between the corresponding $\infty$-categories of internal right modules.
    Analogous statements hold for $\infty$-categories of left modules and of
    bimodules.
  \end{enumerate}
\end{proposition}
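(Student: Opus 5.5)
The plan is to work with the $\infty$-operadic description of algebras and modules in \cite[Chapters~2--4]{Lur17}. A lax monoidal functor $F\colon\M\to\N$ is, by definition, a map of $\infty$-operads $F^\otimes\colon\M^\otimes\to\N^\otimes$ over the associative $\infty$-operad $\mathrm{Assoc}^\otimes$: a functor over $\mathrm{Assoc}^\otimes$ that preserves inert morphisms, but not necessarily cocartesian morphisms over active ones.

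For (1), recall that $\Alg{\M}$ is the $\infty$-category of $\infty$-operad maps (sections) $\mathrm{Assoc}^\otimes\to\M^\otimes$, that is, functors over $\mathrm{Assoc}^\otimes$ sending inert morphisms to inert morphisms. Postcomposition with $F^\otimes$ preserves this condition, since compositions of inert-preserving maps are inert-preserving. It therefore gives a functor $\Alg{\M}\to\Alg{\N}$. Evaluating at the object $\langle 1\rangle$ shows that the underlying object of the image is $FA$.

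For (2), I would run the same argument over the commutative $\infty$-operad $\mathrm{Comm}^\otimes$. A lax symmetric monoidal functor is a map of $\infty$-operads $\M^\otimes\to\N^\otimes$ over $\mathrm{Comm}^\otimes$. Moreover, $\CAlg{\M}$ is the $\infty$-category of $\infty$-operad sections of $\M^\otimes\to\mathrm{Comm}^\otimes$ (cf.~\cite[Definition~2.1.3.1]{Lur17}). Postcomposition again gives the functor.

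For (3), I would use the $\infty$-operad $\mathcal{RM}^\otimes$ (respectively $\mathcal{LM}^\otimes$, $\mathcal{BM}^\otimes$) from \cite[Section~4.2]{Lur17}. Regard $\M$ as right-tensored over itself, so that $\M^\otimes$ yields an $\mathcal{RM}$-operad $\M^\otimes_{\mathcal{RM}}$ whose two colours are both $\M$. By functoriality of this construction, $F$ induces a map of $\mathcal{RM}$-operads $\M^\otimes_{\mathcal{RM}}\to\N^\otimes_{\mathcal{RM}}$. The $\infty$-category of pairs $(A,M)$ is $\mathrm{Alg}_{\mathcal{RM}}(\M)$, and postcomposition gives a functor $\mathrm{Alg}_{\mathcal{RM}}(\M)\to\mathrm{Alg}_{\mathcal{RM}}(\N)$. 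This functor lies over the functor $\Alg{\M}\to\Alg{\N}$ from part (1), because restriction along $\mathrm{Assoc}^\otimes\hookrightarrow\mathcal{RM}^\otimes$ commutes with postcomposition. Taking the fibre over $A$ on the source, which lands in the fibre over $FA$ on the target, gives the functor $\RMod{A}[\M]\to\RMod{FA}[\N]$. The left-module and bimodule cases are identical, using $\mathcal{LM}^\otimes$ and $\mathcal{BM}^\otimes$.

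The main obstacle is the step in (3) of promoting $F^\otimes$ to a map of $\mathcal{RM}$-operads. I would handle it by noting that the tensored structure is obtained from $\M^\otimes$ by pullback along a fixed map of $\infty$-operads $\mathcal{RM}^\otimes\to\mathrm{Assoc}^\otimes$ (\cite[Section~4.2.1]{Lur17}; cf.~\cite[Remark~1.1.11]{Erg22}). Pulling $F^\otimes$ back along this map then yields the required map of $\mathcal{RM}$-operads.
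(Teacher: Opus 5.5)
Your argument is correct, and it is essentially the approach the paper relies on. The paper gives no proof of its own: it defers to \cite[Section~2.1]{Lur17} for parts (1) and (2), which is exactly postcomposition of operad sections with the operad map $F^\otimes$. For part (3) it cites \cite[Remark~1.1.11]{Erg22}, and your pullback of $F^\otimes$ along $\mathcal{RM}^\otimes\to\mathrm{Assoc}^\otimes$ (and its $\mathcal{LM}$ and $\mathcal{BM}$ analogues), followed by passing to fibres over $A$ and $FA$, is the standard argument behind that reference.
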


We also need the following general facts about module $\infty$-categories.

\begin{theorem}[Lurie]
  \label{thm:props_of_Mod}
  Let $\M$ be a presentably monoidal $\infty$-category. Given an algebra object
  $A\in\Alg{\M}$, the following statements hold:
  \begin{enumerate}
  \item \cite[Section~4.3.2, Corollary 4.3.3.10]{Lur17} The $\infty$-category
    $\LMod{A}[\M]$ is presentable and it is right-tensored over $\M$ in the
    sense of~\cite[Definition~4.2.1.19]{Lur17}.
  \item\cite[Lemma~5.3.2.12(3)]{Lur17}\footnote{Although the statement of
      \cite[Lemma~5.3.2.12]{Lur17} involves uncountable (regular) cardinals, the
      proof of compact generation remains valid for the (regular) cardinal
      $\omega$.} If the $\infty$-category $\M$ is compactly generated in the
    sense of~\cite[Definition~5.5.7.1]{Lur09}, then $\LMod{A}[\M]$ is compactly
    generated by all objects of the form $A\otimes X$, where $X\in\M$ is a
    compact object.
  \item \cite[Theorem~4.8.4.6, Remark~4.8.4.7]{Lur17} Let $\N$ be a presentable
    $\infty$-category that is left-tensored over $\M$. Then, there is a
    canonical equivalence of $\infty$-categories
    \begin{center}
      $\LMod{A}[\M]\otimes_{\M}\N\simeq\LMod{A}[\N],$
  \end{center}
  where the left-hand side is defined via the relative tensor product
  \begin{center}
    $\RMod{\M}[\PrL]\times\LMod{\M}[\PrL]\longrightarrow\PrL,\qquad (\C,\D)\longmapsto\C\otimes_\M\D.$
  \end{center}
  \end{enumerate}
  Entirely analogous statements hold for $\infty$-categories of right modules in
  $\M$.
\end{theorem}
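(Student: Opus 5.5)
The statement collects three results of Lurie, so the plan is to reduce each item to the formal properties of the free–forgetful adjunction
\[
  A\otimes-\colon\M\rightleftarrows\LMod{A}[\M]\noloc U.
\]
The key input throughout is that $-\otimes-$ preserves colimits in each variable separately.

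For~(1), I would first show that $U$ is conservative and preserves all sifted colimits. It preserves sifted colimits because the monad $T=A\otimes-$ does, and $A\otimes(-)$ is colimit-preserving. By the $\infty$-categorical Barr–Beck theorem, $\LMod{A}[\M]$ is then the $\infty$-category of algebras over an accessible monad on a presentable $\infty$-category, and is therefore presentable. For the right tensoring, I would let $\M$ act by $(M,X)\mapsto M\otimes X$, with the $A$-action on the left factor. This is induced by the associativity constraint of $\M$, equivalently by the $\mathcal{BM}$-operad structure underlying $\M$. Colimit preservation in each variable follows by checking it after applying $U$, since $U$ is conservative and preserves the relevant colimits. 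The existence of $f_!$ and $f_*$ in \Cref{prop:change_of_algebra} already reflects this structure.

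For~(2), I would argue in three steps.
\begin{itemize}
\item Since $T=A\otimes-$ preserves filtered colimits, so does $U$.
\item For $X\in\M$ compact, the equivalence
\[
  \Map{A\otimes X}{-}\simeq\Map{X}{U(-)}
\]
then shows that $A\otimes X$ is compact.
\item These objects generate, because $U$ is conservative. Indeed, if $\Map{A\otimes X}{M}\simeq *$ for all compact $X$, then $UM\simeq 0$ (respectively, terminal), since the compact $X$ generate $\M$.
\end{itemize}
Finally, every object is a colimit of free modules on compact objects, via the bar resolution $T^{\bullet+1}M\to M$ together with writing each $T^nM$ as a filtered colimit. This gives compact generation in the sense of~\cite[Definition~5.5.7.1]{Lur09}.

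For~(3), I would construct a comparison functor
\[
  \LMod{A}[\M]\otimes_\M\N\longrightarrow\LMod{A}[\N].
\]
By the universal property of the relative tensor product, it corresponds to the $\M$-balanced colimit-preserving functor $(P,Y)\mapsto P\otimes Y$. Both sides preserve colimits in $\N\in\LMod{\M}[\PrL]$:
\begin{itemize}
\item the left-hand side by construction;
\item the right-hand side because $\LMod{A}[\N]$ is the category of modules over the colimit-preserving monad $A\otimes-$ on $\N$, by the argument of~(1).
\end{itemize}
It is an equivalence when $\N=\M$, and also when $\N=\M\otimes\K$ is a free module with $\K\in\PrL$; here one uses $\LMod{A}[\M]\otimes_\M\M\simeq\LMod{A}[\M]$ and commutes $\otimes\K$ with monadic categories. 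A general $\N$ is the geometric realization of its bar resolution $\M^{\otimes n+1}\otimes\N$ by free modules, which gives the equivalence in general.

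The main obstacle is this last step. The passage to $\N$ relies on knowing that $\N\mapsto\LMod{A}[\N]$ commutes with these geometric realizations in $\PrL$. I would try first to identify $\LMod{A}[\N]$ with $\Fun<\M>[L]{\RMod{A}[\M]}{\N}$, which should itself be checked via the monadicity of~(1). Alternatively, I would use the dualizability of $\RMod{A}[\M]$ as an $\M$-module, which makes both sides linear in $\N$.
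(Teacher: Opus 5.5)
Items (1) and (2) are fine. Your direct argument for (2) also works at the cardinal $\omega$ outright, which the paper has to address in a footnote.

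There is one slip in (2). The claim that $\Map{A\otimes X}{M}\simeq *$ for all compact $X$ forces $M$ to be terminal is a generation criterion only in the stable setting, and $\M$ is not assumed stable. What you want is that the functors $\Map{A\otimes X}{-}\simeq\Map{X}{U(-)}$, for $X\in\M^\omega$, are jointly conservative. This is immediate from conservativity of $U$. Your bar-resolution argument already gives generation under colimits anyway, so nothing is lost.

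The genuine gap is in (3), at exactly the step you flag, and the reason you give for it does not close it. That each $\LMod{A}[\N]$ is monadic over $\N$ via a colimit-preserving monad says nothing about how these categories behave under colimits in $\LMod{\M}[\PrL]$.
\begin{itemize}
\item Such colimits are computed in $\PrL$, that is, as limits of $\infty$-categories along the right adjoints of the transition functors.
\item Those right adjoints are only lax $\M$-linear. So the limit carries no evident pointwise $\M$-action from which you could read off $\LMod{A}$ of the colimit.
\item Given the free case, colimit preservation of $\N\mapsto\LMod{A}[\N]$ is equivalent to the statement itself. Resolving $\N$ by free $\M$-modules therefore only trades the theorem for a claim of the same strength.
\item The free case $\LMod{A}[\M\otimes\K]\simeq\LMod{A}[\M]\otimes\K$ also needs an argument. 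You must show that $U\otimes\K$ is conservative, and that its monad is $A\otimes-$ as a monad, not merely as a functor.
\end{itemize}

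Once you have that free-case argument, it works verbatim with $-\otimes_\M\N$ in place of $-\otimes\K$, and no reduction is needed.
\begin{itemize}
\item The adjunction $A\otimes-\dashv U$ lives in $\RMod{\M}[\PrL]$, since $U$ preserves colimits and is strictly, not merely laxly, right $\M$-linear.
\item Applying $-\otimes_\M\N$ therefore gives an adjunction $F_\N\dashv U_\N$, with $U_\N\colon\LMod{A}[\M]\otimes_\M\N\to\N$ colimit-preserving.
\item $U_\N$ is conservative because the image of $F_\N$ generates the source under colimits. The source is generated by images of objects $P\boxtimes Y$, and each $P$ is a colimit of free modules. So $P\boxtimes Y$ is a colimit of objects $F_\N(X\otimes Y)$.
\item By Barr--Beck--Lurie, the source is then modules over the monad $U_\N F_\N$. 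This monad is the image of the algebra $A$ under the monoidal functor $\M\simeq\operatorname{End}_\M(\M)\to\operatorname{End}(\N)$ induced by $-\otimes_\M\N$. That is, it is the monad $A\otimes-$ on $\N$, over which $\LMod{A}[\N]$ is also monadic.
\end{itemize}

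Your second suggestion can also be completed. Besides the identification $\LMod{A}[\N]\simeq\LFun<\M>{\RMod{A}[\M]}{\N}$ (cf.\ \cite[Theorem~4.8.4.1]{Lur17}), it requires showing that $\RMod{A}[\M]$ is dualisable over $\M$ with dual $\LMod{A}[\M]$. That is a substantial input rather than a formality.
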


Over a symmetric monoidal $\infty$-category, one has the following Deligne-type
tensor product for module $\infty$-categories, compare
with~\cite[Section~5]{Del90}.

\begin{corollary}[{\cite{Lur17}}]
  \label{cor:Deligne-tensor-product}
  Let $\M$ be a presentably symmetric monoidal presentable $\infty$-category. Given a pair of algebra
  objects $A,B\in\Alg{\M}$, there is an equivalence of $\infty$-categories
  \[
    \LMod{A}[\M]\otimes_\M\LMod{B}[\M]\simeq\LMod{A\otimes B}[\M].
  \]
\end{corollary}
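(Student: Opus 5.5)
The plan is to deduce the equivalence from \Cref{thm:props_of_Mod}(3), with $\N=\LMod{B}[\M]$ and the left action of $\M$ on $\N$ coming from the symmetric monoidal structure, followed by an identification of $\LMod{A}[\LMod{B}[\M]]$ with $\LMod{A\otimes B}[\M]$.

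\textbf{Step 1: the tensor product.} By \Cref{thm:props_of_Mod}(1), $\N=\LMod{B}[\M]$ is presentable. Since $\M$ is symmetric, we may use the identification $\M\simeq\M^\rev$. Under it, the right tensoring of $\LMod{B}[\M]$ over $\M$ becomes a left tensoring, given on objects by $X\otimes M$ with $B$ acting through the symmetry. Moreover, this action preserves colimits in each variable, so $\N\in\LMod{\M}[\PrL]$. Then \Cref{thm:props_of_Mod}(3) gives
\[
\LMod{A}[\M]\otimes_\M\LMod{B}[\M]\simeq\LMod{A}[\LMod{B}[\M]],
\]
the $\infty$-category of left $A$-modules in $\LMod{B}[\M]$ for this left $\M$-action.

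\textbf{Step 2: identifying $A$-modules in $B$-modules.} This is the main obstacle: we must show
\[
\LMod{A}[\LMod{B}[\M]]\simeq\LMod{A\otimes B}[\M].
\]
An object on the left is an object $M\in\M$ with commuting actions $A\otimes M\to M$ and $B\otimes M\to M$. The cleanest route is via Lurie's treatment of modules over tensor products of algebras. Specifically, left $A$-modules in left $B$-modules are $\mathrm{LM}^\otimes\otimes\mathrm{LM}^\otimes$-algebras, and Dunn-type additivity for the $\mathrm{LM}$ operad identifies these with modules over $A\otimes B$ (compare \cite[Section~4.8.4 and Theorem~4.8.5.16]{Lur17}).

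Alternatively, I would argue monadically via Barr--Beck--Lurie. Both sides are monadic over $\M$ through conservative forgetful functors that preserve all colimits, by \Cref{thm:Lurie-transfer-model_structure} and the discussion after \Cref{prop:change_of_algebra}. The left adjoints are $X\mapsto A\otimes B\otimes X$ on both sides; on the left this is the composite of free functors, using that the $B$-structure on $A\otimes(B\otimes X)$ is induced through the symmetry. One then checks that the two monads agree as algebras in $\Fun{\M}{\M}$, because both are the monad associated to the algebra $A\otimes B$ acting by left multiplication. This last identification of monad structures is the delicate coherence point, and I would handle it by invoking the operadic statement rather than verifying coherences by hand.

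\textbf{Step 3: assembling.} Composing the equivalences of Steps 1 and 2 gives the claimed equivalence
\[
\LMod{A}[\M]\otimes_\M\LMod{B}[\M]\simeq\LMod{A\otimes B}[\M].
\]
Both sides are naturally $\M$-module categories, and the equivalence is compatible with these structures since each step is $\M$-linear.
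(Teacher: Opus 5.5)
Your plan is essentially the paper's proof. The paper makes your Step~1 precise by rewriting $\LMod{B}[\M]\simeq\RMod{B^\rev}[\M]$, which is naturally left-tensored over $\M$, before invoking \Cref{thm:props_of_Mod}(3); it then carries out your Step~2 through bimodules, $\LMod{A}[\RMod{B^\rev}[\M]]\simeq\Bimod{A}{B^\rev}[\M]\simeq\LMod{A\otimes B}[\M]$, citing \cite[p.~738]{Lur17} and \cite[Proposition~4.6.3.11]{Lur17}.

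Use that bimodule identification to justify Step~2, because your operadic description is inaccurate. An $\mathrm{LM}^\otimes\otimes\mathrm{LM}^\otimes$-algebra carries strictly more data than a triple $(A,B,M)$: it has four underlying objects, and its $(\mathfrak{a},\mathfrak{a})$-component is already an $\EE_2$-algebra by Dunn additivity. So it does not model ``$A$-modules in $B$-modules'', and the ``Dunn-type additivity'' explanation should be dropped.
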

\begin{proof}
  Indeed, \Cref{thm:props_of_Mod}, yields the second equivalence in the
  following chain of equivalences of $\infty$-categories
  \begin{align*}
    \LMod{A}[\M]\otimes_\M\LMod{B}[\M]&\simeq\LMod{A}[\M]\otimes_\M\RMod{B^\rev}[\M]\\
                                      &\simeq\LMod{A}[\RMod{B^\rev}[\M]]\\
                                      &\simeq\Bimod{A}{B^\rev}[\M]\\
                                      &\simeq\LMod{A\otimes B}[\M].
  \end{align*}
  The third equivalence is discussed in~\cite[p.~738]{Lur17} and the fourth
  equivalence follows from~\cite[Proposition~4.6.3.11]{Lur17}.
\end{proof}

\subsection{Stable $\infty$-categories}

An $\infty$-category $\C$ is \emph{stable} if it admits a zero object, finite
limits, finite colimits, and the loop functor $\Omega\colon x\mapsto 0\times_x0$
is an equivalence~\cite[Proposition 1.4.2.11]{Lur17}. In this case, the homotopy
category $\Ho{\C}$ is additive and the pair $(\Ho{\C},\Sigma)$ is endowed with
the structure of a triangulated category, where the suspension functor
$\Sigma\colon x\mapsto 0\amalg_x0$ is the adjoint quasi-inverse of $\Omega$; the
triangles in $\Ho{\C}$ are induced by all the diagrams in $\C$ of the form
\[
  \begin{tikzcd}
    x\rar\dar\ar[phantom]{dr}[description]{\square}&y\dar\rar\ar[phantom]{dr}[description]{\square}&0\dar\\
    0\rar&z\rar&\Sigma x
  \end{tikzcd}
\]
in which both squares are bicartesian, see~\cite[Theorem~1.1.2.14]{Lur17} for
details.

For later use, we recall that the $\infty$-category $\PrStLcg$ of essentially
small idempotent-complete stable $\infty$-categories admits a closed symmetric
monoidal structure~\cite[Theorem~3.1]{BGT13}. Given a pair of objects
$\A,\B\in\PrStLcg$, the corresponding internal $\operatorname{Hom}$ object is
the stable $\infty$-category $\Fun[ex]{\A}{\B}$ of exact functors between them.
We remind the reader that a functor $F\colon\A\to\B$ is \emph{exact} if it
satisfies the following equivalent conditions~\cite[Proposition~1.1.4.1]{Lur17}:
\begin{itemize}
\item The functor $F$ preserves finite colimits.
\item The functor $F$ preserves finite limits.
\end{itemize}
In particular, we may consider algebra objects in $\PrStLcg$, that is monoidal
stable $\infty$-categories whose tensor product is exact in each variable
separately.

\begin{definition}
  \label{def:right-M-module_cat-small}
  Let $\M\in\Alg{\PrStLcg}$, that is $\M$ is a monoidal stable
  $\infty$-category whose tensor product is exact in each variable separately. We call
  $\RMod{\M}[\PrStLcg]$ the $\infty$-category of \emph{right $\M$-module (stable) $\infty$-categories}. If the
  monoidal structure on $\M$ is symmetric, we write
  $\Mod{\M}[\PrStLcg]=\RMod{\M}[\PrStLcg]$ and call it the $\infty$-category of
  \emph{$\M$-module (stable) $\infty$-categories} or of \emph{$\M$-linear
    (stable) $\infty$-categories}.
\end{definition}

We shall primarily be concerned with the closed symmetric monoidal
$\infty$-category $\PrStL$ of presentable stable
$\infty$-categories\footnote{The homotopy category of a presentable stable
  $\infty$-category is a well-generated triangulated category in the sense of
  Neeman~\cite{Nee01}, see ~\cite[Proposition~6.10]{Ros05}
  and~\cite[Proposition~A.3.7.6]{Lur09}.} and colimit-preserving functors
between them~\cite[Proposition~4.8.2.18]{Lur17}. Given a pair of objects
$\A,\B\in\PrStL$, the corresponding internal $\operatorname{Hom}$ object is the
presentable stable $\infty$-category $\LFun{\A}{\B}$ of colimit-preserving
functors between them; by the Adjoint Functor
Theorem~\cite[Corollary~5.5.2.9]{Lur09}, every colimit-preserving functor
$\A\to\B$ admits a right adjoint, hence the notation. The algebra objects of
$\PrStL$ play a central role in this article.

\begin{definition}
  \label{def:right-M-module_cat}
  Let $\M\in\Alg{\PrStL}$, that is $\M$ is a presentably monoidal stable
  $\infty$-category. We call
  $\RMod{\M}[\PrStL]$ the $\infty$-category \emph{right $\M$-module (stable) $\infty$-categories}. If the
  monoidal structure on $\M$ is symmetric, we write
  $\Mod{\M}[\PrStL]=\RMod{\M}[\PrStL]$ and call it the $\infty$-category of
  \emph{$\M$-module (stable) $\infty$-categories} or of \emph{$\M$-linear (stable) $\infty$-categories}.
\end{definition}

The symmetric monoidal $\infty$-categories $\PrStLcg$ and $\PrStL$ are related
by the $\operatorname{Ind}$-completion\footnote{Given an essentially small
  (stable) $\infty$-category $\C$, its $\operatorname{Ind}$-completion is the
  (stable) $\infty$-category obtained from $\C$ by freely adjoining filtered
  colimts, see~\cite[Section~5.3.5]{Lur09}
  and~\cite[Proposition~1.1.3.6]{Lur17}.} functor
\[
  \Ind\colon\PrStLcg\longrightarrow\PrStL,\qquad
  \C\longmapsto\Ind[\C],
\]
This is a symmetric monoidal functor (see~\cite[Example~5.3.6.8]{Lur09}
and~\cite[Remark~4.8.1.8]{Lur17}) that identifies $\PrStLcg$ with the non-full
(!) subcategory of $\PrStL$ whose objects are the compactly-generated stable
$\infty$-categories and with morphisms the colimit-preserving functors that
preserve compact objects. In particular, we may apply
\Cref{prop:monoidal_functors_preserve_algs_mods} in this context.

\begin{remark}
  We have introduced certain ambiguity in
  \Cref{def:right-M-module_cat,def:right-M-module_cat-small}, for the
  terminology does not specify whether we are in the `large world' of presentable
  $\infty$-categories or in the `small world.' However, this distinction is implicit in
  the size of $\M$.
\end{remark}

\subsection{Frobenius exact categories}

Let $\E$ be a \emph{Frobenius exact category}, that is a Quillen exact category
with enough projectives, enough injectives, and such that the classes of
projective objects and of injective objects in $\E$
coincide~\cite[Section~I.2]{Hap88}. The \emph{stable category} of $\E$, denoted
$\underline{\E}$, is the quotient of $\E$ by its ideal of morphisms that factor
through a projective-injective object. We denote by $\Weq[st]$ the class of
\emph{stable isomorphisms} in $\E$, that is the morphisms in $\E$ whose class in
$\underline{\E}$ is an isomorphism. Recall also that the stable category of $\E$
carries a triangulated structure~\cite[Theorem~I.2.6]{Hap88}.

The following result is well-known to experts,
see~\cite[Proposition~2.3.1]{Jas26} for a proof that relies heavily on results
of~\cite{Cis19}. The upshot is that every algebraic triangulated category in the
sense of~\cite[Section~3.6]{Kel06} admits an $\infty$-categorical enhancement.

\begin{proposition}[Cisinski]
  \label{prop:Cisinski-Frobenius-stable-cat}
  Let $\E$ be a Frobenius exact category. Then, the $\infty$-categorical
  localisation $\E[\Weq[st]^{-1}]$ is a stable $\infty$-category and, moreover,
  there is a canonical equivalence of triangulated categories
  \[
    \Ho{\E[\Weq[st]^{-1}]}\simeq\underline{\E}.
  \]
\end{proposition}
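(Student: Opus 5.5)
To prove this, I would follow the approach of \cite[Proposition~2.3.1]{Jas26}, which rests on Cisinski's theory of $\infty$-categories with weak equivalences and fibrations (or cofibrations) \cite[Chapter~7]{Cis19}. The idea is to show that $\E$, with the stable isomorphisms as weak equivalences and the admissible monomorphisms as cofibrations, is a category of cofibrant objects in the sense of Brown--Cisinski. Dually, with the admissible epimorphisms as fibrations, it is a category of fibrant objects.

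\textbf{The Brown--Cisinski axioms.} First I would verify the axioms for the cofibration structure.
\begin{itemize}
\item Pushouts along admissible monomorphisms exist and are again admissible monomorphisms, by the axioms of an exact category.
\item Stable isomorphisms satisfy two-out-of-three, since they are the preimages of isomorphisms in $\underline{\E}$.
\item A pushout of a trivial cofibration is a trivial cofibration. An admissible monomorphism is a stable isomorphism exactly when its cokernel is projective-injective, and cokernels are preserved by pushouts.
\item Every morphism $f\colon x\to y$ factors as the admissible monomorphism $x\to y\oplus I(x)$ followed by the projection, where $x\to I(x)$ is an injective hull. The projection is split with projective-injective kernel, so it is a stable isomorphism.
\end{itemize}
From this, \cite[Theorem~7.5.18]{Cis19} identifies the homotopy category of $\E[\Weq[st]^{-1}]$ with the $1$-categorical localisation $\E[\Weq[st]^{-1}]_1$. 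I then need to identify this localisation with $\underline{\E}$. The quotient $\E\to\underline{\E}$ inverts stable isomorphisms. Conversely, two morphisms with the same image in $\underline{\E}$ are already equal in the localisation. Indeed, a morphism factoring through a projective-injective $P$ becomes zero, because $P\to 0$ is a stable isomorphism, so $P$ is a zero object after localising.

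\textbf{Stability.} Next I would show the localisation is stable. Cisinski's results give that the localisation of a category of cofibrant objects has finite colimits, computed by homotopy pushouts (ordinary pushouts along cofibrations), and has an initial object $0$. Since $0$ is also a zero object of $\E$, the dual fibration structure gives finite limits and shows that $0$ is a zero object of the localisation.

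For stability it then suffices to show that the suspension functor is an equivalence. Suspension is computed by the pushout of $0\leftarrow x\rightarrowtail I(x)$, which gives $\Sigma x=I(x)/x$, Happel's suspension. Dually, the loop functor is $\Omega x=\ker(P(x)\twoheadrightarrow x)$. On $\underline{\E}$ these two functors are mutually inverse, so $\Sigma$ is an equivalence of the homotopy category, hence of the $\infty$-category.

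Alternatively, I would check directly that a square is a pushout if and only if it is a pullback. A conflation square $x\rightarrowtail y\twoheadrightarrow z$ over $0$ is simultaneously a homotopy pushout in the cofibration structure and a homotopy pullback in the fibration structure.

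\textbf{Triangulated structure.} Finally, the distinguished triangles are induced by cofibre sequences. A cofibre sequence arises from a conflation $x\rightarrowtail y\twoheadrightarrow z$ together with the connecting map $z\to\Sigma x$, obtained by comparing with $x\rightarrowtail I(x)$. These are exactly Happel's standard triangles, so the equivalence $\Ho{\E[\Weq[st]^{-1}]}\simeq\underline{\E}$ is triangulated.

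\textbf{Main obstacle.} I expect the hardest step to be the compatibility needed for stability: the cofibrant and fibrant structures must define the same localisation, with homotopy pushouts and homotopy pullbacks agreeing. The first thing I would try is to use that, in a Frobenius exact category, conflations give squares that are bicartesian in both structures, and to apply \cite[Theorem~7.5.18]{Cis19} to both structures simultaneously.
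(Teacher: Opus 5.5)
Your proposal is correct and is essentially the argument the paper defers to. That argument is Jasso's Proposition~2.3.1, which rests on Cisinski's theory of $\infty$-categories with weak equivalences and (co)fibrations: inflations and deflations serve as (co)fibrations, stable isomorphisms as weak equivalences, and conflations yield squares that become bicartesian in $\E[\Weq[st]^{-1}]$.

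Two small remarks. First, the ``main obstacle'' you flag is not really one, since both structures localise the same pair $(\E,\Weq[st])$, so their pushouts and pullbacks automatically live in the same $\infty$-category. Second, to pass from ``maps factoring through projective-injectives become zero'' to ``stably equal maps become equal'', you should note that the localisation functor is additive, because it preserves finite biproducts. Alternatively, use that the two sections $(1,0)^T$ and $(1,i)^T$ of the stable isomorphism $x\oplus I(x)\to x$ become equal after localising.
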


\subsection{Hovey's abelian model categories}

The model category structures that we consider in this article are, for the most
part, abelian model structures in the sense of Hovey. The definition is as
follows.

\begin{definition}[{\cite{Hov02}}]
  Let $\A$ be an abelian category with all small limits and all small colimits.
  A model structure on $\A$ is \emph{abelian} if the cofibrations are precisely
  the monomorphisms whose cokernel is cofibrant and the fibrations are precisely
  the epimorphisms whose kernel is fibrant.
\end{definition}

\begin{notation}
  Given an abelian model category $\A$, we denote by $\W$ the full subcategory
  of $\A$ spanned by the \emph{acyclic objects}, that is those objects $x\in\A$
  such that the unique morphism $0\to x$ is a weak equivalence (equivalently, by
  the 2-out-of-3 property, such that the unique morphism $x\to 0$ is a weak
  equivalence).
\end{notation}

\begin{definition}
  \label{def:determined_abelian_model_structure}
  Let $\A$ be an abelian category with all small limits and all small colimits.
  Suppose given a triple $(\C,\W,\F)$ of full subcategories of $\A$ and define
  the following classes of morphisms in $\A$:
  \begin{itemize}
  \item $\Cofib$ is the class of all monomorphisms with cokernel in $\C$.
  \item $\Fib$ is the class of all epimorphisms with kernel in $\F$.
  \item $\Weq$ is the class of morphisms of the form $w=pi$ where $i$ is a
    monomorphism with cokernel in $\W$ and $p$ is an epimorphism with kernel in
    $\W$.
  \end{itemize}
  We say that the triple $(\C,\W,\F)$ of full subcategories of $\A$
  \emph{determines an abelian model structure on $\A$} if the triple
  $(\Cofib,\Weq,\Fib)$ is an abelian model structure, in which case $\C$ is the
  class of cofibrant objects, $\F$ is the class of fibrant objects, and $\W$ is
  the class of acyclic objects.
\end{definition}

\begin{remark}
  There is a close relationship between abelian model categories and complete
  cotorsion pairs, see~\cite[Theorem~2.2]{Hov02} for the precise statement. We
  only recall the following fact. Let $\A$ be an abelian category endowed with
  an abelian model category structure determined by a triple of full
  subcategories $(\C,\W,\F)$. Then, one has the following equalities of full
  subcategories of $A$:
  \begin{align*}
    \C&=\set{X\in\A}[\forall Y\in\W\cap\F,\ \Ext<\A>[1]{X}{Y}=0],\\
    \W\cap\F&=\set{Y\in\A}[\forall X\in\C,\ \Ext<\A>[1]{X}{Y}=0],\intertext{and}
    \C\cap\W&=\set{X\in\A}[\forall Y\in\F,\ \Ext<\A>[1]{X}{Y}=0],\\
    \F&=\set{Y\in\A}[\forall X\in\C\cap\W,\ \Ext<\A>[1]{X}{Y}=0].
  \end{align*}
  For a survey of the theory of abelian (and, more generally, exact) model
  categories, we refer the reader to~\cite{Sto13}.
\end{remark}

\begin{definition}
  \label{def;hereditary_abelian_model_category_structure}
  Let $\A$ be an abelian category with all small limits and all small colimits.
  An abelian model structure on $\A$ is \emph{hereditary} if the
  class $\C$ of cofibrant objects is closed under taking kernels of epimorphisms in $\C$
  and the class $\F$ closed under taking cokernels of monomorphisms in $\F$.
\end{definition}

\begin{remark}
  \label{rmk:injective-model-structures-are-hereditary}
  Every abelian model category structure in which every object is cofibrant is
  hereditary~\cite[Corollary 1.1.12]{Bec14}. The corresponding statement for
  abelian model category structures in which every object is fibrant also holds.
  The abelian model structures that we consider in this article satisfy at least
  one of these conditions and are therefore hereditary.
\end{remark}

Hereditary abelian model categories are a source of stable $\infty$-categories,
as the following result shows.

\begin{theorem}
  \label{thm:Gillespie}
  Let $\A$ be an abelian category with all small limits and all small colimits.
  Suppose that $\A$ is endowed with a hereditary abelian model structure. The
  following statements hold:
  \begin{enumerate}
  \item\cite[Proposition~5.2]{Gil11} The full subcategory $\C\cap\F\subseteq\A$
    is a Frobenius exact category with respect to the exact structure inherited
    from $\A$. Moreover, the class of projective-injective objects in $\C\cap\F$
    is precisely $\C\cap\W\cap\F$ and a morphism in $\C\cap\F$ is a weak
    equivalence if and only if it is a stable isomorphism.
  \item The underlying $\infty$-category
    \[
      \MLoc[\Weq]{\A}\simeq(\C\cap\F)[\Weq[st]^{-1}]
    \]
    is a stable $\infty$-category
    (see~\Cref{prop:Cisinski-Frobenius-stable-cat}), which is presentable if the
    model structure on $\A$ is combinatorial (see~\Cref{thm:MLoc-presentable}).
    Moreover, there is a canonical equivalence of triangulated categories
    \[
      \Ho{\MLoc{\A}}\simeq\underline{(\C\cap\F)},
    \]
    where the right-hand side denotes the stable category of the Frobenius exact
    category $\C\cap\F$.
  \item If the model structure on $\A$ is combinatorial, then the presentable
    stable $\infty$-category $\MLoc[\Weq]{\A}$ is compactly generated if and
    only if its homotopy category $\Ho{\MLoc{\A}}$ is a compactly-generated
    triangulated category (see~\cite[Remark 1.4.4.3]{Lur17}).
  \end{enumerate}
\end{theorem}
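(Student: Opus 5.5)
Part (1) is already \cite[Proposition~5.2]{Gil11}, so the work is in (2) and (3), and I will reduce both to earlier results.

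For (2), I first pass from $\A$ to its cofibrant-fibrant objects. By the remark following the definition of $\MLoc{\A}$, the inclusion $\C\cap\F\hookrightarrow\A$ induces an equivalence $\MLoc{\C\cap\F}\xrightarrow{\sim}\MLoc{\A}$. Here $\MLoc{\C\cap\F}$ is the localisation at those weak equivalences of $\A$ lying in $\C\cap\F$. By part (1), these are exactly the stable isomorphisms of the Frobenius exact category $\C\cap\F$, so we get
\[
  \MLoc{\A}\simeq(\C\cap\F)[\Weq[st]^{-1}].
\]
\Cref{prop:Cisinski-Frobenius-stable-cat} then shows that this $\infty$-category is stable. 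It also gives a canonical triangulated equivalence between its homotopy category and $\underline{(\C\cap\F)}$.

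The one point needing care is that the triangulated structure on $\Ho{\MLoc{\A}}$ comes from stability. I must check that the equivalence of categories transported along $\MLoc{\C\cap\F}\simeq\MLoc{\A}$ is compatible with it. This is automatic: an equivalence of $\infty$-categories between stable $\infty$-categories is exact, so it induces a triangulated equivalence on homotopy categories. Presentability in the combinatorial case is \Cref{thm:MLoc-presentable}.

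For (3), I use that $\MLoc{\A}$ is presentable and stable. By \cite[Remark~1.4.4.3]{Lur17}, such a category is compactly generated exactly when its homotopy category is compactly generated as a triangulated category. The key inputs are two facts:
\begin{itemize}
  \item compactness of an object can be tested on $\Ho{}$, since filtered colimits in a stable presentable $\infty$-category are computed compatibly with coproducts in the homotopy category;
  \item generation by a set $S$ amounts to $S$ detecting zero objects via $\pi_0\Map$ of shifts.
\end{itemize}

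I expect the main obstacle to be only bookkeeping, namely citing the right identifications. The substantive content is already contained in \Cref{prop:Cisinski-Frobenius-stable-cat}, \cite{Gil11} and \cite{Lur17}.
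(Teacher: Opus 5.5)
Your proposal is correct and takes the same route as the paper. The paper gives no separate proof: each part is justified by exactly the inputs you use, namely \cite[Proposition~5.2]{Gil11} for (1), the cofibrant-fibrant presentation of $\MLoc{\A}$ together with \Cref{prop:Cisinski-Frobenius-stable-cat} and \Cref{thm:MLoc-presentable} for (2), and \cite[Remark~1.4.4.3]{Lur17} for (3).
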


The following criterion for detecting \emph{monoidal} abelian model categories
is useful.

\begin{theorem}[{\cite[Proposition~7.2]{Hov02}}]
  \label{thm:Hovey-abelian_monoidal_model_cat}
  Let $\A$ be an abelian category with small limits and small colimits that
  endowed is with a monoidal structure. Suppose that $\A$ is endowed with an
  abelian model structure such that the following conditions are satisfied:
  \begin{enumerate}
  \item The unit $\unit\in\A$ is cofibrant.
  \item The tensor product $x\otimes y$ of a pair of cofibrant objects
    $x,y\in\A$ is cofibrant.
  \item The full subcategory $\C\cap\W\subseteq\A$ of cofibrant acyclic objects
    is a $\C-\otimes$-ideal: for a pair of objects $x,y\in\C$, if $x\in\C\cap \W$
    or $y\in \C\cap\W$, then $x\otimes y\in\C\cap\W$.
  \item If $f\colon x\to y$ is a cofibration then, for each object $z\in\A$, the
    morphism ${z\otimes f\colon z\otimes x\to z\otimes y}$ is a monomorphism.
  \end{enumerate}
  Then, $\A$ is a monoidal model category.
\end{theorem}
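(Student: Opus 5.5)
To prove that $\A$ is a monoidal model category, I would check Hovey's pushout-product axiom directly, together with the unit axiom. The unit axiom is immediate because $\unit$ is cofibrant by (1). So the work is to show the following: given cofibrations $f\colon a\to b$ and $g\colon c\to d$, the pushout-product
\[
  f\mathbin{\square}g\colon (a\otimes d)\amalg_{a\otimes c}(b\otimes c)\longrightarrow b\otimes d
\]
is a cofibration, and it is moreover trivial if $f$ or $g$ is. In an abelian model structure the cofibrations are the monomorphisms with cofibrant cokernel, and the trivial cofibrations are the monomorphisms with cokernel in $\C\cap\W$. I would therefore aim to show two things: that $f\mathbin{\square}g$ is a monomorphism, and that its cokernel is $\operatorname{coker}f\otimes\operatorname{coker}g$.

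For the monomorphism claim, set $x=\operatorname{coker}f$ and $y=\operatorname{coker}g$, both in $\C$. Consider the square with vertices $a\otimes c$, $a\otimes d$, $b\otimes c$, $b\otimes d$. By (4), the maps $a\otimes g$, $b\otimes g$, $f\otimes c$ and $f\otimes d$ are monomorphisms. The tensor product need not be exact, but it is right exact in each variable whenever it preserves colimits in each variable; I would assume the monoidal structure is closed, or at least that $\otimes$ is right exact in each variable, as in all intended examples. Then the rows
\[
  0\to a\otimes c\to a\otimes d\to a\otimes y\to0
  \qquad\text{and}\qquad
  0\to b\otimes c\to b\otimes d\to b\otimes y\to0
\]
are short exact, with left exactness supplied by (4). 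The map between them on the right is $f\otimes y$. This map is a monomorphism by (4), applied to $f$ tensored on the other side. This requires reading (4) two-sidedly, which I would note as implicit in the hypothesis (or automatic in the symmetric case).

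A snake-lemma or diagram chase then identifies the kernel of $f\mathbin{\square}g$. The pushout $P$ sits in a short exact sequence $0\to b\otimes c\to P\to a\otimes y\to 0$. The induced map to $0\to b\otimes c\to b\otimes d\to b\otimes y\to0$ is the identity on the left and $f\otimes y$ on the right. By the five lemma (short version), $f\mathbin{\square}g$ is then monic, and its cokernel equals $\operatorname{coker}(f\otimes y)=x\otimes y$, using right exactness once more.

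It then remains to read off the cofibrancy conditions. The object $x\otimes y$ is cofibrant by (2), so $f\mathbin{\square}g$ is a cofibration. If $f$ or $g$ is trivial, then $x$ or $y$ lies in $\C\cap\W$, and by (3) $x\otimes y\in\C\cap\W$, so $f\mathbin{\square}g$ is a trivial cofibration. I expect the main obstacle to be the exactness bookkeeping in the previous paragraph: making sure that only hypothesis (4) and right exactness are used, since no flatness is assumed. This is precisely why (4) is phrased for arbitrary $z$ rather than just cofibrant $z$.
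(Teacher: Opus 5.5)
Your argument is correct. It is essentially Hovey's own proof of \cite[Proposition~7.2]{Hov02}, which the paper cites without reproducing: the short exact sequence $0\to b\otimes c\to P\to a\otimes y\to 0$ shows that $f\mathbin{\square}g$ is a monomorphism with cokernel $\operatorname{coker}f\otimes\operatorname{coker}g$, and then (2) and (3) finish the proof. The two extra hypotheses you flag are genuinely used: right exactness of $\otimes$ in each variable, and a two-sided form of (4) to get $f\otimes y$ monic. Both hold in Hovey's closed symmetric monoidal setting and in the paper's applications, where $\otimes$ is exact in each variable.
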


We record the following immediate corollary of
\Cref{thm:Hovey-abelian_monoidal_model_cat}.

\begin{corollary}
  \label{thm:Hovey-injective_abelian_monoidal_model_cat}
  Let $\A$ be an abelian category with small limits and small colimits that is
  endowed with a monoidal structure such that the tensor product functor
  \[
    -\otimes-\colon\A\times\A\longrightarrow\A
  \]
  is exact in each variable separately. Suppose that $\A$ is endowed with an
  abelian model structure in which every object is cofibrant. If the full
  subcategory $\W\subseteq\A$ of acyclic objects is a $\otimes$-ideal, then $\A$
  is a monoidal model category.
\end{corollary}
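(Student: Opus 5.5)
We check the four hypotheses of \Cref{thm:Hovey-abelian_monoidal_model_cat} one at a time. Since every object of $\A$ is cofibrant, we have $\C=\A$ and $\C\cap\W=\W$.

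\emph{Conditions (1) and (2).} The unit $\unit$ is cofibrant, and so is the tensor product of any two objects, because every object is cofibrant. Nothing needs to be proved here.

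\emph{Condition (3).} Since $\C\cap\W=\W$, this condition asks: for arbitrary $x,y\in\A$ with $x\in\W$ or $y\in\W$, we have $x\otimes y\in\W$. That is exactly the hypothesis that $\W$ is a two-sided $\otimes$-ideal. If ``$\otimes$-ideal'' were read as one-sided, we would need closure on both sides; I take the hypothesis to mean the two-sided version, as the symmetric phrasing of condition (3) suggests.

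\emph{Condition (4).} In an abelian model structure, a cofibration $f\colon x\to y$ is by definition a monomorphism (with cofibrant cokernel). By hypothesis, $z\otimes-$ is exact, in particular left exact, so it preserves monomorphisms. Hence $z\otimes f$ is a monomorphism. Strictly, one should also check $f\otimes z$ on the right, since $\A$ is not assumed symmetric. This follows in the same way from exactness of $-\otimes z$, because the hypothesis gives exactness in each variable separately.

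All four hypotheses hold, so \Cref{thm:Hovey-abelian_monoidal_model_cat} shows that $\A$ is a monoidal model category.

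There is no real obstacle. The only points needing care are bookkeeping: the identification $\C\cap\W=\W$, and making sure that exactness and the ideal property are used on both sides, so that the non-symmetric version of Hovey's criterion applies.
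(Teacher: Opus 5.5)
Your proposal is correct and follows the same route the paper intends: it states this as an immediate consequence of \Cref{thm:Hovey-abelian_monoidal_model_cat} without writing out an argument. Your verification fills in that argument. Since every object is cofibrant, you have $\C=\A$ and $\C\cap\W=\W$, so conditions (1)--(3) reduce to the hypotheses, and condition (4) holds because cofibrations are monomorphisms and $z\otimes-$ is exact.
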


We now combine some of the previous recollections into a single result that we
use heavily in the sequel.

\begin{theorem}
  \label{thm:MLoc-presentably-stable-monoidal}
  Let $\M=(\M,\mathbf{1},\otimes)$ be a combinatorial monoidal hereditary
  abelian model category. The following statements hold:
  \begin{enumerate}
  \item The $\infty$-category $\MLoc{\M}$ is a presentably monoidal stable
    $\infty$-category, which is symmetric if the monoidal structure on $\M$ admits a
    symmetric refinement. In particular, the monoidal structure on $\MLoc{\M}$ is
    biclosed.
  \item The presentable $\infty$-category $\MLoc{\M}$ is compactly generated if
    and only if its homotopy category $\Ho{\MLoc{\M}}$ is a compactly-generated
    triangulated category.
  \end{enumerate}
\end{theorem}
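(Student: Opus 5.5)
The plan is to assemble this result from the recollections already made, with no new input beyond checking that the hypotheses line up. For statement (1), I would first invoke \Cref{thm:MLoc-presentably-monoidal}. Since $\M$ is a combinatorial monoidal model category, $\MLoc{\M}$ carries a monoidal structure $\Lotimes*$ whose tensor product preserves colimits in each variable separately. This structure is symmetric whenever the monoidal structure on $\M$ is symmetric, and it is biclosed. Presentability of $\MLoc{\M}$ comes from \Cref{thm:MLoc-presentable}, again using that the model structure is combinatorial. Stability comes from \Cref{thm:Gillespie}(2), which applies because the abelian model structure is hereditary: it identifies $\MLoc{\M}$ with the localisation $(\C\cap\F)[\Weq[st]^{-1}]$ of a Frobenius exact category, and that localisation is stable by \Cref{prop:Cisinski-Frobenius-stable-cat}. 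Combining these, $\MLoc{\M}$ is a presentable stable $\infty$-category with a colimit-preserving (in each variable) monoidal structure, that is, a presentably monoidal stable $\infty$-category, equivalently an object of $\Alg{\PrStL}$. Biclosedness is recorded in \Cref{thm:MLoc-presentably-monoidal}(3); it also follows directly from the Adjoint Functor Theorem, since $x\Lotimes*-$ and $-\Lotimes* x$ preserve colimits between presentable $\infty$-categories.

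The one point needing care is what ``admits a symmetric refinement'' means on the model-categorical side. I would read it as: $\M$ is a symmetric monoidal model category, so that \Cref{thm:MLoc-presentably-monoidal}(1) produces a symmetric monoidal structure on $\MLoc{\M}$ that refines the given one. I expect this to be the only mildly delicate step, and it is handled by the cited result of Lurie (\cite[Example~4.1.7.6, Lemma~4.1.8.8]{Lur17}), which treats the symmetric and associative cases uniformly.

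For statement (2), I would quote \Cref{thm:Gillespie}(3) directly: $\M$ is combinatorial and hereditary abelian, so $\MLoc{\M}$ is compactly generated if and only if $\Ho{\MLoc{\M}}$ is a compactly-generated triangulated category. Behind this is \cite[Remark~1.4.4.3]{Lur17}: for a presentable stable $\infty$-category, compactness of an object and generation can both be detected in the homotopy category. Compactness is detected because filtered colimits reduce to coproducts together with cofibres, and mapping spectra are determined by the abelian groups $\pi_0\Map{\Sigma^n x}{-}$. Generation is detected because $\pi_0$-orthogonality to all shifts of a set of compact objects forces an object to vanish. The monoidal structure plays no role in (2).
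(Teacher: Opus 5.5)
Your proposal is correct and follows the paper's proof, which consists of the single line ``Combine \Cref{thm:MLoc-presentably-monoidal,thm:Gillespie}.'' Your account of which part of each cited result supplies what (presentability and stability from \Cref{thm:Gillespie}(2), the colimit-preserving, possibly symmetric, biclosed monoidal structure from \Cref{thm:MLoc-presentably-monoidal}, and statement (2) from \Cref{thm:Gillespie}(3)) is just that combination spelled out.
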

\begin{proof}
  Combine \Cref{thm:MLoc-presentably-monoidal,thm:Gillespie}.
\end{proof}

\section{The base monoidal stable $\infty$-category}
\label{sec:base_category}

We fix a finite-dimensional Hopf algebra $H$. The tensor product of vector
spaces $\otimes=\otimes_\kk$ induces a monoidal structure on the category
\[
  \LMod{H}=\LMod{H}[\Mod{\kk}]
\]
of all left $H$-modules with monoidal unit the simple $H$-module $\kk$, which is a
symmetric monoidal structure if and only if $H$ is cocommutative, see for
example~\cite[Sections~1.8]{Mon93}. In particular, the tensor product functor
\[
  -\otimes-\colon\LMod{H}\times\LMod{H}\longrightarrow\LMod{H}
\]
is exact and preserves small colimits in each variable separately, so that $\LMod{H}$ is a presentably
monoidal category.

\begin{notation}
  \label{not:internal_hom_LMod_H}
  For formal reasons, the monoidal structure on $\LMod{H}$ is biclosed. In
  view of the adjunction isomorphisms
  \[
    \Hom<H>{H\otimes Y}{Z}\cong\Hom<\kk>{Y}{Z}\quad\text{and}\quad
    \Hom<H>{X\otimes H}{Y}\cong\Hom<\kk>{X}{Y},
  \]
  the underlying vector space of both the right and the left internal
  $\operatorname{Hom}$ objects is the vector space of $\kk$-linear maps; beware,
  however, that the two internal $\operatorname{Hom}$ objects are endowed with
  different left $H$-module structures since we do not assume that the Hopf
  algebra $H$ is cocommutative. In case there is a need to distinguish between
  these, we denote them by
  \[
    \Hom<\kk>[r]{Y}{Z}\in\LMod{H}\qquad\text{and}\qquad\Hom<\kk>[l]{X}{Y}\in\LMod{H},
  \]
  respectively.
\end{notation}

We also remind the reader that $H$ is a Frobenius algebra, so that the
Grothendieck category $\LMod{H}$ is a Frobenius abelian
category~\cite[Theorem~2.1.3]{Mon93}.

\begin{remark}
  We also need to consider the monoidal category $\LMod{H}<rev>$
  (\Cref{not:reversed_monoidal_cat}), which corresponds to the monoidal
  structure on $\LMod{H}$ associated to the Hopf algebra $H^\cop$ obtained from
  $H$ by reversing its comultiplication while preserving its multiplication (and
  which is equipped with the inverse
  antipode).
\end{remark}

We also need the following standard facts.

\begin{lemma}[{\cite[Proposition~2.3.10]{Wei94}}]
  \label{lemma:LR-preserving-proj-inj}
  Let $L\colon \A\rightleftarrows\B\colon R$ be an adjoint pair between abelian
  categories. If $R$ is exact, then $L$ preserves projective objects. Dually, if
  $L$ is exact, then $R$ preserves injective objects.
\end{lemma}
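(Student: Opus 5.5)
The plan is to argue directly from the definition of projective object as one whose covariant $\operatorname{Hom}$ functor is exact. Suppose $R$ is exact and let $P\in\A$ be projective. The adjunction gives a natural isomorphism of functors $\B\to\mathrm{Ab}$:
\[
  \Hom<\B>{LP}{-}\cong\Hom<\A>{P}{R(-)}.
\]
The right-hand side is the composite of the exact functor $R$ with the exact functor $\Hom<\A>{P}{-}$, and is therefore exact. Consequently $\Hom<\B>{LP}{-}$ is exact as well. Concretely, for any epimorphism $b\to b'$ in $\B$, the map $Rb\to Rb'$ is an epimorphism in $\A$, so projectivity of $P$ lets every morphism $P\to Rb'$ lift to $P\to Rb$. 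Under the adjunction this says exactly that every morphism $LP\to b'$ lifts to $LP\to b$. Hence $LP$ is projective.

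The dual statement follows by passing to opposite categories. An adjunction $L\dashv R$ between $\A$ and $\B$ induces an adjunction $R^{\op}\dashv L^{\op}$ between $\B^{\op}$ and $\A^{\op}$. Injective objects of $\B$ are precisely the projective objects of $\B^{\op}$, and exactness of $L$ is the same as exactness of $L^{\op}$. Applying the first part to this opposite adjunction shows that $R$ preserves injective objects. Alternatively, one can repeat the argument above directly using the isomorphism $\Hom<\A>{-}{RI}\cong\Hom<\B>{L(-)}{I}$.

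Naturality of the adjunction bijection is what makes lifts correspond under the isomorphism, and it is the only point that needs care. There is no real obstacle here.
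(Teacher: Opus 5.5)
Your proof is correct. It is the standard argument: the paper gives no proof of its own and simply cites Weibel, where the claim is proved the same way. There, the natural isomorphism $\operatorname{Hom}_{\B}(LP,-)\cong\operatorname{Hom}_{\A}(P,R(-))$ exhibits $\operatorname{Hom}_{\B}(LP,-)$ as a composite of exact functors, and the injective case follows dually.
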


\begin{corollary}
  \label{coro:hom-injectives}
  Let $V\in\LMod{H}$. The functor
  \[
    -\otimes V\colon\LMod{H}\longrightarrow\LMod{H}
  \]
  preserves projective(-injective) $H$-modules. Dually, the functor
  \[
    \Hom<\kk>{V}{-}\colon\LMod{H}\longrightarrow\LMod{H}
  \]
  preserves (projective-)injective $H$-modules.
\end{corollary}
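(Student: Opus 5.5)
We derive both claims from \Cref{lemma:LR-preserving-proj-inj} by producing suitable adjunctions whose other adjoint is exact, and then using that $\LMod{H}$ is Frobenius, so projective and injective $H$-modules coincide.

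For the first claim, the functor $-\otimes V$ on $\LMod{H}$ has a right adjoint given by an internal $\operatorname{Hom}$ out of $V$ (\Cref{not:internal_hom_LMod_H}). Its underlying vector space is $\Hom<\kk>{V}{Z}$. Since $\kk$ is a field and the forgetful functor $\LMod{H}\to\Mod{\kk}$ is exact and reflects exactness, this right adjoint is exact. By \Cref{lemma:LR-preserving-proj-inj}, the left adjoint $-\otimes V$ therefore preserves projective objects. As $\LMod{H}$ is a Frobenius abelian category, projective and injective $H$-modules coincide, so $-\otimes V$ also preserves projective-injective modules.

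For the second claim, the functor $\Hom<\kk>{V}{-}$ (with its appropriate $H$-module structure) is right adjoint to the functor $-\otimes V$, or to $V\otimes-$ depending on which internal $\operatorname{Hom}$ is meant. Either left adjoint has underlying functor $-\otimes_\kk V$ (respectively $V\otimes_\kk-$), which is exact. The dual half of \Cref{lemma:LR-preserving-proj-inj} then shows that $\Hom<\kk>{V}{-}$ preserves injective objects. By the Frobenius property, these are exactly the projective-injective modules.

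The only point needing care is matching the left and right internal $\operatorname{Hom}$ objects of \Cref{not:internal_hom_LMod_H} with the correct tensor functors, since $H$ need not be cocommutative. The argument is symmetric, however: in both cases exactness is checked on underlying vector spaces. The statement therefore holds for both versions of the internal $\operatorname{Hom}$, and for tensoring on either side, for instance by passing to $\LMod{H}<rev>$, which corresponds to $H^\cop$.
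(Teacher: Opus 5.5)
Your proposal is correct and is the argument the paper intends; the corollary is stated without proof, directly after \Cref{lemma:LR-preserving-proj-inj}. You apply that lemma to the adjunction between $-\otimes V$ (or $V\otimes-$) and the corresponding internal $\operatorname{Hom}$, noting that both functors are exact because exactness is detected on underlying vector spaces, and you then use that $\LMod{H}$ is Frobenius to identify projective and injective $H$-modules.
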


\subsection{The stable $\infty$-category of $H$-modules}
\label{subsec:StModH}

We recall the model-categorical approach to constructing the canonical
$\infty$-categorical enhancement of the large stable category of $H$-modules.

\begin{theorem}[{\cite[Theorem~2.2.12]{Hov99} and \cite[Theorem~9.5]{Hov02}}]
  \label{thm:Hovey-H-Mod}
  The triple
  \[
    (\LMod{H},\Inj{H},\LMod{H}),
  \]
  where $\Inj{H}\subseteq\LMod{H}$ is the full subcategory spanned by the
  injective $H$-modules, determines a combinatorial monoidal hereditary abelian
  model category structure on $\LMod{H}$ whose class $\Weq[st]$ of weak
  equivalences is that of stable isomorphisms. In particular, every $H$-module
  is cofibrant and fibrant in this model structure.
\end{theorem}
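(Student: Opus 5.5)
The plan is to check, one at a time, that the triple $(\LMod{H},\Inj{H},\LMod{H})$ satisfies the axioms of an abelian model structure (Hovey's correspondence with complete cotorsion pairs), and then to verify the monoidal and combinatorial conditions. Every object is cofibrant and every object is fibrant, so the acyclic objects must be the injective $H$-modules.

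\textbf{Cotorsion pairs.} By \cite[Theorem~2.2]{Hov02} we need two complete cotorsion pairs, $(\C,\W\cap\F)$ and $(\C\cap\W,\F)$. Here both are $(\LMod{H},\Inj{H})$ and $(\Inj{H},\LMod{H})$.
\begin{enumerate}
\item The pair $(\LMod{H},\Inj{H})$ is a cotorsion pair, since $\Ext<H>[1]{X}{Y}=0$ for all $X$ exactly when $Y$ is injective. It is complete because $\LMod{H}$ has enough injectives.
\item The pair $(\Inj{H},\LMod{H})$ is a cotorsion pair because $H$ is Frobenius, so injective and projective modules coincide (this uses \cite[Theorem~2.1.3]{Mon93}). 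It is complete because there are enough projectives.
\item The class $\Inj{H}$ is thick, i.e.\ closed under extensions, retracts and two-out-of-three in short exact sequences. Closure under retracts and extensions is clear. For two-out-of-three: a sequence $0\to I\to J\to K\to0$ with $I,J$ injective splits, so $K$ is injective; dually, $J,K$ projective forces $I$ to be injective.
\end{enumerate}
Hovey's theorem then gives the abelian model structure. By construction its cofibrations are the monomorphisms and its fibrations are the epimorphisms. A map is a weak equivalence iff it factors as a monomorphism with injective cokernel followed by an epimorphism with injective kernel, and this is the same as being a stable isomorphism; one can cite \cite[Theorem~2.2.12]{Hov99} for this identification. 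The structure is hereditary by \Cref{rmk:injective-model-structures-are-hereditary}.

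\textbf{Combinatoriality.} $\LMod{H}$ is locally presentable. For cofibrant generation, take as generating cofibrations the inclusions $I\subseteq H$ of left ideals; these work by Baer's criterion, since $H$ is finite-dimensional. Take as generating trivial cofibrations the maps $0\to H$ together with the inclusions $I\to H$ for a suitable set, following Hovey's argument for Frobenius rings. Alternatively, cite \cite[Theorem~2.2.12]{Hov99} directly for the cofibrantly generated model structure and check that it agrees with the abelian one. This step, matching Hovey's explicit generators with the abelian description, is the main bookkeeping obstacle.

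\textbf{Monoidality.} Apply \Cref{thm:Hovey-injective_abelian_monoidal_model_cat}. The tensor product over $\kk$ is exact in each variable, and every object is cofibrant. It remains to check that $\W=\Inj{H}$ is a $\otimes$-ideal. Injective means projective here, and \Cref{coro:hom-injectives} shows that $-\otimes V$ preserves projectives. For $V\otimes-$ we use the analogous statement for $\LMod{H}<rev>$, that is, the Hopf algebra $H^\cop$: $V\otimes P\cong P\otimes_{\cop}V$. Concretely, $V\otimes H\cong H^{\oplus\dim V}$ and $H\otimes V\cong H^{\oplus\dim V}$, via the antipode twist.
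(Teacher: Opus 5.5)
Your proposal is correct and follows the paper's route: existence and combinatoriality come from \cite[Theorem~2.2.12]{Hov99}, and monoidality comes from \Cref{thm:Hovey-injective_abelian_monoidal_model_cat} together with \Cref{coro:hom-injectives}. Matching Hovey's structure with the abelian one is not really an obstacle, since a model structure is determined by its cofibrations (here the monomorphisms) and fibrations (here the epimorphisms), and your cotorsion-pair check and your treatment of $V\otimes-$ via $H^{\mathrm{cop}}$ make explicit what the paper leaves implicit; the only slip is in an aside you do not rely on: the generating trivial cofibrations can be taken to be just $0\to H$, because an ideal inclusion $I\to H$ is a trivial cofibration only when $H/I$ is projective, so such inclusions cannot be added in general.
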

\begin{proof}
  The existence of the claimed combinatorial abelian model category structure on
  $\LMod{H}$ is a special case of~\cite[Theorem~2.2.12]{Hov99}. That this model
  structure is monoidal is shown in~\cite[Proposition~9.5]{Hov02} in the special
  case of group algebras of finite groups over a principal ideal domain, but the
  proof applies in this setting and is, in fact, simpler since we work over a
  field. Indeed, according to
  \Cref{thm:Hovey-injective_abelian_monoidal_model_cat}, it is enough to show
  that the subcategory $\Inj{H}\subseteq\LMod{H}$ is a $\otimes$-ideal, which
  follows from \Cref{coro:hom-injectives}.
\end{proof}

\Cref{thm:Hovey-H-Mod,thm:MLoc-presentably-stable-monoidal} permit us to make
the following definition.

\begin{definition}
  \label{def:StModH}
  The \emph{stable $\infty$-category of $H$-modules} is the presentably monoidal
  stable $\infty$-category
  \[
    \StLMod{H}\coloneqq\MLoc[\Weq[st]]{\LMod{H}}=(\LMod{H})[\Weq[st]^{-1}].
  \]
  There is a canonical equivalence of triangulated categories
  \[
    \Ho{\StLMod{H}}\simeq\LModu{H},
  \]
  where the right-hand side denotes the large stable category of $H$-modules. In
  particular, the $\infty$-category $\StLMod{H}$ is compactly generated by
  \cite[Corollary~5.4]{Kra05} and the tensor product restricts to the full
  subcategory
  \[
    \stLmod{H}\coloneqq\StLMod{H}^\omega\subseteq\StLMod{H}
  \]
  spanned by the compact objects, which we may identify with the
  finite-dimensional $H$-modules. Notice also that $\stLmod{H}$ is the thick
  subcategory of $\StLMod{H}$ generated by the simple $H$-modules.
\end{definition}

We also need the following alternative construction of the stable
$\infty$-category $\StLMod{H}$. As usual, we denote the category of cochain
complexes of $H$-modules by $\Ch{\LMod{H}}$. This is a monoidal category with
the Day convolution tensor product~\cite{Day70}
\[
  \textstyle(X\otimes Y)^k\coloneqq\bigoplus_{i+j=k}X^i\otimes Y^j,\qquad
  X,Y\in\Ch{\LMod{H}},
\]
which is symmetric if and only if the Hopf algebra $H$ is cocommutative. This
monoidal structure is biclosed: The right internal $\operatorname{Hom}$ object
is given by the graded left $H$-module (see~\Cref{not:internal_hom_LMod_H})
\[
  \textstyle\hom<\kk>[r]{Y}{Z}^i\coloneqq\prod_{j\in\ZZ}\Hom<\kk>[r]{Y^{j}}{Z^{i+j}},\qquad
  Y,Z\in\Ch{\LMod{H}},
\]
that is endowed with the differential
\[
  \partial(f)\coloneqq d_Y\circ f-(-1)^if\circ d_X,\qquad f\in\hom<k>[r]{Y}{Z}^i.
\]
The definition of the left internal $\operatorname{Hom}$ object is entirely
analogous. The biclosed monoidal structure descends to the homotopy category
of cochain complexes of $H$-modules since it is given by $\kk$-linear functors.

We also let
$\Ch{\Inj{H}}$ be the category of cochain complexes of injective $H$-modules and
$\Ch<ac>{\Inj{H}}$ its full subcategory of acyclic complexes. In the proof
below, we use the following observation.

\begin{lemma}
  \label{lemma:internal_hom_preserves_acyclics_inj}
  If $Y\in\Ch{\LMod{H}}$ and $Z\in\Ch<ac>{\LMod{H}}$, then
  $\hom<\kk>[r]{Y}{Z}\in\Ch<ac>{\LMod{H}}$. Similarly, if
  $Z\in\Ch<ac>{\Inj{H}}$, then $\hom<\kk>[r]{Y}{Z}\in\Ch<ac>{\Inj{H}}$. The
  analogous statements for left internal $\operatorname{Hom}$ objects also hold.
\end{lemma}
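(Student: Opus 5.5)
The acyclicity claim is a statement about underlying complexes of vector spaces, and the injectivity claim is a degreewise statement about $H$-modules. I will prove them separately.

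\emph{Acyclicity.} The forgetful functor $\LMod{H}\to\Mod{\kk}$ is faithful and exact. It also commutes with the formation of $\hom<\kk>[r]{Y}{Z}$: the underlying graded vector space of $\hom<\kk>[r]{Y}{Z}$ is $\prod_{j}\Hom<\kk>{Y^{j}}{Z^{i+j}}$, and the differential is the usual one. So it is enough to show that $\hom<\kk>{Y}{Z}$ is acyclic as a complex of vector spaces whenever $Z$ is acyclic. Its cohomology in degree $i$ is $\Hom<\KCh{\Mod{\kk}}>{Y}{\Sigma^i Z}$, the group of homotopy classes of chain maps. Over a field an acyclic complex is contractible: every short exact sequence $0\to B^n\to Z^n\to B^{n+1}\to0$ splits, and the splittings assemble into a contracting homotopy. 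Since $\Sigma^iZ$ is contractible, every chain map into it is null-homotopic, so the cohomology vanishes. The left internal $\operatorname{Hom}$ has the same underlying complex of vector spaces, so the same argument applies to it.

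\emph{Injectivity.} Let $Z\in\Ch<ac>{\Inj{H}}$. By the first part, $\hom<\kk>[r]{Y}{Z}$ is acyclic, so it remains to see that each term $\prod_j\Hom<\kk>[r]{Y^j}{Z^{i+j}}$ is an injective $H$-module. By \Cref{coro:hom-injectives}, each factor $\Hom<\kk>[r]{Y^j}{Z^{i+j}}$ is injective, because $Z^{i+j}$ is injective. The argument of that corollary applies to either internal $\operatorname{Hom}$, since each is right adjoint to an exact tensor functor and \Cref{lemma:LR-preserving-proj-inj} then applies. Finally, products of injective objects are injective in any abelian category with products. The left internal $\operatorname{Hom}$ is handled identically.

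\emph{Main obstacle.} The only point that needs care is making sure that both internal $\operatorname{Hom}$ functors preserve injectives. One must check that the right adjoint in \Cref{lemma:LR-preserving-proj-inj} is applied to the correct exact left adjoint, namely $-\otimes V$ or $V\otimes-$ as appropriate, and both of these are exact because $\otimes=\otimes_\kk$. Everything else is formal.
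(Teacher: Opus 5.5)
Your proof is correct and follows the paper's route. Over a field the acyclic complex $Z$ is contractible, so the Hom complex is acyclic, and \Cref{coro:hom-injectives} gives degreewise injectivity. You also make explicit two points the paper leaves tacit: each degree is a product of injective $H$-modules, hence injective, and both internal Homs preserve injectives because $-\otimes V$ and $V\otimes-$ are exact.
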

\begin{proof}
  If $Z$ is acyclic, then it is contractible as a complex of vector spaces.
  Hence $\hom<\kk>[r]{Y}{Z}$ is also acyclic. If $Z$ is a
  complex of injective $H$-modules, \Cref{coro:hom-injectives} implies that
  $\hom<\kk>[r]{Y}{Z}\in\Ch{\Inj{H}}$.
\end{proof}

\begin{theorem}[Becker]
  \label{thm:Becker-icosing}
  The following statements hold:
  \begin{enumerate}
  \item \cite[Proposition~2.2.1]{Bec14} The category $\Ch{\LMod{H}}$ is endowed
    with a combinatorial monoidal hereditary abelian model category structure
    determined by the triple
    \[
      (\Ch{\Mod{H}},\W[co,sing],\Ch<ac>{\Inj{H}}),
    \]
    where
    \begin{center}
      $\W[co,sing]\coloneqq\set{X\in\Ch{\LMod{H}}}[\forall
      Y\in\Ch<ac>{\Inj{H}},\ \Ext<H>[1]{X}{Y}=0]$
    \end{center}
    is the full subcategory of \emph{singular coacyclic} complexes. In
    particular, every chain complex of $H$-modules is cofibrant in this model
    structure. We denote the class of weak equivalences of this model structure
    by $\Weq[co,sing]$.
  \item\cite[Proposition~3.1.5]{Bec14} The functor of $0$-cocycles
    \[
      Z^0\colon\Ch{\LMod{H}}\longrightarrow\LMod{H}
    \]
    is a right Quillen equivalence, where the target is endowed with the model
    structure from \Cref{thm:Hovey-H-Mod}. Finally, the induced equivalence of
    underlying $\infty$-categories
    \[
      \MLoc[\Weq[co,sing]]{\Ch{\LMod{H}}}\stackrel{\simeq}{\longrightarrow}\StLMod{H}
    \]
    is monoidal.
  \end{enumerate}
\end{theorem}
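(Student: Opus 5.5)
The plan is to treat the two parts separately. For each part, I take the existence statements from~\cite{Bec14} as given. The genuinely new content is the monoidality claims: that the model structure in~(1) is monoidal, and that the induced equivalence in~(2) is monoidal.

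For~(1), the existence of the combinatorial hereditary abelian model structure determined by $(\Ch{\Mod{H}},\W[co,sing],\Ch<ac>{\Inj{H}})$ is~\cite[Proposition~2.2.1]{Bec14}. Heredity is automatic by \Cref{rmk:injective-model-structures-are-hereditary}, since every object is cofibrant. To get monoidality I would invoke \Cref{thm:Hovey-injective_abelian_monoidal_model_cat}. The Day convolution tensor product is exact in each variable, because it is computed degreewise by the exact tensor product of $H$-modules and finite direct sums. So it suffices to show that $\W[co,sing]$ is a $\otimes$-ideal.

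Let $X\in\W[co,sing]$, $V\in\Ch{\LMod{H}}$ and $Y\in\Ch<ac>{\Inj{H}}$. The functors $-\otimes V$ and $\hom<\kk>[r]{V}{-}$ form an adjoint pair of exact functors. An adjoint pair of exact functors induces an isomorphism on Yoneda $\operatorname{Ext}^1$: apply the adjunction to extensions and use that both unit and counit are compatible with short exact sequences. This gives
\[
  \Ext<H>[1]{X\otimes V}{Y}\cong\Ext<H>[1]{X}{\hom<\kk>[r]{V}{Y}}.
\]
By \Cref{lemma:internal_hom_preserves_acyclics_inj}, $\hom<\kk>[r]{V}{Y}\in\Ch<ac>{\Inj{H}}$, so the right-hand side vanishes. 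The case $V\otimes X$ is identical, using the left internal $\operatorname{Hom}$. The step requiring the most care is this Ext-adjunction. If needed, I would verify it concretely by showing that the extension groups are computed by homotopy classes of maps into shifts, using the degreewise-split exact structure.

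For~(2), the right Quillen equivalence is~\cite[Proposition~3.1.5]{Bec14}, so it remains to prove monoidality. I would use the stalk functor $S\colon\LMod{H}\to\Ch{\LMod{H}}$, which places a module in degree $0$. The functor $S$ is strong monoidal and satisfies $Z^0\circ S=\id$.

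I claim $S$ preserves weak equivalences. Since every stable isomorphism factors as a monomorphism with injective cokernel followed by an epimorphism with injective kernel, and $S$ is exact, it suffices to show that $S(I)\in\W[co,sing]$ for $I$ projective-injective. An extension of $Y\in\Ch<ac>{\Inj{H}}$ by $S(I)$ is determined by a map $I\to Z^1Y$. Since $Y$ is acyclic, $Y^0\to Z^1Y$ is an epimorphism, and projectivity of $I$ gives a lift. This lift provides a splitting, so $\Ext<H>[1]{S(I)}{Y}=0$.

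Hence, by the universal property in \Cref{thm:MLoc-presentably-monoidal}, $S$ induces a monoidal functor $\StLMod{H}\to\MLoc[\Weq[co,sing]]{\Ch{\LMod{H}}}$. Since every object is fibrant on both sides, the right Quillen functor $Z^0$ preserves weak equivalences, so the induced equivalence is simply $Z^0$ applied pointwise. The relation $Z^0\circ S=\id$ then shows that the induced $S$ is a quasi-inverse of this equivalence. A quasi-inverse of a monoidal equivalence is monoidal, so the equivalence $\MLoc[\Weq[co,sing]]{\Ch{\LMod{H}}}\to\StLMod{H}$ inherits a monoidal structure, as claimed.
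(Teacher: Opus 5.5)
Your part (1) is sound and is essentially the paper's argument: you reduce via \Cref{thm:Hovey-injective_abelian_monoidal_model_cat} to showing that $\W[co,sing]$ is a $\otimes$-ideal, and you move the tensor factor across the adjunction using \Cref{lemma:internal_hom_preserves_acyclics_inj}. The paper routes this through $\Ext<H>[1]{X}{Z}\cong[X,Z[1]]$, which is valid because all targets are degreewise injective. Your Yoneda-Ext isomorphism for an adjoint pair of exact functors works equally well. (Minor point: the direct sums in the Day convolution are infinite in general, but they are still exact.)

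The problem is in part (2), in the sentence claiming that since every object is fibrant on both sides, $Z^0$ preserves weak equivalences. This is false. By part (1) itself, the fibrant objects of Becker's model structure are exactly the complexes in $\Ch<ac>{\Inj{H}}$, and $Z^0$ does not preserve weak equivalences. Here is a counterexample. Let $M$ be a non-projective $H$-module, which exists whenever $H$ is not semisimple. Let $C$ be the complex with $C^{-1}=C^0=M$, differential $\id$, and zero elsewhere. Then $C$ is contractible, so $\Ext<H>[1]{C}{Y}\cong[C,Y[1]]=0$ for every $Y\in\Ch<ac>{\Inj{H}}$. Hence $C\in\W[co,sing]$ and $0\to C$ lies in $\Weq[co,sing]$. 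But $Z^0(C)=M$, so $Z^0(0\to C)$ is not a stable isomorphism. Consequently the induced equivalence is the right derived functor $\mathbb{R}Z^0$, computed after fibrant replacement. Since $S(M)$ is not fibrant for $M\neq 0$, the identity $Z^0\circ S=\id$ says nothing about $\mathbb{R}Z^0\circ S$. So your conclusion that the functor induced by $S$ is a quasi-inverse is not justified as written.

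The fix is the paper's argument. Because $S\dashv Z^0$ is a Quillen equivalence, the induced functors $\mathbb{L}S\dashv\mathbb{R}Z^0$ on underlying $\infty$-categories form an adjoint equivalence. (An adjunction of $\infty$-categories is an equivalence as soon as its unit and counit are invertible in the homotopy categories.) Every object of $\LMod{H}$ is cofibrant, so $\mathbb{L}S$ needs no cofibrant replacement. It is exactly the functor $\StLMod{H}\to\MLoc[\Weq[co,sing]]{\Ch{\LMod{H}}}$ you obtained from the universal property, which is monoidal because $S$ is strong monoidal. Hence it is the inverse equivalence, and $\mathbb{R}Z^0$ inherits a monoidal structure. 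Your hand computation that $S(I)\in\W[co,sing]$ for projective-injective $I$ is correct but unnecessary. Since $S$ is left Quillen and all objects are cofibrant, Ken Brown's lemma already shows that $S$ preserves weak equivalences.
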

\begin{proof}
  The only statements that are not contained in~\cite{Bec14} are those
  concerning the monoidal structure. In view
  of~\Cref{thm:Hovey-injective_abelian_monoidal_model_cat}, in order to show
  that the model structure is monoidal, it is enough to show that the class
  $\W[co,sing]\subseteq\Ch{\LMod{H}}$ is a $\otimes$-ideal. Indeed, let
  $X\in\W[co,sing]$, $Y\in\Ch{\Mod{R}}$, and $Z\in\Ch<ac>{\Inj{H}}$. Then,
  \begin{align*}
    \Ext<H>[1]{X\otimes Y}{Z}&\cong[X\otimes
                               Y,Z[1]]\\&\cong[X,\hom<\kk>[r]{Y}{Z}[1]]\\&\cong\Ext<H>[1]{X}{\hom<\kk>[r]{Y}{Z}}\stackrel{!}{=}0,
  \end{align*}
  where $[X,Y]$ denotes the vector space of homotopy classes of morphisms of
  complexes of $H$-modules. The required vanishing condition then follows from
  the assumption $X\in\W[co,sing]$ together with the fact that
  $\hom<\kk>[r]{Y}{Z}$ is an acyclic complex of injective $H$-modules
  (\Cref{lemma:internal_hom_preserves_acyclics_inj}). The case in which the
  roles of $X$ and $Y$ are exchanged is dealt with analogously, using the left
  internal $\operatorname{Hom}$ object. That the induced equivalence
  \[
    \MLoc[\Weq[co,sing]]{\Ch{\LMod{H}}}\stackrel{\simeq}{\longrightarrow}\StLMod{H}
  \]
  is monoidal follows immediately from the fact that the left adjoint to the
  functor $X\mapsto Z^0(X)$, namely the inclusion
  $\LMod{H}\hookrightarrow\Ch{\LMod{H}}$ into the complexes concentrated in
  degree $0$, is clearly monoidal, and hence it is monoidal left Quillen
  equivalence (recall that all objects in $\Ch{\LMod{H}}$ and $\LMod{H}$ are
  cofibrant with respect to the model structures being considered).
\end{proof}

\begin{notation}
  \label{not:cosing}
  We write $\Ch{\LMod{H}}<co,sing>$ to indicate that we consider the category
  $\Ch{\LMod{H}}$ as a model category equipped with the model structure
  from~\Cref{thm:Becker-icosing}.
\end{notation}

\begin{variant}
  \label{variant:graded}
  Following the discussion in~\cite[Remark~1.3.13]{Bec14}, in
  \Cref{thm:Becker-icosing}, we may replace the category of vector spaces by the
  category of $\ZZ$-graded vector spaces and consider a finite-dimensional Hopf
  algebra therein (compare also with~\cite[Remark~3.7]{Qi14}).
\end{variant}

\subsection{The derived $\infty$-category of $H$}

We now recall the model-categorical approach to constructing the derived
$\infty$-category of $H$.

\begin{proposition}[{\cite[Theorem~2.3.13]{Hov99}, \cite[Example~3.2]{Hov02}}]
  \label{prop:injective-model-str-Ch}
  The category $\Ch{\LMod{H}}$ is endowed with a combinatorial monoidal
  hereditary abelian model structure determined by the triple
  \[
    (\Ch{\LMod{H}},\Ch<ac>{\LMod{H}},\Ch<fib>{\LMod{H}}).
  \]
  Here, $\Ch<ac>{\LMod{H}}\subseteq\Ch{\LMod{H}}$ is the full subcategory
  spanned by the acyclic complexes and
  \[
    \Ch<fib>{\LMod{H}}\coloneqq\set{Y\in\Ch{\LMod{H}}}[\forall
    X\in\Ch<ac>{\LMod{H}},\ \Ext<H>[1]{X}{Y}=0]
  \]
  is the full subcategory of \emph{DG injective complexes}. In particular, every
  chain complex of $H$-modules is cofibrant in this model structure. The class
  of weak equivalences of this model structure is the class $\Weq[qis]$ of
  quasi-isomorphisms.
\end{proposition}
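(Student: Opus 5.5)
The existence of the abelian model structure is the standard injective model structure on complexes over a Grothendieck category, which I take from \cite[Theorem~2.3.13]{Hov99} and \cite[Example~3.2]{Hov02}. Concretely, $\Ch{\LMod{H}}$ is a Grothendieck abelian category. The pair $(\Ch<ac>{\LMod{H}},\Ch<fib>{\LMod{H}})$ is a complete cotorsion pair, cogenerated by a set of acyclic complexes of bounded cardinality. Together with the trivially complete cotorsion pair $(\Ch{\LMod{H}},\Ch<ac>{\LMod{H}}\cap\Ch<fib>{\LMod{H}})$, Hovey's correspondence \cite[Theorem~2.2]{Hov02} yields an abelian model structure determined by the stated triple. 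Its weak equivalences are the quasi-isomorphisms, and every object is cofibrant.

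The injective cofibrations are the monomorphisms, which are generated by a set. Combined with presentability of $\Ch{\LMod{H}}$, this gives combinatoriality. Hereditarity is then immediate from \Cref{rmk:injective-model-structures-are-hereditary}, since every object is cofibrant.

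The new content is monoidality. I would apply \Cref{thm:Hovey-injective_abelian_monoidal_model_cat}. The Day convolution tensor product is exact in each variable, because tensoring over the field $\kk$ is exact degreewise, so it remains to show that $\Ch<ac>{\LMod{H}}$ is a $\otimes$-ideal. Let $X$ be acyclic and $Y$ arbitrary. The underlying complex of vector spaces of $X$ is contractible, since over a field every acyclic complex is contractible. The contracting homotopy $s$ on $X$ is $\kk$-linear but not necessarily $H$-linear. However, $s\otimes\id[Y]$ is a $\kk$-linear contracting homotopy of the underlying complex of $X\otimes Y$. Since acyclicity is detected on underlying complexes of vector spaces, $X\otimes Y$ is acyclic. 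The case $Y\otimes X$ is symmetric.

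Alternatively, one can use a Künneth argument over $\kk$. Either way, the argument mirrors the proof of \Cref{lemma:internal_hom_preserves_acyclics_inj}.

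There is essentially no obstacle. The only subtle point is that exactness and the $\otimes$-ideal property must be checked for the non-symmetric tensor product on both sides, since $H$ need not be cocommutative. Both checks reduce to the underlying vector spaces, where the tensor product is symmetric up to the standard Koszul isomorphism.
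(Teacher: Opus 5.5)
Your proposal is correct and follows the same route as the paper. Existence is quoted from Hovey, and monoidality comes from \Cref{thm:Hovey-injective_abelian_monoidal_model_cat}, using that acyclic complexes form a $\otimes$-ideal because tensoring over $\kk$ with an acyclic, hence contractible, complex preserves acyclicity. Your explicit homotopy $s\otimes\id[Y]$ just spells out the paper's one-line justification, and states the needed ideal property more precisely than the paper's phrase ``the tensor product of acyclic complexes is again acyclic''. Your remarks on combinatoriality and hereditarity make explicit what the paper leaves to the citations and \Cref{rmk:injective-model-structures-are-hereditary}.
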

\begin{proof}
  That the model structure on $\Ch{\LMod{H}}$ is monoidal follows immediately
  from \Cref{thm:Hovey-injective_abelian_monoidal_model_cat} and the fact that
  the tensor product of acyclic complexes of $\kk$-modules is again acyclic.
\end{proof}

\begin{notation}
  \label{not:inj}
  We write $\Ch{\LMod{H}}<inj>$ to indicate that we consider the category
  $\Ch{\LMod{H}}$ as a model category equipped with the model structure
  from~\Cref{prop:injective-model-str-Ch}.
\end{notation}

\Cref{prop:injective-model-str-Ch} and
\Cref{thm:MLoc-presentably-stable-monoidal} permit us to make the following
definition.

\begin{definition}
  The \emph{derived $\infty$-category} of $H$ is the presentably monoidal stable
  $\infty$-category
  \[
    \DerCat{\LMod{H}}\coloneqq\MLoc[\Weq[qis]]{\Ch{\LMod{H}}}=\Ch{\LMod{H}}[\Weq[qis]^{-1}].
  \]
  The homotopy category $\Ho{\DerCat{\LMod{H}}}$ is the ordinary derived
  category of $H$-modules. In particular, the $\infty$-category
  $\DerCat{\LMod{H}}$ is compactly generated and the tensor product restricts to
  the subcategory
  \[
    \perf{H}\coloneqq\DerCat{\LMod{H}}^\omega\subseteq\DerCat{\LMod{H}}
  \]
  spanned by the compact objects, that we may identify with bounded complexes of
  finite-dimensional projective $H$-modules. Beware, however, that the unit
  object ${\kk\in\DerCat{\LMod{H}}}$ is not compact unless $H$ is semisimple.
  Hence, in general, $\perf{H}$ is only a non-unital (!) monoidal
  $\infty$-category.
\end{definition}

\begin{remark}
  \label{rmk:forgetful_DH}
  The (monoidal) forgetful functor
  \[
    U\colon\Ch{\LMod{H}}<inj>\longrightarrow\Ch{\Mod{\kk}}<inj>
  \]
  is a monoidal left Quillen functor. Indeed, since the functor $U$ is exact and
  every object of the domain and of the codomain is cofibrant, it suffices to
  observe that $U$ sends acyclic complexes of $H$-modules to acyclic complexes
  of vector spaces. Therefore, the induced functor
  between derived $\infty$-categories
  \[
    \DerCat{\LMod{H}}\longrightarrow\DerCat{\Mod{\kk}}
  \]
  is also monoidal.
\end{remark}

\subsection{The coderived $\infty$-category of $H$}

We now recall the model-categorical approach to constructing the
$\infty$-categorical enhancement of the homotopy category of cochain complexes
of injective $H$-modules.

\begin{proposition}[{\cite[Propositions~1.3.6 and 1.3.8]{Bec14}}]
  \label{prop:Becker-ico}
  The category $\Ch{\LMod{H}}$ admits a combinatorial monoidal hereditary
  abelian model structure determined by the triple
  $(\Ch{\LMod{H}},\W[co],\Ch{\Inj{H}})$, where
  \begin{align*}
    \W[co]&\coloneqq\set{X\in\Ch{\LMod{H}}}[\forall Y\in\Ch{\Inj{H}},\ \Ext<H>[1]{X}{Y}=0]
  \end{align*}
  is the class of \emph{coacyclic complexes}. In particular, every complex of
  $H$-modules is cofibrant in this model structure. We denote the class of weak
  equivalences of this model structure by $\Weq[co]$.
\end{proposition}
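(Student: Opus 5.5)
The existence of the abelian model structure is cited from Becker, so the part needing an argument is that it is \emph{combinatorial}, \emph{hereditary} and, above all, \emph{monoidal}. I will follow the pattern of the proof of \Cref{thm:Becker-icosing}.

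Hereditarity is immediate from \Cref{rmk:injective-model-structures-are-hereditary}, since every complex is cofibrant. Combinatoriality is part of Becker's construction: the cotorsion pair $(\W[co],\Ch{\Inj{H}})$ is cogenerated by a set, because $\LMod{H}$ is locally noetherian and $\Ch{\LMod{H}}$ is a Grothendieck category. The tensor product on $\Ch{\LMod{H}}$ is exact in each variable, being computed over the field $\kk$. So by \Cref{thm:Hovey-injective_abelian_monoidal_model_cat} it suffices to show that $\W[co]$ is a $\otimes$-ideal.

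Let $X\in\W[co]$, $Y\in\Ch{\LMod{H}}$ and $Z\in\Ch{\Inj{H}}$. I will use the identification $\Ext<H>[1]{X}{Z}\cong[X,Z[1]]$, valid whenever $Z$ is a complex of injectives. It holds because the relevant degreewise-split extensions are exactly those classified by homotopy classes of maps. We need $\Ext^1$ over the abelian category of complexes, and a degreewise-injective target forces every extension to split degreewise. Adjunction then gives
\[
  \Ext<H>[1]{X\otimes Y}{Z}\cong[X\otimes Y,Z[1]]\cong[X,\hom<\kk>[r]{Y}{Z}[1]]\cong\Ext<H>[1]{X}{\hom<\kk>[r]{Y}{Z}}.
\]
The middle step uses that the internal Hom adjunction is $\kk$-linear and hence descends to homotopy classes. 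The last step requires $\hom<\kk>[r]{Y}{Z}$ to be a complex of injective $H$-modules. Degreewise it is a product of objects $\Hom<\kk>[r]{Y^j}{Z^{i+j}}$, each injective by \Cref{coro:hom-injectives}, and products of injectives are injective. The right-hand side therefore vanishes because $X\in\W[co]$. The case with $X$ in the second factor is handled symmetrically with the left internal Hom.

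The main obstacle is making the identification $\Ext^1\cong[-,-[1]]$ rigorous in this setting; this was used implicitly in \Cref{thm:Becker-icosing}. I would check it directly: an extension $0\to Z\to E\to X\to 0$ in $\Ch{\LMod{H}}$ splits in each degree since $Z^n$ is injective. The extension is then determined by a chain map $X\to Z[1]$, and equivalence of extensions corresponds to homotopy. The unbounded products in the internal Hom cause no trouble, because injectivity is closed under arbitrary products over the noetherian ring $H$; in fact it is closed under products over any ring.
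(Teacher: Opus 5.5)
Your proposal is correct and follows the paper's route. The paper's proof just says that monoidality is shown exactly as in \Cref{thm:Becker-icosing}: one reduces via \Cref{thm:Hovey-injective_abelian_monoidal_model_cat} to showing that $\mathcal{W}_{\mathrm{co}}$ is a two-sided $\otimes$-ideal, using $\operatorname{Ext}^1(X\otimes Y,Z)\cong[X\otimes Y,Z[1]]\cong[X,\operatorname{hom}^{r}_{\kk}(Y,Z)[1]]\cong\operatorname{Ext}^1(X,\operatorname{hom}^{r}_{\kk}(Y,Z))$, with the remaining properties taken from Becker. The only change from the singular case is that $\operatorname{hom}^{r}_{\kk}(Y,Z)$ now need only be a complex of injectives, which you correctly get from \Cref{coro:hom-injectives} and closure of injectives under products; you also spell out the degreewise-splitting justification of $\operatorname{Ext}^1\cong[-,-[1]]$ that the paper leaves implicit.
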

\begin{proof}
  The proof that this model structure is monoidal is entirely analogous to that
  of \Cref{thm:Becker-icosing}.
\end{proof}

\begin{notation}
  \label{not:co}
  We write $\Ch{\LMod{H}}<co>$ to indicate that we consider the category
  $\Ch{\LMod{H}}$ as a model category equipped with the model structure
  from~\Cref{prop:Becker-ico}.
\end{notation}

\Cref{prop:Becker-ico} and \Cref{thm:MLoc-presentably-stable-monoidal} permit us
to make the following definition, where we borrow the terminology used by
Positselski in~\cite{Pos11} to avoid the awkward terminology `homotopy
$\infty$-category.'

\begin{definition}
  The \emph{coderived $\infty$-category} of $H$ is the presentably monoidal
  stable $\infty$-category
  \[
    \KCh{\Inj{H}}\coloneqq\MLoc[\Weq[co]]{\Ch{\LMod{H}}}=\Ch{\LMod{H}}[\Weq[co]^{-1}].
  \]
  The homotopy category $\Ho{\KCh{\Inj{H}}}$ is equivalent to the ordinary
  homotopy category of cochain complexes of injective $H$-modules. In
  particular, the $\infty$-category $\KCh{\Inj{H}}$ is compactly generated and
  the homotopy category $\Ho{\KCh{\Inj{H}}^\omega}$ of the full subcategory
  \[
    \DerCat[b]{\mmod{H}}\simeq\KCh{\Inj{H}}^\omega\subseteq\KCh{\Inj{H}}
  \]
  spanned by the compact objects is equivalent to the bounded derived category
  of finite-dimensional $H$-modules~\cite[Proposition~2.3]{Kra05}. Notice that
  $\DerCat[b]{\mmod{H}}$ is generated as an idempotent-complete stable
  $\infty$-category by the simple $H$-modules. The tensor product on
  $\KCh{\Inj{H}}$ restricts to the full subcategory of compact objects, which
  contains the monoidal unit. Therefore $\KCh{\Inj{H}}^\omega$ is a monoidal
  $\infty$-category.
\end{definition}

\begin{remark}
  \label{rmk:identity_KInj}
  The (monoidal) forgetful functor
  \[
    U\colon\Ch{\LMod{H}}<co>\longrightarrow\Ch{\Mod{\kk}}<inj>
  \]
  is a monoidal left Quillen functor. Indeed, since the functor $U$ is exact and
  every object of the domain and of the codomain is cofibrant, it suffices to
  observe that $U$ sends the subcategory $\W[co]\subseteq\Ch{\LMod{H}}$ into the
  subcategory of acyclic complexes
  of vector spaces~\cite[Corollary~1.3.7]{Bec14}. Therefore, the induced functor
  between $\infty$-categories
  \[
    \KCh{\LMod{H}}\longrightarrow\DerCat{\Mod{\kk}}
  \]
  is also monoidal.
\end{remark}

\subsection{Interlude: $\kk$-linear structures}
\label{subsec:linear_structures}

The counit map $H\to\kk$ induces an exact monoidal functor
\[
  \Ch{\Mod{\kk}}\longrightarrow\Ch{\LMod{H}}
\]
that preserves contractible complexes. This functor is a monoidal left Quillen
functor with respect to the injective model structure on $\Ch{\Mod{\kk}}$
(\Cref{prop:injective-model-str-Ch}) and any of the model structures on
$\Ch{\LMod{H}}$ described in \Cref{thm:Becker-icosing} and
\Cref{prop:injective-model-str-Ch,prop:Becker-ico}. Indeed, it suffices to
observe the following facts:
\begin{itemize}
\item Every complex of vector spaces cofibrant.
\item Acyclic complexes of vector spaces are contractible.
\item The (trivial) cofibrations in $\Ch{\Mod{\kk}}$ are the monomorphisms (with
  contractible cokernel).
\item The acyclic objects (in the sense of model categories) in any of the three
  model structures on $\Ch{\Mod{H}}$ that we consider clearly contain the
  contractible complexes.
\end{itemize}
Thus, passing to underlying $\infty$-categories we obtain colimit-preserving
monoidal functors
\begin{align*}
  \DerCat{\Mod{\kk}}&\longrightarrow\StLMod{H},\\
  \DerCat{\Mod{\kk}}&\longrightarrow\KCh{\Inj{H}},\\
  \DerCat{\Mod{\kk}}&\longrightarrow\DerCat{\LMod{H}}
\end{align*}
Moreover, the first two functors restrict to exact monoidal functors
\[
  \perf{\kk}\longrightarrow\stLmod{H}\qquad\text{and}\qquad\perf{\kk}\longrightarrow\DerCat[b]{\mmod{H}}.
\]
All of these functors are symmetric monoidal when the Hopf algebra $H$ is
cocommutative. It follows from \Cref{prop:change_of_algebra} that, by
restriction along the corresponding monoidal functor, we may regard
$\StLMod{H}$, $\KCh{\Inj{H}}$ and $\DerCat{\LMod{H}}$ as
$\DerCat{\Mod{\kk}}$-linear categories. We regard $\stLmod{H}$, and
$\DerCat[b]{\mmod{H}}$ as $\perf{\kk}$-linear categories in the same way.

\begin{remark}
  \label{rmk:section_identity_DH}
  According to~\cite[Corollary~3.4.1.7]{Lur17}, there is an equivalence of
  $\infty$-categories
  \[
    \Alg{\Mod{\DerCat{\Mod{\kk}}}[\PrStL]}\stackrel{\sim}{\longrightarrow}\Alg{\PrStL}_{\DerCat{\Mod{\kk}}/},
  \]
  where $\Alg{\PrStL}_{\DerCat{\Mod{\kk}}/}$ denotes the $\infty$-category of
  algebras under $\DerCat{\Mod{\kk}}$. Thus, via the canonical
  colimit-preserving monoidal functor
  \[
    \DerCat{\Mod{\kk}}\longrightarrow\DerCat{\LMod{H}},
  \]
  we may regard
  \[
    \DerCat{\LMod{H}}\in\Alg{\Mod{\DerCat{\Mod{\kk}}}[\PrStL]}
  \]
  as an algebra object of the $\infty$-category
  $\Mod{\DerCat{\Mod{\kk}}}[\PrStL]$ of $\kk$-linear presentable stable
  $\infty$-categories. Combined with \Cref{rmk:forgetful_DH}, we conclude that the forgetful functor
  $U\colon\DerCat{\LMod{H}}\to\DerCat{\kk}$ fits into a commutative diagram of
  colimit-preserving monoidal functors
  \[
    \begin{tikzcd}[column sep=tiny]
      &\DerCat{\Mod{\kk}}\drar{\id}\dlar\\
      \DerCat{\LMod{H}}\ar{rr}[swap]{U}&&\DerCat{\Mod{\kk}}
    \end{tikzcd}
  \]
  If we identify $\DerCat{\Mod{\kk}}$ with the initial object of the
  $\infty$-category $\Alg{\PrStL}_{\DerCat{\Mod{\kk}}/}$ determined by the
  identity functor $\id\colon\DerCat{\Mod{\kk}}\to\DerCat{\Mod{\kk}}$, the
  previous commutative diagram permits us to regard $\DerCat{\LMod{H}}$ as an
  \emph{augmented} algebra object of $\Mod{\DerCat{\Mod{\kk}}}[\PrStL]$.
  Similarly, we may regard $\KCh{\Inj{H}}$ as an augmented algebra object of
  $\Mod{\DerCat{\Mod{\kk}}}[\PrStL]$, see \Cref{rmk:identity_KInj}.
\end{remark}

\subsection{Krause's recollement}

The following result is a special case of the main theorem in~\cite{Kra05}.

\begin{theorem}[Krause]
  \label{thm:Krauses-recollement}
  The following statements hold:
  \begin{enumerate}
  \item\cite[Theorem~4.2]{Kra05} There is a recollement of compactly-generated
    stable $\infty$-categories
    \begin{center}
      \begin{tikzcd}
        \StLMod{H}\ar[hookrightarrow]{r}[description]{i}&\KCh{\Inj{H}}\ar{r}[description]{p}\ar[shift
        right=0.5em]{l}[swap]{i_L}\ar[shift
        left=0.5em]{l}{i_R}&\DerCat{\LMod{H}}.\ar[shift
        right=0.5em,hook]{l}[swap]{p_L}\ar[hook',shift left=0.5em]{l}{p_R}
      \end{tikzcd}
    \end{center}
    Explicitly, this means the following:
    \begin{itemize}
    \item The are adjoint triples $i_L\dashv i\dashv i_R$ and $p_L\dashv p\dashv
      p_R$.
    \item The functors $i$, $p_L$ and $p_R$ are fully faithful.
    \item The essential image of $i$ is precisely the kernel of $p$.
    \end{itemize}
    Moreover, in the above recollement,
    \begin{itemize}
    \item the essential image of $i$ is the two-sided $\otimes$-ideal of
      $\KCh{\Inj{H}}$ spanned by the acyclic complexes of injective $H$-modules,
    \item the essential image of $p_R$ is the full subcategory of
      $\KCh{\Inj{H}}$ spanned by the $K$-injective complexes of $H$-modules in
      the sense of~\cite{Spa88}, and
    \item the essential image of $p_L$ is a two-sided $\otimes$-ideal in
      $\KCh{\Inj{H}}$.
    \end{itemize}
  \item\cite[Corollary~5.4]{Kra05} The above recollement restricts to a
    localisation sequence
    \begin{center}
      \begin{tikzcd}
        \stLmod{H}&\DerCat[b]{\mmod{H}}\ar{l}[swap]{i_L}&\perf{H}\ar[hook']{l}[swap]{p_L}
      \end{tikzcd}
    \end{center}
    between the corresponding full subcategories of compact objects. That is,
    the canonical exact functor
    \begin{center}
      $(\DerCat[b]{\mmod{H}}/\perf{H})^\flat\stackrel{\sim}{\longrightarrow}\stLmod{H}$
    \end{center}
    is an equivalence, where $\C\mapsto\C^\flat$ denotes the passage to the
    idempotent-completion.\footnote{See~\cite[Section~5.1.4]{Lur09}
      and~\cite[Corollary~1.1.3.7]{Lur17} for information on idempotent
      completions of (stable) $\infty$-categories.}
  \end{enumerate}
\end{theorem}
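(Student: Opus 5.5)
Since the statement is attributed to Krause, the plan is to transport his triangulated results to the $\infty$-categorical level using the model-categorical presentations of \Cref{sec:base_category}. The underlying localisations of model structures on a single category $\Ch{\LMod{H}}$ give all three $\infty$-categories. The classes of weak equivalences satisfy
\[
  \Weq[co]\subseteq\Weq[qis]\quad\text{and}\quad\Weq[co]\subseteq\Weq[co,sing].
\]
The first holds because coacyclic complexes are acyclic \cite[Corollary~1.3.7]{Bec14}. The second holds because acyclic complexes of injectives are in particular complexes of injectives, so $\W[co]\subseteq\W[co,sing]$. In both cases every object is cofibrant, so the identity functors
\[
  \Ch{\LMod{H}}<co>\longrightarrow\Ch{\LMod{H}}<inj>\quad\text{and}\quad\Ch{\LMod{H}}<co>\longrightarrow\Ch{\LMod{H}}<co,sing>
\]
are monoidal left Quillen functors. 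They induce colimit-preserving monoidal functors $p\colon\KCh{\Inj{H}}\to\DerCat{\LMod{H}}$ and $i_L\colon\KCh{\Inj{H}}\to\StLMod{H}$; here I use \Cref{thm:Becker-icosing} to identify the target of $i_L$. Both are Bousfield localisations: the identity right Quillen adjoints induce fully faithful right adjoints $p_R$ and $i$ on underlying $\infty$-categories. The essential images are the fibrant objects, namely DG-injective complexes, i.e.\ $K$-injective complexes of injectives, respectively acyclic complexes of injectives.

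Next I would assemble the recollement. The stable $\infty$-categories are presentable by \Cref{thm:MLoc-presentably-stable-monoidal}. Hence $p$ and $i$ preserve colimits, and $p$ and $i$ preserve limits as right adjoints. The adjoint functor theorem then supplies $p_L$ and $i_R$. The kernel of $p$ consists of the complexes of injectives that are acyclic, which is exactly the essential image of $i$. Full faithfulness of $p_L$ follows formally from that of $p_R$. At this point everything reduces to the identification of the localising subcategories, and by \Cref{thm:Gillespie} it can be checked on homotopy categories, where it is exactly \cite[Theorem~4.2]{Kra05}. The same reference gives compact generation.

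For the $\otimes$-ideal assertions I would argue as follows. The image of $i$ is a two-sided ideal by \Cref{lemma:internal_hom_preserves_acyclics_inj}, or simply because $\ker p$ is the kernel of a monoidal functor. The image of $p_L$ is the left orthogonal of $\ker p$. For $X$ in this image, $Y$ arbitrary and $Z\in\ker p$, adjunction gives $\Map{X\otimes Y}{Z}\simeq\Map{X}{\hom<\kk>[r]{Y}{Z}}$. Since $\ker p$ is closed under internal homs by \Cref{lemma:internal_hom_preserves_acyclics_inj}, this mapping space is trivial. The left internal hom handles the other side.

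Part (2) is a statement about compact objects. Their homotopy categories are identified with Krause's triangulated categories, with $\KCh{\Inj{H}}^\omega\simeq\DerCat[b]{\mmod{H}}$ and $\DerCat{\LMod{H}}^\omega\simeq\perf{H}$. It then follows from the Neeman--Thomason localisation theorem, since $p_L$ and $i_L$ preserve compacts because their right adjoints preserve colimits. The equivalence $(\DerCat[b]{\mmod{H}}/\perf{H})^\flat\simeq\stLmod{H}$ can be detected on homotopy categories \cite[Corollary~5.4]{Kra05}. The main obstacle I expect is this last step: matching the $\infty$-categorical Verdier quotient with the triangulated one. I would handle it via the fact that Verdier quotients of stable $\infty$-categories have the expected homotopy categories \cite[Section~5]{BGT13}.
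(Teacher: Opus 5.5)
Your route is essentially the paper's. The identity functors of $\Ch{\LMod{H}}$ give the Quillen adjunctions recorded in \Cref{rmk:Beckers_proof}, hence $i_L\dashv i$ and $p\dashv p_R$ with $i$ and $p_R$ fully faithful. Krause's triangulated results supply compact generation, the image of $p_L$ is handled by the same internal-Hom orthogonality argument, and part~(2) is reduced to Krause's Corollary~5.4 via Blumberg--Gepner--Tabuada's comparison of Verdier quotients. Your remark that the image of $i$ is a $\otimes$-ideal because it is the kernel of the monoidal functor $p$ is a clean alternative to the paper's appeal to \Cref{coro:hom-injectives}. Your other citation there, \Cref{lemma:internal_hom_preserves_acyclics_inj}, concerns internal Homs rather than tensor products and does not by itself give this.

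One step is misjustified. You say that $p$ and $i$ preserve limits ``as right adjoints'' and colimits by presentability, and then apply the adjoint functor theorem to get $p_L$ and $i_R$. But $p$ is a \emph{left} adjoint (of $p_R$), so nothing so far shows it preserves limits. Likewise, presentability does not make the right adjoint $i$ preserve colimits. As written, the existence of $p_L$ and $i_R$ is unsupported, and so is your later claim that $i_L$ preserves compact objects.

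The repair is formal and uses only what you did prove: the two adjunctions with $i$ and $p_R$ fully faithful, and $\ker p$ equal to the essential image of $i$. This is precisely the point of \Cref{rmk:Beckers_proof}.
\begin{itemize}
\item Set $i_R X\coloneqq\operatorname{fib}(X\to p_Rp X)$. It lies in $\ker p$ since $p$ inverts the unit, and $\Map{iZ}{p_RpX}\simeq\Map{piZ}{pX}\simeq *$ gives $i\dashv i_R$.
\item Set $p_L D\coloneqq\operatorname{fib}(p_R D\to ii_Lp_R D)$. It is left orthogonal to the image of $i$ and satisfies $p\,p_L D\simeq D$. Since $\operatorname{fib}(Y\to p_RpY)\in\ker p$, one gets $\Map{p_LD}{Y}\simeq\Map{p_LD}{p_RpY}\simeq\Map{D}{pY}$.
\end{itemize}
Alternatively, you can check the hypotheses of the adjoint functor theorem directly. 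Products of injective $H$-modules are injective and products in $\LMod{H}$ are exact, so $p$ preserves products. Coproducts of injective $H$-modules are injective because $H$ is noetherian, so $i$ preserves coproducts.
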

\begin{proof}
  In much broader generality, Krause constructs the required recollement at the
  level of the corresponding triangulated homotopy
  categories~\cite[Theorem~4.2]{Kra05}. In~\cite[Corollary~2.2.2]{Bec14}, Becker
  provides an alternative construction of Krause's recollement by leveraging
  several Quillen adjunctions between model categories, including those model
  categories that we use to construct the three stable $\infty$-categories
  involved. From this it follows that Krause's recollement lifts to the
  $\infty$-categorical level. The existence of the restricted
  localisation sequence can be verified at the level of triangulated homotopy
  categories~\cite[Proposition~5.15]{BGT13}, and hence its existence follows
  from~\cite[Corollary~5.4]{Kra05}.
  
  That the essential image of $i$ is a two-sided $\otimes$-ideal in
  $\KCh{\Inj{H}}$ follows from \Cref{coro:hom-injectives} and that the
  contractible complexes of vector spaces form a two-sided $\otimes$-ideal of
  $\Ch{\Mod{\kk}}$. Finally, the claim essential image of $p_L$ is a two-sided
  $\otimes$-ideal in $\KCh{\Inj{H}}$ follows from \Cref{coro:hom-injectives}
  and the characterisation of the essential image as the full subcategory
  spanned by all complexes $X\in\KCh{\Inj{H}}$ such that $[X,Y]=0$ for each
  acyclic complex of injective $H$-modules (in fact, we have shown this in the
  proof of~\Cref{thm:Becker-icosing}).
\end{proof}

\begin{remark}
  \label{rmk:Beckers_proof}
  The proof of~\cite[Corollary~2.2.2]{Bec14} relies, in part, on the fact that
  the identity functor of $\Ch{\LMod{H}}$ determines Quillen adjunctions as
  indicated below~\cite[Corollary~1.3.7, pp.~216--217]{Bec14}:
  \[
    \begin{tikzcd}[column sep=large]
      \Ch{\LMod{H}}<co,sing>\ar{r}[description]{\mathbb{R}\id}&\Ch{\LMod{H}}<co>\ar{r}[description]{\mathbb{L}\id}\ar[shift
      right=0.5em]{l}[swap]{\mathbb{L}\id}&\Ch{\LMod{H}}<inj>\ar[shift
      left=0.5em]{l}{\mathbb{R}\id}.
    \end{tikzcd}
  \]
  After passing to underlying $\infty$-categories, these induce the adjoint pairs
  \[
    \begin{tikzcd}
        \StLMod{H}\ar[hookrightarrow]{r}[description]{i}&\KCh{\Inj{H}}\ar{r}[description]{p}\ar[shift
        right=0.5em]{l}[swap]{i_L}&\DerCat{\LMod{H}},\ar[hook',shift left=0.5em]{l}{p_R}
      \end{tikzcd}
    \]
    which fully determine the recollement as soon as one shows that the
    essential image if $i$ is precisely the kernel of $p$.
\end{remark}

\section{Hopfological derived $\infty$-categories}
\label{sec:Hopfological_dercats}

As in \Cref{subsec:StModH}, we fix a finite-dimensional Hopf algebra $H$. We
also fix an algebra object $A\in\Alg{\LMod{H}}$, that is an associative and
unital $\kk$-algebra $A$ such that the multiplication and unit morphisms
\[
  A\otimes A\longrightarrow A\qquad\text{and}\qquad \kk\longrightarrow A
\]
are left $H$-module homomorphisms. The category of $\LMod{A}[\LMod{H}]$ of left
$A$-modules internal to the category of left $H$-modules is isomorphic to the
Grothendieck category $\Mod{\smashprod{A}{H}}$ of left modules over the
\emph{smash product algebra} $\smashprod{A}{H}$~\cite[Definition~4.1.3]{Mon93},
see for example~\cite[Exercise~7.8.32]{EGNO15} (the precise definition of the
smash product algebra does not play an explicit role in the sequel). In
particular, there is the forgetful functor
\[
  \LMod{\smashprod{A}{H}}\cong\LMod{A}[\LMod{H}]\longrightarrow\LMod{H}
\]
which admits an exact left adjoint
\[
  A\otimes-\colon\LMod{H}\longrightarrow\LMod{\smashprod{A}{H}}
\]
and exact right adjoint
\[
  \Hom<\kk>{A}{-}\colon\LMod{H}\longrightarrow\LMod{\smashprod{A}{H}}.
\]

\begin{definition}
  The \emph{derived $\infty$-category} of $\smashprod{A}{H}$ is the underlying
  $\infty$-category
  \[
    \DerCat{\LMod{\smashprod{A}{H}}}\coloneqq\MLoc[\Weq[qis]]{\Ch{\LMod{\smashprod{A}{H}}}},
  \]
  of the (injective) model category structure on $\Ch{\LMod{\smashprod{A}{H}}}$
  described in~\cite[Theorems~2.3.13]{Hov99}, whose weak equivalences
  are the quasi-isomorphisms. The homotopy category
  $\Ho{\DerCat{\LMod{\smashprod{A}{H}}}}$ is the ordinary derived category of
  $\smashprod{A}{H}$, and therefore the presentable $\infty$-category
  $\DerCat{\LMod{\smashprod{A}{H}}}$ is compactly generated. The full
  subcategory
  \[
    \perf{\smashprod{A}{H}}\coloneqq\DerCat{\LMod{\smashprod{A}{H}}}^\omega\subseteq\DerCat{\LMod{\smashprod{A}{H}}}
  \]
  of compact objects is spanned by the bounded complexes of finitely-generated
  projective ($\smashprod{A}{H}$)-modules.
\end{definition}

Recall, that if \(A\in \Alg{\LMod{H}}\) is an algebra object in \(\LMod{H}\), then it canonically defines an algebra structure on \(A[0]\in  \Ch{\LMod{H}}\). In particular, we maybe consider \(A\) as an algebra object of \(\DerCat{\LMod{H}}\) by \cref{thm:MLoc-presentably-monoidal}. We record the following result for later use.

\begin{proposition}
  \label{prop:der-cat-smash}
  There is an equivalence of $\infty$-categories
  \[
    \DerCat{\LMod{\smashprod{A}{H}}}\simeq\LMod{A}[\DerCat{\LMod{H}}]
  \]
  between the derived $\infty$-category of the smash product algebra
  $\smashprod{A}{H}$ and the $\infty$-category of left $A$-modules internal to
  the derived $\infty$-category of $H$. Consequently, the presentable
  $\infty$-category $\DerCat{\LMod{\smashprod{A}{H}}}$ is right-tensored over
  $\DerCat{\LMod{H}}$.
\end{proposition}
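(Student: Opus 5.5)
The plan is to apply the rectification result \Cref{thm:Lurie-transfer-model_structure} to the monoidal model category $\M=\Ch{\LMod{H}}<inj>$ of \Cref{prop:injective-model-str-Ch}. The algebra object is $A[0]\in\Alg{\Ch{\LMod{H}}}$, and every object of $\Ch{\LMod{H}}<inj>$ is cofibrant, so the hypothesis that $A[0]$ be cofibrant holds automatically. By \Cref{thm:Lurie-transfer-model_structure}, the category $\LMod{A[0]}[\Ch{\LMod{H}}]$ then carries the right-transferred combinatorial model structure. Its weak equivalences are the morphisms whose underlying map of complexes of $H$-modules is a quasi-isomorphism. Moreover, there is an equivalence
\[
  \MLoc{\LMod{A[0]}[\Ch{\LMod{H}}]}\simeq\LMod{A}[\DerCat{\LMod{H}}].
\]
This uses \Cref{thm:MLoc-presentably-monoidal}, which identifies the image of $A[0]$ in $\DerCat{\LMod{H}}$ with the algebra $A$.

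Next I would identify the left-hand side with $\DerCat{\LMod{\smashprod{A}{H}}}$. At the $1$-categorical level, a left $A[0]$-module in $\Ch{\LMod{H}}$ is a complex of $H$-modules with an action $A\otimes X^n\to X^n$ in each degree that is $H$-linear and compatible with the differentials. Equivalently, it is a complex of objects of $\LMod{A}[\LMod{H}]\cong\LMod{\smashprod{A}{H}}$. This gives an isomorphism of categories
\[
  \LMod{A[0]}[\Ch{\LMod{H}}]\cong\Ch{\LMod{\smashprod{A}{H}}}.
\]
A morphism of complexes of $\smashprod{A}{H}$-modules is a quasi-isomorphism if and only if its underlying morphism of complexes of $H$-modules is one, since cohomology is computed on underlying vector spaces. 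So under this isomorphism the two classes of weak equivalences coincide. The underlying $\infty$-category depends only on the weak equivalences, so the two localisations agree, and we obtain
\[
  \DerCat{\LMod{\smashprod{A}{H}}}\simeq\MLoc{\LMod{A[0]}[\Ch{\LMod{H}}]}\simeq\LMod{A}[\DerCat{\LMod{H}}].
\]
Note that the transferred model structure need not coincide with the injective model structure on $\Ch{\LMod{\smashprod{A}{H}}}$; we do not need it to.

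The final clause then follows from \Cref{thm:props_of_Mod}(1) applied to the presentably monoidal $\infty$-category $\DerCat{\LMod{H}}$: the $\infty$-category of left $A$-modules in it is right-tensored over $\DerCat{\LMod{H}}$.

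The main obstacle is the bookkeeping in the identification step. One has to check that the isomorphism with $\Ch{\LMod{\smashprod{A}{H}}}$ uses the same Day convolution tensor product with $A[0]$ as the module structure. One also has to check that \Cref{thm:Lurie-transfer-model_structure} really yields modules over the image of $A$ under the monoidal localisation functor $\Ch{\LMod{H}}\to\DerCat{\LMod{H}}$. Both checks are routine once the degreewise description of $A[0]\otimes X$, namely $(A[0]\otimes X)^n=A\otimes X^n$, is written down.
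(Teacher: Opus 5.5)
Your proposal is correct and follows essentially the same route as the paper: apply \Cref{thm:Lurie-transfer-model_structure} to the injective model structure on $\Ch{\LMod{H}}$, identify $\LMod{A}[\Ch{\LMod{H}}]$ with $\Ch{\LMod{\smashprod{A}{H}}}$ compatibly with quasi-isomorphisms, and invoke \Cref{thm:props_of_Mod} for the right-tensored structure. Your remark that only the classes of weak equivalences, and not the full model structures, need to agree is the justification that the paper's isomorphism step leaves implicit.
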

\begin{proof}
  Recall that
  \[
    \DerCat{\LMod{H}}=\MLoc[\Weq[qis]]{\Ch{\LMod{H}}},
  \]
  where the category $\Ch{\LMod{H}}$ is equipped with the model structure
  described in \Cref{prop:injective-model-str-Ch}. The required equivalence of
  $\infty$-categories is then a direct consequence of
  \Cref{thm:Lurie-transfer-model_structure}:
  \begin{align*}
    \DerCat{\LMod{\smashprod{A}{H}}}&=\MLoc[\Weq[qis]]{\Ch{\LMod{\smashprod{A}{H}}}}\\&\cong\MLoc[\Weq[qis]]{\LMod{A}[\Ch{\LMod{H}}]}\\&\simeq\LMod{A}[\MLoc[\Weq[qis]]{\Ch{\LMod{H}}}]\\&=\LMod{A}[\DerCat{\LMod{H}}].
  \end{align*}
  The fact that the presentable $\infty$-category
  $\DerCat{\LMod{\smashprod{A}{H}}}$ is right-tensored over $\DerCat{\LMod{H}}$
  follows from \Cref{thm:props_of_Mod}.
\end{proof}

\subsection{The Hopfological derived $\infty$-category of $A$}

In this section we revisit Ohara's model-categorical approach to constructing
Hopfological derived categories from an $\infty$-categorical perspective.

\begin{notation}
  \label{not:W-C-Hopfologica-der-cat}
  We let $\W\subseteq\LMod{\smashprod{A}{H}}$ be the full subcategory of objects
  whose underlying $H$-module is (projective-)injective and define
  \[
    \C\coloneqq\set{X\in\LMod{\smashprod{A}{H}}}[\forall Y\in\W,\
    \Ext<\smashprod{A}{H}>[1]{X}{Y}=0].
  \]
\end{notation}

\begin{remark}
  \label{rmk:W}
  The subcategory $\W$ is closed under direct summands and it satisfies the
  2-out-of-3 property: given a short exact sequence
  \[
    0\longrightarrow X\longrightarrow Y\longrightarrow Z\longrightarrow0
  \]
  in $\LMod{\smashprod{A}{H}}$, if two of the modules $X$, $Y$ and $Z$ lie in
  $\W$, so does the third.
\end{remark}

\begin{proposition}
  \label{prop:AH-CW-cotorsion-pair}
  There is an equality
  \[
    \W=\set{Y\in\LMod{\smashprod{A}{H}}}[\forall S\in\LMod{H}\text{: simple},\
    \Ext<\smashprod{A}{H}>[1]{A\otimes S}{Y}=0]
  \]
  of subcategories of $\LMod{\smashprod{A}{H}}$. Consequently, $(\C,\W)$ forms a
  complete cotorsion pair in $\LMod{\smashprod{A}{H}}$ in the sense
  of~\cite{Sal79} and \cite[Definition~2.3]{Hov02}.
\end{proposition}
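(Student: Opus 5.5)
The equality will follow from the adjunction between $A\otimes-$ and the forgetful functor, and completeness from the small object argument. Since $A\otimes-\colon\LMod{H}\to\LMod{\smashprod{A}{H}}$ is an exact left adjoint to the exact forgetful functor, the induced map on Ext groups is an isomorphism:
\[
  \Ext<\smashprod{A}{H}>[1]{A\otimes S}{Y}\cong\Ext<H>[1]{S}{Y}.
\]
One can see this by applying the adjunction to an injective resolution of $Y$ in $\LMod{\smashprod{A}{H}}$. Its terms stay injective over $H$ by \Cref{lemma:LR-preserving-proj-inj}, because the forgetful functor is right adjoint to the exact functor $A\otimes-$. Alternatively, use Yoneda extensions: an extension of $A\otimes S$ by $Y$ restricts along the unit $S\to A\otimes S$, and exactness of both adjoints gives the inverse. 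Thus the right-hand side of the claimed equality consists of those $Y$ whose underlying $H$-module satisfies $\Ext<H>[1]{S}{Y}=0$ for every simple $S$.

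Next we show that this condition is equivalent to injectivity over $H$. Since $H$ is finite-dimensional, the simple modules are finitely many and every finite-dimensional module has finite length. By dévissage, the vanishing of $\Ext<H>[1]{S}{Y}$ for all simples $S$ gives $\Ext<H>[1]{M}{Y}=0$ for all finite-dimensional $M$, and in particular for all cyclic modules $H/I$ with $I$ a left ideal. Baer's criterion then shows that $Y$ is injective over $H$. Since $H$ is Frobenius, injective and projective-injective agree, which gives $Y\in\W$. The converse is immediate. This proves the equality.

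For the cotorsion pair, we use that $\C={}^{\perp}\W$ by definition, so $(\C,\W)$ is the cotorsion pair cogenerated by $\W$. The equality just proved shows $\W=\mathcal{S}^{\perp}$ for the set $\mathcal{S}=\{A\otimes S\}$, where $S$ runs over representatives of the simple $H$-modules. So $(\C,\W)=({}^\perp(\mathcal{S}^\perp),\mathcal{S}^\perp)$ is the cotorsion pair generated by a set. In the Grothendieck category $\LMod{\smashprod{A}{H}}$ such a pair is complete, by the Eklof--Trlifaj theorem, or by Hovey's small object argument version~\cite{Hov02}. Here one should add the generator $\smashprod{A}{H}$ to $\mathcal{S}$. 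This is harmless because $\smashprod{A}{H}\cong A\otimes H$ is projective, so adding it does not change $\mathcal{S}^\perp$.

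The only delicate step is the Ext isomorphism across the adjunction. Once the exactness of both $A\otimes-$ and the forgetful functor is in hand, it is routine. The rest is standard.
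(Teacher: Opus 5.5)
Your proof is correct and follows essentially the same route as the paper: the adjunction isomorphism $\operatorname{Ext}^1_{A\# H}(A\otimes S,Y)\cong\operatorname{Ext}^1_H(S,Y)$, then Baer's criterion with d\'evissage along composition series of the quotients $H/I$, then completeness of the cotorsion pair cogenerated by the set $\{A\otimes S\}$. The differences are cosmetic. You compute the Ext groups with an injective resolution of $Y$, using that the forgetful functor preserves injectives, whereas the paper uses a projective resolution of $V$, using that $A\otimes-$ preserves projectives; and you invoke Eklof--Trlifaj where the paper cites Becker's Corollary~1.2.2.
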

\begin{proof}
  Before beginning the proof, we remind the reader of the elementary equality
  \[
    \Inj{H}=\set{Y\in\LMod{H}}[\forall S\in\LMod{H}\text{: simple},\
    \Ext<H>[1]{S}{Y}=0],
  \]
  which can be verified as follows: By~\cite[Baer's Criterion~2.3.1]{Wei94}, an
  $H$-module $Y$ is injective if and only if $\Ext<H>[1]{X}{Y}=0$ for every
  quotient $X$ of $H$. In particular, $\Ext<H>[1]{S}{Y}=0$ for each simple
  $H$-module $S$ and, since every quotient of $H$ admits a composition series,
  the desired equality follows.

  The rest of the proof is entirely analogous to that of~\cite[Lemma~4.3 and
  Theorem~4.4]{HJ22}, but it is much easier since we work over a field.
  By \Cref{lemma:LR-preserving-proj-inj}, the exact
  functor
  \[
    A\otimes-\colon\LMod{H}\longrightarrow\LMod{\smashprod{A}{H}}
  \]
  send projective $H$-modules to projective ($\smashprod{A}{H}$)-modules, and
  hence it preserves projective resolutions. Let $V\in\LMod{H}$ and
  $Y\in\LMod{\smashprod{A}{H}}$. Choose a projective resolution $P_\bullet$ of
  $V$ so that $A\otimes P_\bullet$ is a projective resolution of $A\otimes V$.
  Then, for $i\in\ZZ$,
  \begin{align*}
    \Ext<\smashprod{A}{H}>[i]{A\otimes V}{Y}&\cong\H[i]{\Hom<\smashprod{A}{H}>{A\otimes P_\bullet}{Y}}\\&\cong\H[i]{\Hom<H>{P_\bullet}{Y}}\cong\Ext<H>[i]{V}{Y}.
  \end{align*}
  The required equality
  \[
    \W=\set{Y\in\LMod{\smashprod{A}{H}}}[\forall S\in\LMod{H} \text{: simple},\
    \Ext<\smashprod{A}{H}>[1]{A\otimes S}{Y}=0]
  \]
  is now clear. Finally, that $(\C,\W)$ forms a complete cotorsion pair in
  $\LMod{\smashprod{A}{H}}$ now follows from~\cite[Corollary~1.2.2]{Bec14}, since
  it is the cotorsion pair cogenerated by the (finite) set of objects
  $\set{A\otimes S}[S\in\LMod{H}\text{: simple}]$.
\end{proof}

The existence of the model structure described in the following theorem was
established by Ohara in~\cite{Oha24} by direct methods.

\begin{theorem}
  \label{thm:Ohara}
  The following statements hold:
  \begin{enumerate}
  \item\cite[Theorem~1.1]{Oha24} The Grothendieck category
    \[
      \LMod{\smashprod{A}{H}}\cong\LMod{A}[\LMod{H}]
    \]
    admits a combinatorial model structure determined as follows:
    \begin{itemize}
    \item A morphism in $\LMod{\smashprod{A}{H}}$ is a weak equivalence if its
      underlying morphism of $H$-modules is a stable isomorphism.
    \item A morphism in $\LMod{\smashprod{A}{H}}$ is a fibration if its
      underlying morphism of $H$-modules is an epimorphism.
    \item A morphism in $\LMod{\smashprod{A}{H}}$ is a cofibration if it has the
      left lifting property with respect to the trivial fibrations.
    \end{itemize}
    Moreover, the forgetful functor $\LMod{\smashprod{A}{H}}\to\LMod{H}$ is both
    a left Quillen functor and a right Quillen functor.
  \item The previous model structure is, in fact, the hereditary abelian model
    structure determined by the triple $(\C,\W,\LMod{\smashprod{A}{H}})$.
  \end{enumerate}
  In particular, in this model structure, every object is fibrant.
\end{theorem}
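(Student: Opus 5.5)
The plan is to build the abelian model structure determined by $(\C,\W,\LMod{\smashprod{A}{H}})$ via Hovey's correspondence~\cite[Theorem~2.2]{Hov02}. We then show that it has the same fibrations and the same trivial fibrations as the model structure in part~(1). Since a model structure is determined by these two classes, the two structures coincide, which proves~(2). Part~(1) itself is taken from~\cite{Oha24}.

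\emph{Step 1: the triple determines an abelian model structure.} Hovey's theorem requires three things: $\W$ is thick, $(\C,\W\cap\LMod{\smashprod{A}{H}})=(\C,\W)$ is a complete cotorsion pair, and $(\C\cap\W,\LMod{\smashprod{A}{H}})$ is a complete cotorsion pair.
\begin{itemize}
\item Thickness of $\W$ is \Cref{rmk:W}.
\item Completeness of $(\C,\W)$ is \Cref{prop:AH-CW-cotorsion-pair}.
\item For the third pair, it suffices to show that $\C\cap\W$ is exactly the class of projective $\smashprod{A}{H}$-modules, since (projectives, all objects) is a complete cotorsion pair in a Grothendieck category with enough projectives.
\end{itemize}
Projectives are summands of free modules $A\otimes H^{(I)}$. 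These lie in $\C$ because they are projective. They lie in $\W$ because $A\otimes H^{(I)}\cong(A\otimes H)^{(I)}$ has projective, hence injective, underlying $H$-module by \Cref{coro:hom-injectives}.

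Conversely, let $X\in\C\cap\W$ and choose an epimorphism $P\to X$ with $P$ projective. Its kernel $K$ lies in $\W$ by the 2-out-of-3 property. Since $X\in\C$, we have $\Ext<\smashprod{A}{H}>[1]{X}{K}=0$, so the sequence splits and $X$ is a summand of $P$. Hovey's theorem then gives an abelian model structure. It is hereditary by \Cref{rmk:injective-model-structures-are-hereditary}, because every object is fibrant.

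\emph{Step 2: comparing the two model structures.} In the abelian structure the fibrations are the epimorphisms (every kernel is fibrant), and the trivial fibrations are the epimorphisms with kernel in $\W$. The forgetful functor to $\LMod{H}$ is exact and faithful, so a morphism of $\smashprod{A}{H}$-modules is an epimorphism if and only if its underlying $H$-map is one. Hence the fibrations agree with those of part~(1).

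For the trivial fibrations, the key lemma concerns an epimorphism $p\colon X\to Y$ of $H$-modules with kernel $K$. We claim that $p$ is a stable isomorphism if and only if $K$ is projective-injective. The short exact sequence $0\to K\to X\to Y\to 0$ yields a distinguished triangle in $\LModu{H}$ (Happel). Its third vertex is $K$, so $p$ is invertible in $\LModu{H}$ if and only if $K\cong 0$ there, that is, if and only if $K$ is projective-injective. Consequently, the trivial fibrations of part~(1), namely epimorphisms whose underlying map is a stable isomorphism, are exactly the epimorphisms with kernel in $\W$, so they also agree.

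\emph{Step 3: conclusion.} In both structures the cofibrations are the maps with the left lifting property against trivial fibrations. The trivial cofibrations are the maps with the left lifting property against fibrations. The weak equivalences are the composites of a trivial cofibration followed by a trivial fibration. Since the fibrations and trivial fibrations agree, all classes agree and the two model structures coincide. Every object is fibrant because $\F=\LMod{\smashprod{A}{H}}$.

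I expect the main obstacle to be the identification of $\C\cap\W$ with the projectives in Step~1, which Hovey's theorem needs. The stable-category lemma in Step~2 is routine once Happel's triangulated structure is invoked.
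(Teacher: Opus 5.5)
Your proposal is correct and follows essentially the paper's route. Like the paper, you show via \Cref{rmk:W}, \Cref{prop:AH-CW-cotorsion-pair} and $\C\cap\W=\Proj{\smashprod{A}{H}}$ that the triple determines an abelian model structure, then identify it with the structure in (1) by matching fibrant objects and trivial fibrations (the epimorphisms with kernel in $\W$). The differences are minor: you split $0\to K\to P\to X\to 0$ using $\Ext<\smashprod{A}{H}>[1]{X}{K}=0$ with $K\in\W$ by 2-out-of-3, where the paper instead lifts $\id[X]$ along the fibration $P\twoheadrightarrow X$ because $X$ is acyclic cofibrant in the transferred structure. Also, you take (1) from Ohara, whereas the paper rederives it from \Cref{thm:Hovey-H-Mod} and \Cref{thm:Lurie-transfer-model_structure}, including the Quillen properties of the forgetful functor, which your write-up never checks.
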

\begin{proof}
  The existence of the claimed combinatorial model structure and the Quillen
  properties of the forgetful functor are immediate consequences of
  \Cref{thm:Hovey-H-Mod,thm:Lurie-transfer-model_structure} (notice that every
  $H$-module is cofibrant). Since the forgetful functor
  $\LMod{\smashprod{A}{H}}\to\LMod{H}$ detects epimorphisms, the fibrations of
  this model structure are the epimorphisms, and the trivial fibrations are the
  epimorphisms with kernel in $\W$. Consequently, the class of  cofibrant
  objects of this model structure is precisely the class $\C$
  (see~\Cref{not:W-C-Hopfologica-der-cat}); notice also that every object of
  $\LMod{\smashprod{A}{H}}$ is fibrant.

  We now wish to show that $(\C,\W,\LMod{\smashprod{A}{H}})$ determines a
  (hereditary) abelian model structure on $\LMod{\smashprod{A}{H}}$. In view of
  \Cref{rmk:W} and \Cref{prop:AH-CW-cotorsion-pair}, we may deduce the statement
  from \cite[Corollary~1.1.9]{Bec14} as soon as we show that $\C\cap\W$ is
  precisely the class $\Proj{\smashprod{A}{H}}$ of projective
  ($\smashprod{A}{H}$)-modules. It is clear that
  $\Proj{\smashprod{A}{H}}\subseteq\C$. To show that
  $\Proj{\smashprod{A}{H}}\subseteq\W$ it suffices to observe that the class
  $\W$ is closed under small coproducts, direct summands and, by By
  \Cref{lemma:LR-preserving-proj-inj}, contains the regular representation of
  $\smashprod{A}{H}$ (whose underling $H$-module is $A\otimes H$). To show the
  converse inclusion $\C\cap\W\subseteq\Proj{\smashprod{A}{H}}$, given
  $X\in\C\cap\W$, choose an exact sequence of ($\smashprod{A}{H}$)-modules
  \[
    0\longrightarrow Y\longrightarrow P\longrightarrow X\longrightarrow 0
  \]
  with $P\in\Proj{\smashprod{A}{H}}$. Since $X\in\C\cap\W$ is an acyclic
  cofibrant object of the model structure constructed using
  \Cref{thm:Lurie-transfer-model_structure}, its identity morphism lifts along
  the epimorphism/fibration $P\twoheadrightarrow X$; hence, $X$ is a direct
  summand of $P$ and is therefore projective.

  In order to show that these two model structures coincide, it is enough to
  show that they have the same class of cofibrations and the same class of
  fibrant objects (which is clear, since all objects are fibrant in both model
  structures), see~\cite[Proposition~E.1.10]{Joy}. Therefore, it suffices to
  show that both model structures have the same class of trivial fibrations, as
  cofibrations are determined by the left-lifting property with respect to
  these. But it is clear that both model structures have the same class of
  trivial fibrations, namely the epimorphisms whose kernel lies in $\W$. This
  finishes the proof.
\end{proof}

\begin{remark}
  \label{rmk:Qis_qiso}
  The weak equivalences of the model structure described in \Cref{thm:Ohara} are
  precisely the `quasi-isomorphisms of ($\smashprod{A}{H}$)-modules' in the
  sense of~\cite[Definition~4.1]{Qi14}, see also~\cite[Remark~2.7]{Oha24}.
\end{remark}

\Cref{thm:Ohara} and \Cref{thm:Gillespie} permit us to make the following
definition. Recall also from \Cref{thm:SS-transfer-model_structure} that we may
view $A\in\Alg{\LMod{H}}$ as an algebra object of $\StLMod{H}$.

\begin{definition}
  \label{def:Hopfological_derived_category}
  The \emph{Hopfological derived $\infty$-category of $A$} is the presentable
  stable $\infty$-category
  \[
    \DerCat{A,H}\coloneqq\MLoc[\Weq]{\LMod{\smashprod{A}{H}}}\simeq\LMod{A}[\StLMod{H}],
  \]
  where the right-most equivalence is obtained
  from~\Cref{thm:Lurie-transfer-model_structure}. Its homotopy category
  $\Ho{\DerCat{A,H}}$ is the Hopfological derived category defined by Qi
  in~\cite[Section~4]{Qi14}, see~\Cref{rmk:Qis_qiso}. In particular, the
  $\infty$-category $\DerCat{A,H}$ is compactly generated
  by~\cite[Proposition~7.6]{Qi14} and is right-tensored over $\StLMod{H}$
  (\Cref{thm:props_of_Mod}). The full subcategory
  \[
    \perf{A,H}\coloneqq\DerCat{A,H}^\omega\subseteq\DerCat{A,H}
  \]
  of compact objects is the thick subcategory generated by the set
  \[
    \set{A\otimes S\in\DerCat{A,H}}[S\in\LMod{H}\text{: simple}],
  \]
  see~\cite[Corollary~7.15]{Qi14} or, alternatively, apply
  \Cref{thm:props_of_Mod}.
\end{definition}

\begin{remark}
  \label{rmk:cosing-presentation_Hopfological_derived_cat}
  The Hopfological derived $\infty$-category of $A$ admits the following
  alternative description. Equip the category $\Ch{\LMod{H}}$ with the model
  structure described in \Cref{thm:Becker-icosing}, so that there is an
  equivalence of presentable monoidal stable $\infty$-categories
  \[
    \StLMod{H}\stackrel{\sim}{\longrightarrow}\MLoc[\Weq[co,sing]]{\Ch{\LMod{H}}}.
  \]
  \Cref{thm:Lurie-transfer-model_structure} applied to this model category
  structure yields a (right-transferred) model category structure on the
  category
  \[
    \LMod{A}[\Ch{\LMod{H}}]\cong\Ch{\LMod{\smashprod{A}{H}}}
  \]
  whose underlying $\infty$-category is
  \begin{align*}
    \MLoc[\Weq[co,sing]]{\LMod{A}[\Ch{\LMod{H}}]}&\simeq\LMod{A}[\MLoc[\Weq[co,sing]]{\Ch{\LMod{H}}}]\\
                                                 &\simeq\LMod{A}[\StLMod{H}]\\
                                                 &\simeq\DerCat{A,H}.
  \end{align*}
  In particular, the Hopfological derived $\infty$-category of $A$ can be
  presented as an $\infty$-categorical localisation of the category of cochain
  complexes of $(\smashprod{A}{H})$-modules.
\end{remark}

\begin{remark}
  \label{rmk:trivial_example}
  In the special case where $A=\kk$ is the monoidal unit of $\LMod{H}$, there
  are isomorphisms of categories
  \[
    \LMod{\smashprod{\kk}{H}}\cong\LMod{\kk}[\LMod{H}]\cong\LMod{H}.
  \]
  From this, it readily follows that there is an equivalence of
  $\infty$-categories
  \[
    \DerCat{\kk,H}\simeq\StLMod{H}.
  \]
\end{remark}

When the Hopf algebra $H$ acts trivially on $A$, the Hopfological derived
$\infty$-category has a particularly simple description.

\begin{theorem}
  \label{thm:splitting_formula}
  Suppose that $H$ acts trivially on $A\in\Alg{\LMod{H}}$, so that $A$ is simply
  the datum of a $\kk$-algebra. Then, there is an equivalence of
  $\infty$-categories
  \[
    \DerCat{A,H}\simeq\DerCat{\LMod{A}}\otimes_{\DerCat{\Mod{\kk}}}\StLMod{H}
  \]
  right-tensored over $\StLMod{H}$, where the relative tensor product is taken
  over the derived $\infty$-category of the trivial Hopf algebra $\kk$.
\end{theorem}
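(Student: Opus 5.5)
The plan is to deduce the statement from \Cref{thm:props_of_Mod}(3), applied with base $\M=\DerCat{\Mod{\kk}}$ and with $\N=\StLMod{H}$. The key observation is that when $H$ acts trivially on $A$, the algebra object $A\in\Alg{\StLMod{H}}$ is the image of the algebra $A\in\Alg{\DerCat{\Mod{\kk}}}$ under the colimit-preserving monoidal functor
\[
  \DerCat{\Mod{\kk}}\longrightarrow\StLMod{H}
\]
from \Cref{subsec:linear_structures}. Indeed, at the model-categorical level this functor is induced by restriction along the counit $H\to\kk$. It sends the complex $A[0]$ with its algebra structure to $A$ with the trivial $H$-action. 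Since the functor is monoidal, \Cref{prop:monoidal_functors_preserve_algs_mods} shows it carries the algebra object to the algebra object, and the identification is visible on the strict level before localising.

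Next I would identify $\LMod{A}[\StLMod{H}]$ with $\LMod{A}[\N]$, where $\N=\StLMod{H}$ is regarded as a presentable $\infty$-category left-tensored over $\M=\DerCat{\Mod{\kk}}$ via the monoidal functor above. Here $\LMod{A}[\N]$ means left $A$-module objects for the induced action $\M\times\N\to\N$. The general fact needed is this: for a monoidal functor $F\colon\M\to\N$ and $A\in\Alg{\M}$, left $FA$-modules in $\N$, with $\N$ regarded as a module over itself, are the same as left $A$-modules for the restricted $\M$-action on $\N$. This holds because the action $a\otimes n$ is computed as $F(a)\otimes n$, so the $\infty$-operadic data agree. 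Then \Cref{thm:props_of_Mod}(3), in its variant for $\N$ left-tensored over $\M$, gives
\[
  \DerCat{A,H}\simeq\LMod{A}[\StLMod{H}]\simeq\LMod{A}[\DerCat{\Mod{\kk}}]\otimes_{\DerCat{\Mod{\kk}}}\StLMod{H}.
\]
Finally, $\LMod{A}[\DerCat{\Mod{\kk}}]\simeq\DerCat{\LMod{A}}$ by \Cref{thm:Lurie-transfer-model_structure} applied to $\Ch{\Mod{\kk}}$ with its injective model structure, exactly as in \Cref{prop:der-cat-smash} with $H=\kk$.

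For the right $\StLMod{H}$-tensoring, both sides carry right actions of $\StLMod{H}$. On the left side the action comes from $\StLMod{H}$ acting on itself on the right, via \Cref{thm:props_of_Mod}(1). On the right side it comes from the right factor of the relative tensor product. I would check compatibility by viewing $\StLMod{H}$ as a $\DerCat{\Mod{\kk}}$-$\StLMod{H}$-bimodule in $\PrL$. Lurie's equivalence is natural in the bimodule $\N$, so it upgrades to an equivalence of right $\StLMod{H}$-modules.

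The main obstacle I expect is the middle step: making rigorous, at the $\infty$-categorical level, that modules over $F(A)$ in $\N$ coincide with modules over $A$ for the restricted action. This is also where the right-module compatibility must be verified. I would handle it by citing Lurie's treatment of module objects under restriction along monoidal functors, i.e.\ pulling back the $\mathcal{LM}^\otimes$-operad structure along $\M^\otimes\to\N^\otimes$. Alternatively, I would argue model-categorically: $\LMod{A}[\Ch{\LMod{H}}]$, with $H$ acting trivially on $A$, is literally the category of $A$-modules for the $\Ch{\Mod{\kk}}$-action on $\Ch{\LMod{H}}<co,sing>$, as in \Cref{rmk:cosing-presentation_Hopfological_derived_cat}. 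Non-cocommutativity of $H$ causes no issue, because the image of $\DerCat{\Mod{\kk}}$ is central via trivial actions.
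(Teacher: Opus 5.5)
Your proposal is correct and follows essentially the same route as the paper: realise $A$ as the image of an algebra in $\DerCat{\Mod{\kk}}$ under the monoidal functor $\DerCat{\Mod{\kk}}\to\StLMod{H}$, apply \Cref{thm:props_of_Mod}(3), and identify $\LMod{A}[\DerCat{\Mod{\kk}}]\simeq\DerCat{\LMod{A}}$ via \Cref{prop:der-cat-smash} for the trivial Hopf algebra $\kk$. The paper leaves two of your points implicit: identifying $F(A)$-modules in $\StLMod{H}$ with $A$-modules for the restricted $\DerCat{\Mod{\kk}}$-action, and using naturality of Lurie's equivalence in the bimodule $\StLMod{H}$ to obtain the right $\StLMod{H}$-tensoring. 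Your treatment of both is sound, and no centrality is needed there, since the bimodule structure comes directly from the monoidal functor.
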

\begin{proof}
  We begin the proof with some observations. We can interpret the assumption
  that $H$ acts trivially on $A$ as saying that $A$ lies in the image of the
  functor
  \[
    \Alg{\DerCat{\Mod{k}}}\longrightarrow\Alg{\StLMod{H}}
  \]
  induced by the canonical colimit-preserving monoidal functor
  $\DerCat{\Mod{\kk}}\to\StLMod{H}$ (\Cref{subsec:linear_structures}). Hence,
  \Cref{thm:props_of_Mod}, yields an equivalence of $\infty$-categories
  \[
    \DerCat{A,H}\simeq\LMod{A}[\StLMod{H}]\simeq\LMod{A}[\DerCat{\Mod{\kk}}]\otimes_{\DerCat{\Mod{\kk}}}\StLMod{H}.
  \]
  On the other hand, \Cref{prop:der-cat-smash} for the trivial Hopf algebra
  $\kk$ yields an equivalence of $\infty$-categories
  \[
    \DerCat{\LMod{A}}\simeq\LMod{A}[\DerCat{\Mod{\kk}}]
  \] 
  Combining these two equivalences we obtain the required equivalence of $\infty$-categories.
\end{proof}

Recall from \Cref{subsec:linear_structures} that there is an exact monoidal functor
\[
  \kk\otimes-\colon\perf{\kk}\longrightarrow\stLmod{H}.
\]
By restriction, we obtain a functor
\[
  \RMod{\stLmod{H}}[\PrStLcg]\longrightarrow\Mod{\perf{\kk}}[\PrStLcg].
\]
So that we may view $\stLmod{H}$ as a $\perf{\kk}$-linear $\infty$-category.
\begin{corollary}
  \label{coro:splitting_formula_compacts}
  Suppose that the Hopf algebra $H$ acts trivially on $A$. Then, there is an
  equivalence of $\kk$-linear $\infty$-categories
  \[
    \perf{A,H}\simeq\perf{A}\otimes_{\perf{\kk}}\stLmod{H}.
  \]
\end{corollary}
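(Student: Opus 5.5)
The plan is to deduce the corollary from \Cref{thm:splitting_formula} by passing to compact objects, using that $\Ind$ is a symmetric monoidal functor $\PrStLcg\to\PrStL$. Relative tensor products in $\PrStL$ are geometric realisations of bar constructions, and $\Ind$, being symmetric monoidal and colimit-preserving (as a functor into $\PrStL$ it commutes with the relevant colimits), carries bar constructions to bar constructions. This gives an equivalence
\[
  \Ind[\perf{A}\otimes_{\perf{\kk}}\stLmod{H}]\simeq\Ind[\perf{A}]\otimes_{\Ind[\perf{\kk}]}\Ind[\stLmod{H}]\simeq\DerCat{\LMod{A}}\otimes_{\DerCat{\Mod{\kk}}}\StLMod{H}.
\]
Here we use $\Ind[\perf{A}]\simeq\DerCat{\LMod{A}}$, $\Ind[\perf{\kk}]\simeq\DerCat{\Mod{\kk}}$ and $\Ind[\stLmod{H}]\simeq\StLMod{H}$, all of which follow from compact generation (\Cref{def:StModH}). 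We also need that the right $\StLMod{H}$-action and the $\DerCat{\Mod{\kk}}$-linear structures are the $\Ind$-extensions of the small ones; this holds because the monoidal functor $\DerCat{\Mod{\kk}}\to\StLMod{H}$ of \Cref{subsec:linear_structures} restricts to $\perf{\kk}\to\stLmod{H}$, and the action of $\DerCat{\Mod{\kk}}$ on $\DerCat{\LMod{A}}$ preserves compacts.

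Next, combine this with \Cref{thm:splitting_formula} to obtain
\[
  \Ind[\perf{A}\otimes_{\perf{\kk}}\stLmod{H}]\simeq\DerCat{A,H}.
\]
Taking compact objects on both sides gives the claim. For this we use that for an idempotent-complete small stable $\C$ one has $\Ind[\C]^\omega\simeq\C$, and that the relative tensor product in $\PrStLcg$ is by definition idempotent complete. We also check that the equivalence is $\perf{\kk}$-linear: it is $\DerCat{\Mod{\kk}}$-linear because it is $\StLMod{H}$-linear and the $\kk$-linear structure is obtained by restriction along $\DerCat{\Mod{\kk}}\to\StLMod{H}$, and restricting to compacts then gives $\perf{\kk}$-linearity.

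The main obstacle is the first step, namely justifying that $\Ind$ commutes with relative tensor products. The relative tensor product in $\PrStLcg$ is computed as a colimit of a simplicial diagram in $\PrStLcg$, and $\Ind\colon\PrStLcg\to\PrStL$ preserves small colimits, since it is a left adjoint-type functor onto compactly generated categories and colimits in $\PrStL$ of compact-preserving diagrams remain compactly generated. I would cite Lurie's description of $\PrStLcg$-colimits via $\Ind$, or equivalently argue that $\PrStLcg\simeq$ (compactly generated, compact-preserving) is closed under colimits in $\PrStL$. Once that is granted, the rest is formal.
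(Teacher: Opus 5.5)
Your proposal is correct and follows the paper's route. The paper's proof is the chain $\perf{A,H}=\DerCat{A,H}^\omega\simeq(\DerCat{\LMod{A}}\otimes_{\DerCat{\Mod{\kk}}}\StLMod{H})^\omega\simeq\perf{A}\otimes_{\perf{\kk}}\stLmod{H}$, where the last step is the compatibility of $\Ind$ with relative tensor products \cite[Section~3.1]{BGT13}, which you unpack via bar constructions. For the step you flag as the obstacle, $\Ind$ is not literally a left adjoint. The clean justification is that $\Ind$ identifies $\PrStLcg$ with the compactly generated categories and compact-preserving functors, a subcategory closed under small colimits in $\PrL$ \cite[Section~5.5.7]{Lur09}. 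Hence $\Ind$ preserves geometric realisations and, being symmetric monoidal, also relative tensor products.
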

\begin{proof}
  This is an immediate consequence of \Cref{thm:splitting_formula} and the
  well-known relationship between the relative tensor product over
  $\DerCat{\Mod{\kk}}$ and over $\perf{\kk}$, compare
  with~\cite[Section~3.1]{BGT13}. Namely,
  \[
    \perf{A,H}=(\DerCat{A,H})^\omega\simeq(\DerCat{\LMod{A}}\otimes_{\DerCat{\Mod{\kk}}}\StLMod{H})^\omega=\perf{A}\otimes_{\perf{\kk}}\stLmod{H}.
  \]
\end{proof}

\Cref{thm:splitting_formula} has the following interesting consequence.

\begin{theorem}
  \label{thm:Morita_invariance}
  Let $A$ and $B$ be a pair of left $H$-module algebras on which $H$ acts
  trivially. Suppose that $X$ is a cochain complex of ordinary $B$-$A$-bimodules
  such that the $\DerCat{\Mod{\kk}}$-linear functor
  \[
    X\Lotimes*_A-\colon\DerCat{\LMod{A}}\stackrel{\sim}{\longrightarrow}\DerCat{\LMod{B}}
  \]
  is an equivalence of derived $\infty$-categories. Then, there is an induced
  equivalence of Hopfological derived $\infty$-categories
  \[
    \DerCat{A,H}\stackrel{\sim}{\longrightarrow}\DerCat{B,H}
  \]
  compatible with the right action of $\StLMod{H}$.
\end{theorem}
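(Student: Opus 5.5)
The plan is to deduce the statement formally from \Cref{thm:splitting_formula} by applying the functor $-\otimes_{\DerCat{\Mod{\kk}}}\StLMod{H}$ to the given equivalence. That functor is the relative tensor product $\RMod{\DerCat{\Mod{\kk}}}[\PrStL]\times\LMod{\DerCat{\Mod{\kk}}}[\PrStL]\to\PrStL$ with the second variable fixed. Since $\DerCat{\Mod{\kk}}$ is symmetric monoidal, its target is naturally $\RMod{\StLMod{H}}[\PrStL]$, using the right action of $\StLMod{H}$ on itself. Any functor sends equivalences to equivalences, so the whole task is to show that $X\Lotimes*_A-$ is an equivalence \emph{in} $\Mod{\DerCat{\Mod{\kk}}}[\PrStL]$, i.e.\ a colimit-preserving $\DerCat{\Mod{\kk}}$-linear equivalence.

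First I would place $X\Lotimes*_A-$ in the $\infty$-categorical framework of \Cref{sec:preliminaries}. By \Cref{prop:der-cat-smash} for the trivial Hopf algebra, $\DerCat{\LMod{A}}\simeq\LMod{A}[\DerCat{\Mod{\kk}}]$, and similarly for $B$. The same argument applied to bimodules, via the bimodule version of \Cref{thm:Lurie-transfer-model_structure}, identifies $X$ with an object of $\Bimod{B}{A}[\DerCat{\Mod{\kk}}]$. The functor $X\Lotimes*_A-$ is then Lurie's relative tensor product $X\otimes_A-\colon\LMod{A}[\DerCat{\Mod{\kk}}]\to\LMod{B}[\DerCat{\Mod{\kk}}]$. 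This functor preserves colimits, since the tensor product preserves colimits in each variable (\Cref{thm:MLoc-presentably-monoidal}) and the bar construction is a colimit. It is $\DerCat{\Mod{\kk}}$-linear, since the action of $V\in\DerCat{\Mod{\kk}}$ is $M\mapsto M\otimes V$, and associativity of relative tensor products gives $X\otimes_A(M\otimes V)\simeq(X\otimes_A M)\otimes V$ coherently; here one uses the symmetric monoidal structure to move $V$ past $X$. Concretely, I would realise the linear structure as in the proof of \Cref{thm:props_of_Mod}(3): under $\LMod{A}[\DerCat{\Mod{\kk}}]\otimes_{\DerCat{\Mod{\kk}}}\N\simeq\LMod{A}[\N]$, the functor $X\otimes_A-$ is tensoring with a bimodule, which is functorial in $\N$.

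Next, an equivalence in $\Mod{\DerCat{\Mod{\kk}}}[\PrStL]$ is detected on underlying $\infty$-categories, because the forgetful functor to $\PrStL$ is conservative. So the hypothesis that the underlying functor is an equivalence suffices. Applying $-\otimes_{\DerCat{\Mod{\kk}}}\StLMod{H}$ and composing with the equivalences of \Cref{thm:splitting_formula} for $A$ and for $B$ then gives
\[
\DerCat{A,H}\simeq\DerCat{\LMod{A}}\otimes_{\DerCat{\Mod{\kk}}}\StLMod{H}\stackrel{\sim}{\longrightarrow}\DerCat{\LMod{B}}\otimes_{\DerCat{\Mod{\kk}}}\StLMod{H}\simeq\DerCat{B,H}.
\]
These equivalences are right $\StLMod{H}$-linear: this is asserted in \Cref{thm:splitting_formula}, and the middle map is $\StLMod{H}$-linear by construction. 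For completeness one can also identify the composite with $X\otimes_A-\colon\LMod{A}[\StLMod{H}]\to\LMod{B}[\StLMod{H}]$, with $X$ pushed forward along $\DerCat{\Mod{\kk}}\to\StLMod{H}$ via \Cref{prop:monoidal_functors_preserve_algs_mods}, by naturality of \Cref{thm:props_of_Mod}(3) in $\N$.

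The main obstacle I expect is this linearity step: checking coherently that the functor induced by a complex of bimodules is a morphism of $\DerCat{\Mod{\kk}}$-module $\infty$-categories, rather than merely compatible with the action up to a non-coherent natural isomorphism. To handle it, I would first invoke the identification $\LFun<\DerCat{\Mod{\kk}}>{\LMod{A}[\DerCat{\Mod{\kk}}]}{\LMod{B}[\DerCat{\Mod{\kk}}]}\simeq\Bimod{B}{A}[\DerCat{\Mod{\kk}}]$ (Lurie's Eilenberg--Watts theorem), under which $X$ corresponds exactly to $X\otimes_A-$. This settles linearity at once. Only if that identification is unavailable would I fall back on the explicit bar construction argument sketched above.
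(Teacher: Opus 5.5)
Your proposal is correct and is essentially the paper's own argument: the paper likewise applies the relative tensor product $-\otimes_{\DerCat{\Mod{\kk}}}\StLMod{H}$ to the colimit-preserving $\DerCat{\Mod{\kk}}$-linear equivalence $X\Lotimes*_A-$, and then composes with the equivalences of \Cref{thm:splitting_formula} for $A$ and for $B$. The only difference is that the paper builds the $\DerCat{\Mod{\kk}}$-linearity of $X\Lotimes*_A-$ into the hypothesis, whereas you justify it through Lurie's identification of linear functors with bimodules; this is harmless extra care.
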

\begin{proof}
  The required equivalence of $\infty$-categories is an immediate consequence of
  \Cref{thm:splitting_formula} and the fact that the relative tensor product is
  functorial on $\DerCat{\Mod{\kk}}$-linear colimit-preserving functors.
\end{proof}

\subsection{A Hopfological analogue of Krause's recollement}

We now explain how to lift Krause's recollement to the Hopfological setting.

\begin{theorem}
  \label{thm:Krauses-recollement-Hopfological}
  There is a commutative diagram of recollements of compactly-generated stable
  $\infty$-categories
  \[
    \begin{tikzcd}
      \DerCat{A,H}\ar[hookrightarrow]{r}[description]{j}\dar&\LMod{A}[\KCh{\Inj{H}}]\dar\ar{r}[description]{q}\ar[shift
      right=0.5em]{l}[swap]{j_L}\ar[shift
      left=0.5em]{l}{j_R}&\DerCat{\LMod{\smashprod{A}{H}}}\ar[shift
      right=0.5em,hook]{l}[swap]{q_L}\ar[hook',shift left=0.5em]{l}{q_R}\dar\\
      \StLMod{H}\ar[hookrightarrow]{r}[description]{i}&\KCh{\Inj{H}}\ar{r}[description]{p}\ar[shift
      right=0.5em]{l}[swap]{i_L}\ar[shift
      left=0.5em]{l}{i_R}&\DerCat{\LMod{H}}\ar[shift
      right=0.5em,hook]{l}[swap]{p_L}\ar[hook',shift left=0.5em]{l}{p_R}
    \end{tikzcd}
  \]
  where the vertical functors are the corresponding forgetful functors and the
  bottom row is given by Krause's recollement (\Cref{thm:Krauses-recollement}).
\end{theorem}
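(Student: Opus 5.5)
The plan is to construct the top row by applying the $2$-functor $\LMod{A}[-]$ to Krause's recollement, using that the adjunctions there are linear over the relevant monoidal structure. The approach mirrors \Cref{rmk:Beckers_proof}. Apply \Cref{thm:Lurie-transfer-model_structure} to each of the three monoidal model structures $\Ch{\LMod{H}}<co,sing>$, $\Ch{\LMod{H}}<co>$ and $\Ch{\LMod{H}}<inj>$, with the algebra $A[0]$, which is cofibrant since all objects are cofibrant. This gives three transferred model structures on $\LMod{A}[\Ch{\LMod{H}}]\cong\Ch{\LMod{\smashprod{A}{H}}}$. By \Cref{rmk:cosing-presentation_Hopfological_derived_cat}, \Cref{prop:der-cat-smash} and \Cref{thm:Lurie-transfer-model_structure}(2), their underlying $\infty$-categories are $\DerCat{A,H}$, $\LMod{A}[\KCh{\Inj{H}}]$ and $\DerCat{\LMod{\smashprod{A}{H}}}$.

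The identity functors in \Cref{rmk:Beckers_proof} are Quillen adjunctions. Weak equivalences and fibrations in the transferred structures are detected by the forgetful functor, so the same identities remain right Quillen between the transferred structures, in the same directions. Passing to underlying $\infty$-categories yields adjunctions $j_L\dashv j$ and $q\dashv q_R$ lying over $i_L\dashv i$ and $p\dashv p_R$, and the forgetful functors commute with the right adjoints $j,q_R$ on the nose. Equivalently, and more cleanly, I would argue $\infty$-categorically. The functors $i$, $i_L$, $p$, $p_R$ are monoidal or lax monoidal; $i_L$ and $p$ are monoidal localisations because their kernels are $\otimes$-ideals by \Cref{thm:Krauses-recollement}. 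Hence by \Cref{prop:monoidal_functors_preserve_algs_mods} they induce functors on $A$-modules, and $A\in\KCh{\Inj{H}}$ maps to the corresponding $A$'s. The remaining adjoints $j_R$ and $q_L$ exist by the adjoint functor theorem: $j$ and $q$ preserve limits and colimits, since the forgetful functors to the base create both (\Cref{thm:Lurie-transfer-model_structure}(1), \Cref{thm:props_of_Mod}), and the base functors $i,p$ preserve both.

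Next I would verify the recollement axioms.
\begin{itemize}
\item Fully faithfulness of $j$ and $q_R$ follows because the unit or counit maps are computed on underlying objects, where they are the equivalences for $i$ and $p_R$; forgetful functors are conservative.
\item The kernel of $q$ consists of modules whose underlying object lies in $\ker p=\operatorname{im} i$. Such a module $M$ has $M\to j j_L M$ (equivalently $j j_R M \to M$) an equivalence after forgetting, hence an equivalence, so $\ker q=\operatorname{im} j$.
\item Full faithfulness of $q_L$ then follows formally from that of $q_R$.
\end{itemize}
Compact generation of the middle term follows from \Cref{thm:props_of_Mod}(2), since $\KCh{\Inj{H}}$ is compactly generated; the outer terms were treated earlier. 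Commutativity of the diagram holds for $j,q,q_R$ by construction; for $j_L$, $j_R$ and $q_L$ I expect only a Beck--Chevalley-type compatibility, which I would check on generators $A\otimes X$.

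The main obstacle is ensuring that $i_L$ and $p$ act compatibly with the $A$-module structure, i.e.\ that the left adjoints lift as module-category functors rather than merely as right adjoints. Concretely, one must check that $i_L$ is a monoidal (not only oplax) functor, so that $i_L A$ is an algebra and $j_L(M)$ is $i_L M$ with its induced module structure. I would settle this via the $\otimes$-ideal property of $\ker i_L=\operatorname{im}p_L$ from \Cref{thm:Krauses-recollement}, which makes $i_L$ a monoidal Bousfield localisation (Lurie's localisation compatible with monoidal structure).
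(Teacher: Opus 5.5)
Your proposal is correct and follows essentially the same route as the paper: right-transfer Becker's Quillen adjunctions from \Cref{rmk:Beckers_proof} to $\LMod{A}[\Ch{\LMod{H}}]$, pass to underlying $\infty$-categories, and use conservativity of the forgetful functors to get full faithfulness of $j$ and $q_R$ and $\ker q=\operatorname{im} j$ (your $\infty$-categorical variant via monoidal localisations is essentially the argument of \Cref{rem-adjointability-modules}). The obstacle you flag is not a real one, since $i_L$ and $p$ are induced by monoidal left Quillen identity functors, so $Uj_L\simeq i_LU$ holds by construction exactly as $Uq\simeq pU$ does; for the squares involving $j_R$ and $q_L$, use the fibre sequences of the two recollements rather than a check on generators $A\otimes X$, because $i_R$, and hence $j_R$, need not preserve colimits.
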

\begin{proof}
  Firstly, it follows from \Cref{rmk:Beckers_proof} that there is commutative
  diagram of Quillen functors (as indicated with the decorations) 
  \begin{center}\vspace{-1em}
  \resizebox{\textwidth}{!}{\begin{tikzcd}[ampersand replacement=\&]
      \LMod{A}[\Ch{\LMod{H}}]<co,sing>\ar{d}\ar{r}[description]{\mathbb{R}\id}\&\LMod{A}[\Ch{\LMod{H}}]<co>\ar{d}\ar{r}[description]{\mathbb{L}\id}\ar[shift
      right=0.5em]{l}[swap]{\mathbb{L}\id}\&\LMod{A}[\Ch{\LMod{H}}]<inj>\ar[shift
      left=0.5em]{l}{\mathbb{R}\id}\ar{d}\\
      \Ch{\LMod{H}}<co,sing>\ar{r}[description]{\mathbb{R}\id}\&\Ch{\LMod{H}}<co>\ar{r}[description]{\mathbb{L}\id}\ar[shift
      right=0.5em]{l}[swap]{\mathbb{L}\id}\&\Ch{\LMod{H}}<inj>\ar[shift
      left=0.5em]{l}{\mathbb{R}\id}
    \end{tikzcd}}
  \end{center}
  Here, the top row is given by the category $\LMod{A}[\Ch{\LMod{H}}]$ equipped
  with the right-transferred model structures from the bottom row, for which the
  forgetful functors are both right Quillen functors and left Quillen functors.
  (\Cref{thm:Lurie-transfer-model_structure}). The claim that the functors in
  the top row are Quillen functors readily
  follows. Passing to underlying $\infty$-categories in the above diagram, we
  obtain a commutative diagram 
    \[
    \begin{tikzcd}
      \DerCat{A,H}\ar{r}[description]{j}\ar{d}[description]{U}&\LMod{A}[\KCh{\Inj{H}}]\ar{d}[description]{U}\ar{r}{q}\ar[shift
      right=0.5em]{l}[swap]{j_L}&\DerCat{\LMod{\smashprod{A}{H}}}\ar[hook',shift left=0.5em]{l}{q_R}\ar{d}[description]{U}\\
      \StLMod{H}\ar[hookrightarrow]{r}[description]{i}&\KCh{\Inj{H}}\ar{r}[description]{p}\ar[shift
      right=0.5em]{l}[swap]{i_L}&\DerCat{\LMod{H}}\ar[hook',shift
      left=0.5em]{l}{p_R}
    \end{tikzcd}
  \]
  Recall that the downwards-pointing forgetful functors are conservative. To
  prove that the top row of the diagram determines a recollement, it suffices to
  show the following:
  \begin{itemize}
  \item The functors $j$ and $q_R$ are fully faithful. Since the case of $q_R$
    can be shown analogously, we only prove that $j$ is fully faithful. For
    this, it suffices to prove that, for each object $X\in\DerCat{A,X}$, the counit map $\varepsilon_X\colon j_L
    j(X)\to X$ of the adjunction $j_L\dashv j$ is invertible. For this, it suffices
    to observe that the commutativity of the diagram implies that
    $U(\varepsilon_X)$ is equivalent to the counit map $i_Li(U(X))
    \to U(X)$ of the adjunction $i_L\dashv i$, which is invertible for $i$ is fully faithful. Since the functor
    $U$ is conservative, the claim follows.
  \item The essential image if $j$ is precisely the kernel of $q$. To prove
    this, we observe first that
    \[
      U\circ q\circ j\simeq p\circ i\circ U\simeq 0.
    \]
    Since the functor $U$ is conservative, we deduce that $q\circ i\simeq 0$.
    Suppose now that $X\in\LMod{A}{\KCh{\Inj{H}}}$ lies in the kernel of $q$. We
    need to prove that the unit map $\eta_X\colon X\to jj_L(X)$ is invertible. It
    suffices to show that $U(\eta_X)$ is invertible (again, since $U$ is
    conservative). The commutativity of the diagram implies that $U(\eta_X)$ is
    equivalent to the unit map $\alpha_{U(X)}\colon U(X)\to ii_L(U(X))$; to prove that the latter
    map is invertible, it is enough to notice that $p(U(X))\simeq
    U(q(X))\simeq0$ by assumption. Since the kernel of $p$ is precisely the
    image of $i$, it follows that $\alpha_{U(X)}$ is invertible, as required.
  \end{itemize}
  This finishes the proof of the theorem.
\end{proof}

\subsection{Hopfological Morita theory}
\label{subsec:Morita_theory}

In this section we revisit the Hopfological analogue of (derived) Morita theory
discussed in~\cite[Section~8]{Qi14}. Notice that, in contrast to
\emph{op.~cit.}, we do not assume the Hopf algebra $H$ to be commutative nor
cocommutative.

Throughout this subsection, we fix a pair of algebra objects
$A,B\in\Alg{\LMod{H}}$. Recall that a $B$-$A$-bimodule object of $\LMod{H}$ is a
left $H$-module $M$ equipped with an associative and unital action map
\[
  B\otimes X\otimes A\longrightarrow X,
\]
given by a morphism of left $H$-modules. It is straightforward to verify that
the category $\Bimod{B}{A}[\LMod{H}]$ is an abelian category with small limits
and small colimits, which are created by the forgetful functor to $\LMod{H}$.
Moreover, the forgetful functor to $\LMod{H}$ is part of an adjunction
\[
  \begin{tikzcd}
    B\otimes-\otimes A\colon\LMod{H}\rar[shift
    left]&\Bimod{B}{A}[\LMod{H}]\noloc U\lar[shift left]
  \end{tikzcd}
\]
where both functors are exact. From this, it easily follows that
$\Bimod{B}{A}[\LMod{H}]$ has enough projectives. Indeed, the left adjoint
$X\mapsto B\otimes P\otimes A$ preserves projective objects (because it has an
exact right adjoint, see~\Cref{lemma:LR-preserving-proj-inj}). Hence, given $X\in\Bimod{B}{A}[\LMod{H}]$, we can choose
an epimorphism $p\colon P\twoheadrightarrow UX$ in $\LMod{H}$ with $P$ a projective $H$-module
to obtain, by adjunction, a morphism $\overline{p}\colon B\otimes P\otimes A\to
X$. Finally, since the forgetful functor detects epimorphisms, the commutative
 diagram
\[
  \begin{tikzcd}
    P\rar{\eta_P}\ar[two heads]{dr}[swap]{p}&U(B\otimes P\otimes A)\dar{U(\overline{p})}\\
    &X
  \end{tikzcd}
\]
shows that $\overline{p}$ is an epimorphism; here, $\eta_P\colon P\to U(B\otimes
P\otimes A)$ denotes the unit of the adjunction. It now also follows that
$\Bimod{B}{A}[\LMod{H}]$ is a Grothendieck category.

We begin our discussion by formulating the following
rectification result for Hopfological bimodules.

\begin{proposition}
  \label{prop:transfer-bimodules}
  The following
  statements hold:
  \begin{enumerate}
  \item The category $\Bimod{B}{A}[\LMod{H}]$ of
    $B$-$A$-bimodule objects in $\LMod{H}$ admits a (right-transferred) combinatorial model
    structure determined as follows:
    \begin{itemize}
    \item A morphism in $\Bimod{B}{A}[\LMod{H}]$ is a weak equivalence if its underlying
      morphism is a stable isomorphism in $\LMod{H}$.
    \item A morphism in $\Bimod{B}{A}[\LMod{H}]$ is a fibration if it is an epimorphism.
    \item A morphism in $\Bimod{B}{A}[\LMod{H}]$ is a cofibration if it has the left
      lifting property with respect to the trivial fibrations.
    \end{itemize}
    Moreover, the forgetful functor $\Bimod{B}{A}[\LMod{H}]\to\LMod{H}$ is both a left Quillen
    functor and a right Quillen functor (in particular, it admits a left and a
    right adjoint).
  \item There is a canonical equivalence of
    $\infty$-categories
    \begin{center}
      $\MLoc[\Weq[st]]{\Bimod{B}{A}[\LMod{H}]}\stackrel{\sim}{\longrightarrow}\Bimod{B}{A}[\StLMod{H}].$
    \end{center}
  \item In fact, the above model category structure is the hereditary abelian
    model category structure determined by the triple
    $(\C,\W,\Bimod{B}{A}[\LMod{H}])$, where $\W\subseteq\Bimod{B}{A}[\LMod{H}]$
    is the full subcategory of objects whose underlying $H$-module is
    (projective-)injective and
    \begin{center}
      $\C\coloneqq\set{X\in\Bimod{B}{A}[\LMod{H}]}[\forall Y\in\W,\ \Ext<B\text{-}A>[1]{X}{Y}=0].$
    \end{center}
  \end{enumerate}
\end{proposition}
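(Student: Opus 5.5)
The plan is to mirror the proof of \Cref{thm:Ohara}, treating bimodules as left modules over an algebra in $\LMod{H}$. For parts (1) and (2), apply \Cref{thm:Lurie-transfer-model_structure} to the combinatorial monoidal model category $\LMod{H}$ of \Cref{thm:Hovey-H-Mod}. Its bimodule variant applies because every $H$-module, in particular $A$ and $B$, is cofibrant. This produces the right-transferred combinatorial model structure on $\Bimod{B}{A}[\LMod{H}]$ and the equivalence $\MLoc[\Weq[st]]{\Bimod{B}{A}[\LMod{H}]}\simeq\Bimod{B}{A}[\MLoc{\LMod{H}}]=\Bimod{B}{A}[\StLMod{H}]$. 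It also shows that the forgetful functor is both left and right Quillen. Weak equivalences are the maps that are stable isomorphisms in $\LMod{H}$. Since fibrations in $\LMod{H}$ are the epimorphisms (every object is fibrant there) and the forgetful functor creates and detects epimorphisms, the fibrations are exactly the epimorphisms. Trivial fibrations are then the epimorphisms whose kernel has projective-injective underlying $H$-module, i.e.\ kernel in $\W$.

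For part (3), I would first establish the analogue of \Cref{prop:AH-CW-cotorsion-pair}: $\W$ equals the right orthogonal of the finite set $\set{B\otimes S\otimes A}[S\in\LMod{H}\text{: simple}]$. The exact functor $B\otimes-\otimes A$ has the exact right adjoint $U$, so by \Cref{lemma:LR-preserving-proj-inj} it preserves projectives and hence projective resolutions. The same adjunction computation then gives $\Ext<B\text{-}A>[i]{B\otimes V\otimes A}{Y}\cong\Ext<H>[i]{V}{UY}$. Combining this with the Baer-criterion characterisation of injective $H$-modules via simples yields the equality. Then \cite[Corollary~1.2.2]{Bec14} shows that $(\C,\W)$ is a complete cotorsion pair, being cogenerated by a set in a Grothendieck category with enough projectives. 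As in \Cref{rmk:W}, $\W$ is thick, with two-out-of-three on short exact sequences, because projective-injectives in the Frobenius category $\LMod{H}$ have this property.

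Next I would identify $\C\cap\W$ with the projective bimodule objects, to invoke \cite[Corollary~1.1.9]{Bec14} with the triple $(\C,\W,\Bimod{B}{A}[\LMod{H}])$. Projectives lie in $\C$ trivially. They also lie in $\W$, since they are summands of sums of $B\otimes P\otimes A$ with $P$ projective, and $U(B\otimes P\otimes A)=B\otimes P\otimes A$ is projective by \Cref{coro:hom-injectives}. Conversely, take $X\in\C\cap\W$ and an epimorphism $P\twoheadrightarrow X$ from a projective (this exists by the preceding discussion). This map is a fibration. The object $X$ is cofibrant in the transferred structure because cofibrant objects there are exactly those with the lifting property against epimorphisms with kernel in $\W$, and this is $\C$ by the standard Ext-lifting argument. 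It is also acyclic, so $\id_X$ lifts and $X$ is a summand of $P$.

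Finally, to see that the abelian model structure from Becker's corollary coincides with the transferred one, it suffices to compare trivial fibrations. In both structures these are the epimorphisms with kernel in $\W$, and cofibrations are determined by lifting against them. Every object is fibrant in both, so \cite[Proposition~E.1.10]{Joy} finishes the identification. Hereditarity is automatic by \Cref{rmk:injective-model-structures-are-hereditary}, since all objects are fibrant. I expect the main obstacle to be the bookkeeping needed to make the Ext comparison for $B\otimes-\otimes A$ fully rigorous, together with checking that the bimodule version of the transfer applies verbatim. The remaining steps transcribe the left-module case.
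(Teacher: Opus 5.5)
Your proposal is correct and is essentially the paper's proof, written out in more detail. Parts (1) and (2) come from \Cref{thm:Lurie-transfer-model_structure} applied to the structure of \Cref{thm:Hovey-H-Mod}, and part (3) repeats the argument of \Cref{thm:Ohara} using the bimodule analogue of \Cref{prop:AH-CW-cotorsion-pair}, namely that $\W$ is the right $\operatorname{Ext}^1$-orthogonal of $\{B\otimes S\otimes A\}_{S\text{ simple}}$, together with the identification of $\C\cap\W$ with the projectives and the comparison of trivial fibrations. The one citation to tidy: \Cref{coro:hom-injectives} is stated only for $-\otimes V$, so to see that $B\otimes P\otimes A$ is a projective $H$-module you should also note that $V\otimes-$ has an exact right adjoint, namely the corresponding internal $\operatorname{Hom}$ of \Cref{not:internal_hom_LMod_H} with underlying functor $\operatorname{Hom}_{\kk}(V,-)$, and then apply \Cref{lemma:LR-preserving-proj-inj}.
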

\begin{proof}
  The first two statement statements follow from
  \Cref{thm:Lurie-transfer-model_structure} applied to the model structure
  described in~\Cref{thm:Hovey-H-Mod}. The third statement can be proven using a
  similar argument to the one used in the proof of \Cref{thm:Ohara}, noticing
  that \Cref{prop:AH-CW-cotorsion-pair} also admits an analogue in this context.
  Namely,
  \[
    \W=\set{Y\in\Bimod{B}{A}[\LMod{H}]}[\forall S\in\LMod{H}\text{: simple},\
    \Ext<B\text{-}A>[1]{B\otimes
      S\otimes A}{Y}=0].\qedhere
  \]
\end{proof}

\begin{remark}
  There is a variant of \Cref{prop:transfer-bimodules} in which
  \Cref{thm:Lurie-transfer-model_structure} is applied to the model structure on
  $\Ch{\LMod{H}}$ described in \Cref{thm:Becker-icosing}, compare with
  \Cref{rmk:cosing-presentation_Hopfological_derived_cat}.
\end{remark}

\Cref{prop:transfer-bimodules} and \Cref{thm:Gillespie} permits us to make the
following definition.

\begin{definition}
  We call the presentable stable $\infty$-category
  \[
    \Bimod{B}{A}[\StLMod{H}]\simeq\MLoc[\Weq[st]]{\Bimod{B}{A}[\LMod{H}]}
  \]
  the \emph{Hopfological derived $\infty$-category of $B$-$A$-bimodules}. This a
  compactly-generated stable $\infty$-category, with
  \[
    \set{B\otimes S\otimes A\in\Bimod{B}{A}[\StLMod{H}]}[S\in\LMod{H}\text{: simple}]
  \]
  as a set of compact generators, see~\Cref{thm:props_of_Mod}.
\end{definition}

In what follows, we specialise part of the discussion in~\cite[p.~738]{Lur17} to
our setting. According to \cite[Theorem~4.8.4.1]{Lur17}, there is an equivalence
of $\infty$-categories
\begin{align*}
  \Bimod{B}{A}[\StLMod{H}]&\simeq\LFun<\StLMod{H}>{\LMod{A}[\StLMod{H}]}{\LMod{B}[\StLMod{H}]}\\
  &\simeq\LFun<\StLMod{H}>{\DerCat{A,H}}{\DerCat{B,H}},
\end{align*}
where, given presentable stable $\infty$-categories $\C$ and $\D$ that are right-tensored over
$\StLMod{H}$, we let $\LFun<\StLMod{H}>{\C}{\D}$ be $\infty$-category of
colimit-preserving $\StLMod{H}$-linear functors $\C\to\D$. Given a
$B$-$A$-bimodule $X\in\Bimod{B}{A}[\StLMod{H}]$, we denote its image under this
equivalence by
\[
  X\Lotimes*_A-\colon\DerCat{A,H}\longrightarrow\DerCat{B,H}.
\]
By the Adjoint Functor Theorem~\cite[Corollary~5.5.2.9]{Lur17}, this
colimit-preserving functor admits a right adjoint that we denote
\[
  \RHom<B>{M}{-}\colon\DerCat{B,H}\longrightarrow\DerCat{A,H}.
\]
Consequently, the Hopfological derived $\infty$-categories $\DerCat{B,H}$ and
$\DerCat{A,H}$ are equivalent as presentable stable $\infty$-categories
right-tensored over $\StLMod{H}$ if and only if there exists a $B$-$A$-bimodule
$X\in\Bimod{B}{A}[\StLMod{H}]$ such that the functor $X\Lotimes*_A-$ is an
equivalence of $\infty$-categories.

\begin{remark}
  Suppose that the Hopf algebra $H$ is cocommutative, so that the monoidal
  {$\infty$-}categories $\LMod{H}$ and $\StLMod{H}$ are symmetric. In this case, there are
  isomorphisms of categories
  \[
    \Bimod{B}{A}[\LMod{H}]\cong\LMod{B\otimes
      A^\rev}[\LMod{H}]\cong\LMod{\smashprod{(B\otimes A^\rev)}{H}}[\Mod{\kk}]
  \]
  that are compatible with the forgetful functor to $\LMod{H}$. It follows that,
  in the above discussion,
  we can replace the $\infty$-category $\Bimod{B}{A}[\StLMod{H}]$ by the
  (equivalent) Hopfological derived $\infty$-category
  \[
    \DerCat{B\otimes A^\rev,H}\simeq\MLoc[\Weq[st]]{\LMod{\smashprod{(B\otimes A^\rev)}{H}}[\Mod{\kk}]}.
  \]
  This corresponds to the setup considered in~\cite[Section~8]{Qi14}.
\end{remark}

\subsection{Hopfological analogues of classical invariants}

\subsubsection{Additive and localising invariants}
\label{subsec:additive_localising_inv}

Recall that $\PrStLcg$ denotes the $\infty$-category of essentially small
idempotent-complete stable $\infty$-categories and exact functors between them,
endowed with the tensor product described~\cite[Section~3.1]{BGT13}. Since the tensor product on
$\StLMod{H}$ restricts to the full subcategory $\stLmod{H}\subseteq\StLMod{H}$
of compact objects, we may regard $\stLmod{H}$ as an algebra object
of $\PrStLcg$. Moreover, the induced right action of $\stLmod{H}$ on
$\DerCat{A,H}$ preserves the full subcategory
$\perf{A,H}\subseteq\DerCat{A,H}$ of compact objects and, therefore,
\[
  \perf{A,H}\in\RMod{\stLmod{H}}[\PrStLcg].
\]
The upshot is that, according to
\Cref{prop:monoidal_functors_preserve_algs_mods}, every lax monoidal
functor
\[
  E\colon\PrStLcg\longrightarrow\M
\]
to some monoidal $\infty$-category $\M$ induces a functor
\[
  E\colon\RMod{\stLmod{H}}[\PrStLcg]\longrightarrow\RMod{E(\stLmod{H})}[\M],\qquad
  \C\longmapsto E(\C).
\]
In particular, the object $E(\perf{A,H})\in\M$ inherits a right action
of the algebra object $E(\stLmod{H})\in\Alg{\M}$.

\begin{remark}
  If the Hopf algebra $H$ is cocommutative, then $\stLmod{H}$ is a symmetric
  monoidal $\infty$-category. In this case, it is also natural to ask for the
  functor $E$ to be lax symmetric monoidal, so that $E(\stLmod{H})$
  is a commutative algebra object of $\M$.
\end{remark}

If the target monoidal $\infty$-category $\M$ is additive,\footnote{The
  definition of additive category extends verbatim to the $\infty$-categorical
  setting, see for example~\cite[Definition~C.1.5.1]{Lur18SAG}.} one may ask $E$
to be an \emph{additive invariant}~\cite[Definition~6.1]{BGT13}, which is to say
that $E$ sends reflexive localisation sequences
\[
  \begin{tikzcd}
    \A\rar[hookrightarrow,shift left]{i}&\B\rar[shift left]{p}\lar[shift
    left]&\C\lar[hook',shift left]
  \end{tikzcd}
\]
to direct-sum decompositions $E(\B)\simeq E(\A)\oplus E(\B)$ in $\M$; here, by
definition,
\begin{itemize}
\item the functor $i$ is fully faithful and embeds $\A$ into $\B$ as the kernel
  of $p$,
\item the functors $p$ and $i$ admits right adjoints, and
\item the right adjoint to $p$ is fully faithful.
\end{itemize}
Similarly, if the target monoidal $\infty$-category $\M$ is stable, then one may ask that $E$
is a \emph{localising invariant}~\cite[Definition~8.1]{BGT13}, which is to say
that $E$ sends localisation sequences
\[
  \begin{tikzcd}
    \A\rar[hookrightarrow]{i}&\B\rar{p}&\C
  \end{tikzcd}
\]
to bicartesian squares
\[
  \begin{tikzcd}
    E(\A)\rar\dar\ar[phantom]{dr}[description]{\square}&E(\B)\dar\\
    0\rar&E(\C)
  \end{tikzcd}
\]
in $\M$, called fibre-cofibre sequences; here, by definition, the functor $i$ is
fully faithful, the composition $p\circ i$ vanishes, and the canonical functor
$(\B/\A)^\flat\stackrel{\sim}{\to}\C$ is an equivalence. Notice that every
localising invariant is additive, but not conversely. If $\M$ is a presentably
symmetric monoidal $\infty$-category, one typically also asks that $E$ preserves
filtered colimits (both for additive and for localising invariants); invariants
with this additional property are sometimes called \emph{finitary}.

\begin{remark}
  One may also consider additive and localising invariants that are not lax
  monoidal. In this case, there is no natural action of $E(\stLmod{H})$ on
  $E(\perf{A,H})$.
\end{remark}

Suppose now that the Hopf algebra $H$ acts trivially on $A$. Recall that there
is a monoidal functor $\perf{\kk}\to\stLmod{H}$
(\Cref{subsec:linear_structures}) that, by
\Cref{prop:monoidal_functors_preserve_algs_mods}, induces a forgetful functor
\[
  \RMod{\stLmod{H}}[\PrStLcg]\longrightarrow\Mod{\perf{\kk}}[\PrStLcg].
\]
Suppose given a lax symmetric monoidal functor
\[
  E\colon\Mod{\perf{\kk}}[\PrStLcg]\longrightarrow\M,
\]
to a presentably symmetric monoidal stable $\infty$-category. The functor $E$
refines to a lax symmetric monoidal functor
\[
  E\colon\Mod{\perf{\kk}}[\PrStLcg]\longrightarrow\Mod{E(\perf{\kk})}[\M],
\]
where the right-hand side is equipped with the relative tensor product over the
commutative algebra object $E(\perf{\kk})\in\CAlg{\M}$. In this case,
\Cref{thm:splitting_formula} implies that there exists a canonical K{\"u}nneth-type
comparison map
\[
  E(\perf{A})\otimes_{E(\perf{\kk})}E(\stLmod{H})\longrightarrow E(\perf{A,H}).
\]
It is an interesting (and often difficult) question to determine when this map
is invertible. For example, this is the case for Hochschild
homology~\cite[Example~8.9]{CT12} and topological Hochschild
homology~\cite[Theorem~14.1]{BM24} (see also~\cite[p.~4564]{AMN18}). If we work
over a perfect field of positive characteristic and the algebra $A$ is
finite-dimensional and of finite global dimension, then this is also the case for
periodic topological Hochschild homology~\cite{BM24,AMN18} (see in particular
the paragraph immediately after~\cite[Theorem~1.3]{AMN18}). In general, for
trivial reasons, the above comparison map is invertible if $\perf{A}$ admits a full exceptional sequence, and $E$ is an
additive invariant. Indeed, in this case $E(\perf{A})\simeq
E(\perf{\kk})^{\oplus n}$, where $n$ is the size of the full exceptional
sequence, see for example~\cite[Section~2.4.2]{Tab15}.

\subsubsection{Algebraic $K$-theory of Hopfological derived $\infty$-categories}
\label{subsubsec:K-theory}

The discussion in \Cref{subsec:additive_localising_inv} applies, in particular,
to (connective) algebraic $K$-theory
\[
  K\colon\PrStLcg\longrightarrow\Spectra,\qquad \C\longmapsto K(\C),
\]
which is a finitary additive invariant~\cite[Proposition~7.10]{BGT13}, and to
non-connective algebraic $K$-theory
\[
  \mathbf{K}\colon\PrStLcg\longrightarrow\Spectra\qquad\C\longmapsto\mathbf{K}(\C),
\]
which is a finitary localising invariant~\cite[Theorem~9.8]{BGT13}; both of
these are lax symmetric monoidal functors with target the presentable stable
$\infty$-category $\Spectra$ of spectra~\cite[Theorem~1.13]{BGT14}. Hence,
keeping in mind~\Cref{rmk:trivial_example}, we have
\begin{align*}
  K(A,H)\coloneqq K(\perf{A,H})&\in\RMod{K(\kk,H)}(\Spectra),&K(\kk,H)&\coloneqq K(\stLmod{H}),\intertext{and}\mathbf{K}(A,H)\coloneqq\mathbf{K}(\perf{A,H})&\in\RMod{\mathbf{K}(\kk,H)}(\Spectra),&\mathbf{K}(\kk,H)&\coloneqq\mathbf{K}(\stLmod{H}).
\end{align*}
The functor
\[
  \textstyle\pi_*\colon\Spectra\longrightarrow\prod_{n\in\ZZ}\Mod{\ZZ},\qquad
  X\longmapsto\textstyle\bigoplus_{n\in\ZZ}\pi_n(X),
\]
that associates to a spectrum the the direct sum of its stable homotopy groups
is lax (symmetric) monoidal~\cite[Section 7.1.1]{Lur17}. Consequently, the graded abelian group
$K_*(A,H)=\pi_*(K(A,H))$ is a graded right module over the graded ring
$K_*(\kk,H)=\pi_*(\kk,H)$. In particular, the Grothendieck group $K_0(A,H)$ is a
right module over the Grothendieck ring $K_0(\kk,H)$, compare
with~\cite[Remark~7.17]{Qi14}.

\begin{remark} If the Hopf algebra $H$ is cocommutative, then $K(\kk,H)$ is an
  $\EE_\infty$-ring spectrum. In this case, the graded ring $K_*(\kk,H)$ is
  graded-commutative and the Grothendieck ring $K_0(\kk,H)$ is commutative.
\end{remark}

\subsubsection{Hopfological Hochschild (co)homology}
\label{subsubsec:Hochschild}

In this subsection we propose Hopfological analogues of Hochschild (co)homology.
Below, we specify to our setting the discussions in~\cite[Section~2]{BR23} (for
the definition Hochschild cohomology) and in~\cite[Section~4.5]{HSS17} (for the
definition of Hochschild homology). In the forthcoming discussion, we need to
assume that the Hopf algebra $H$ is cocommutative, so that $\StLMod{H}$ and
\[
  \Mod{\StLMod{H}}[\PrStL]=\LMod{\StLMod{H}}[\PrStL]
\]
are symmetric monoidal $\infty$-categories.

We begin with the definition of Hochschild cohomology. Let $A\in\Alg{\LMod{H}}$
be an algebra object. By \Cref{thm:props_of_Mod}, we may regard the
Hopfological derived $\infty$-category
\[
  \DerCat{A,H}\simeq\LMod{A}[\StLMod{H}]
\]
as a right $\StLMod{H}$-module. Consider the presentable stable
$\infty$-category
\[
  \Bimod{A}{A}[\StLMod{H}]\simeq\LFun<\StLMod{H}>{\DerCat{A,H}}{\DerCat{A,H}},
\]
We may then consider the diagonal bimodule
\[
  A\in\Bimod{A}{A}[\StLMod{H}],
\]
which is the $A$-bimodule corresponding to the identity functor of
$\DerCat{A,H}$. As explained in~\cite[Section~2.1]{BR23}, way may then also consider
the endomorphism algebra object
\[
  \HH[\bullet]{A,H}\coloneqq\EEnd<A\text{-}A>{A}\in\StLMod{H},
\]
which we refer to as the \emph{Hopfological Hochschild cohomology of $A$}.

\begin{remark}
  According to~\cite[Proposition~2.1.5]{BR23}, the Hopfological Hochschild
  cohomology object $\HH[\bullet]{A,H}\in\StLMod{H}$ carries a canonical algebra
  structure over the $\EE_2$-operad that, by Dunn's Additivity
  Theorem~\cite[Theorem~5.1.2.2]{Lur17}, corresponds to a pair of compatible
  associative algebra structures on $\HH[\bullet]{A,H}$; in this case, these
  algebra structures are induced by the composition of morphisms in $\Bimod{A}{A}[\StLMod{H}]$ and
  by its natural monoidal structure, given by the relative tensor product over $A$.
  In other words, Deligne's Conjecture is
  valid in this setting.
\end{remark}

We now turn our attention to Hochschild homology. The Hopfological derived
$\infty$-category
\[
  \DerCat{A,H}\simeq\LMod{A}[\StLMod{H}]
\]
is a dualisable object of the symmetric monoidal $\infty$-category
$\Mod{\StLMod{H}}[\PrStL]$, see~\cite[Remark~4.8.5.17 and~4.8.5.18]{Lur17}.
This means that there exist coevaluation and evaluation functors
\[
  \coev\colon\StLMod{H}\longrightarrow\DerCat{A,H}\otimes_{\StLMod{H}}\DerCat{A,H}^\vee
\]
and
\[
  \ev\colon\DerCat{A,H}^\vee\otimes_{\StLMod{H}}\DerCat{A,H}\longrightarrow\StLMod{\kk},
\]
where
\begin{align*}
  \DerCat{A,H}^\vee&\coloneqq\LFun<\StLMod{H}>{\DerCat{A,H}}{\StLMod{H}}\\
                   &\simeq\LFun<\StLMod{H}>{\LMod{A}[\StLMod{H}]}{\LMod{\kk}[\StLMod{H}]}\\
                   &\simeq\Bimod{\kk}{A}[\StLMod{H}]\\
                   &\simeq\RMod{A}[\StLMod{H}];
\end{align*}
moreover, these evaluation and coevaluation functors must satisfy suitable
variants of the triangle identities~\cite[Definition~4.6.1.7]{Lur17}.
Following~\cite[Section~4.5]{HSS17}, we define the \emph{Hopfological Hochschild
  homology} of $A$ as the image of the monoidal unit $\kk\in\StLMod{H}$ under
the composite
\[
  \begin{tikzcd}
    \kk\dar[mapsto]&\StLMod{H}\rar{\coev}\dar[dotted]&\DerCat{A,H}\otimes_{\StLMod{H}}\DerCat{A,H}^\vee\dar{\wr}\\
    \HH*[\bullet]{A,H}&\StLMod{H}&\DerCat{A,H}^\vee\otimes_{\StLMod{H}}\DerCat{A,H}\lar{\ev}
  \end{tikzcd}
\]
where the existence of the vertical equivalence on the right uses the assumption
that the monoidal structure on $\Mod{\StLMod{H}}[\PrStL]$ is symmetric.

\begin{remark}
  Similar to the classical case, there is a non-commutative calculus involving
  an action of the Hopfological Hochschild cohomology of $A$ on its Hopfological
  Hochschild homology, see~\cite[Section~2.3]{BR23} for details.
\end{remark}

\begin{remark}
  In~\cite{QS22,QS23}, Qi and Sussan introduce Hopfological analogues of
  Hochschild cohomology for the Hopf algebra $H=\kk[\partial]/(\partial^p)$ over
  a field of characteristic $p>0$. Their construction involves explicit
  (co)chain complexes and it would be interesting to compare their
  constructions to those in this section.
\end{remark}

\begin{remark}
  Hopfological Hochschild (co)homology agrees with classical Hochschild
  (co)homology when $H$ is the graded algebra of dual numbers in variable of
  cohomological degree $1$ and, consequently, $A$ is a differential graded algebra.
\end{remark}

\subsection{Descent for the Hopfological derived $\infty$-category}
\label{subsec:descent}

In this section, we assume that our Hopf algebra $H$ is cocommutative, so that
the monoidal structure on $\StLMod{H}$ is symmetric. In this subsection we translate some descent results for the Hopfological derived $\infty$-category from those for $\StLMod{H}$. We first need to fix some terminology.

The following is a particular case of a result from~\cite{Ram23}.

\begin{lemma}[{\cite[Lemma 3.23]{Ram23}}]\label{lemma-ramzi}
    Let $\C_\bullet\colon I^\lhd\to \mathrm{CAlg}(\mathrm{Pr^L_\mathrm{st}})$ be a limit diagram, and let $\C_\infty$ denote the value at the cone point $\infty$ in $I^\lhd$. For an algebra $A\in \Alg{\C_\infty}$, the canonical map
    \[
    \LMod{A}[\C_\infty] \to \lim_{i\in I} \LMod{A_i}[\C_i]
    \]
    is an equivalence of stable $\infty$-categories, where $A_i$ denotes the image of the projection functor $\Alg{\C_\infty}\to \Alg{\C_i}$.
\end{lemma}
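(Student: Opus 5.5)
The strategy is to deduce the statement from the base-change formula of \Cref{thm:props_of_Mod}(3), together with the fact that the relative tensor product commutes with limits in the appropriate sense. We use the identification
\[
  \LMod{A}[\C_\infty]\otimes_{\C_\infty}\C_i\simeq\LMod{A_i}[\C_i],
\]
where $\C_i$ is regarded as a left $\C_\infty$-module via the symmetric monoidal projection $\C_\infty\to\C_i$. To see this, apply \Cref{thm:props_of_Mod}(3) with $\N=\C_i$, and note that $\LMod{A}[\C_i]$, computed through the induced action of $\C_\infty$ on $\C_i$, is the same as $\LMod{A_i}[\C_i]$. The point is that the action of $A$ on $\C_i$ factors through the monoidal functor $\C_\infty\to\C_i$, which sends $A$ to $A_i$. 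These identifications are natural in $i$, so the canonical map of the lemma becomes
\[
  \LMod{A}[\C_\infty]\otimes_{\C_\infty}\lim_i\C_i\longrightarrow\lim_i\bigl(\LMod{A}[\C_\infty]\otimes_{\C_\infty}\C_i\bigr),
\]
because $\C_\infty\simeq\lim_i\C_i$. The lemma is therefore equivalent to the assertion that the functor $\M\coloneqq\LMod{A}[\C_\infty]\otimes_{\C_\infty}-$ preserves this particular limit in $\Mod{\C_\infty}[\PrStL]$.

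The key step is to show that $\LMod{A}[\C_\infty]$ is a dualisable object of $\Mod{\C_\infty}[\PrStL]$. Its dual is $\RMod{A}[\C_\infty]$, as in the discussion of Hochschild homology above, citing~\cite[Remark~4.8.5.17]{Lur17}. Since $\M$ is dualisable, tensoring with it is equivalent to the internal hom $\LFun<\C_\infty>{\M^\vee}{-}$. Internal homs are right adjoints, so they preserve all limits in $\Mod{\C_\infty}[\PrStL]$.

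It remains to check that the limit $\lim_i\C_i$, formed in $\CAlg{\PrStL}$, is also a limit in $\Mod{\C_\infty}[\PrStL]$ of the diagram $i\mapsto\C_i$. Limits in $\CAlg{\PrStL}$ and in $\PrStL$ are computed on underlying $\infty$-categories. Likewise, the forgetful functor $\Mod{\C_\infty}[\PrStL]\to\PrStL$ creates limits, being a right adjoint that is monadic and conservative. Hence both limits have the underlying $\infty$-category $\C_\infty$, and its induced $\C_\infty$-module structure is the regular one.

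I expect the main obstacle to be the naturality bookkeeping. We must verify that, under the base-change equivalences, the composite map agrees with the canonical map $\LMod{A}[\C_\infty]\to\lim\LMod{A_i}[\C_i]$ given by restriction along the projections. I would handle this by noting that both maps are induced by the functoriality of $\LMod{-}[-]$ in monoidal left adjoints, and that the equivalence of \Cref{thm:props_of_Mod}(3) is natural in $\N$.

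As an alternative route, one can argue concretely. Both sides are monadic over $\C_\infty\simeq\lim\C_i$ via forgetful functors, and the monads $A\otimes-$ agree, since the projections are monoidal and preserve colimits. The Barr--Beck--Lurie theorem would then give the equivalence directly. The only point to verify in that route is that the limit of the forgetful functors is conservative and preserves the relevant geometric realisations, which holds levelwise.
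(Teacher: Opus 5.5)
The paper gives no proof of this lemma. It is quoted from \cite[Lemma~3.23]{Ram23} and used as a black box in \Cref{prop-descent-stmod}, so there is no internal argument to compare yours with.

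\textbf{Your main argument.} Judged on its own, it is a complete and correct proof, and all three ingredients are sound.
\begin{enumerate}
\item The base-change equivalence of \Cref{thm:props_of_Mod}(3) identifies $\LMod{A}[\C_\infty]\otimes_{\C_\infty}\C_i$ with $\LMod{A_i}[\C_i]$.
\item Dualisability of $\LMod{A}[\C_\infty]$ in $\Mod{\C_\infty}[\PrStL]$ makes $\LMod{A}[\C_\infty]\otimes_{\C_\infty}-$ a right adjoint, so it preserves limits.
\item The forgetful functors from $\Mod{\C_\infty}[\PrStL]$ to $\PrStL$, and from there to large $\infty$-categories, preserve and detect limits. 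Hence the cone $\C_\infty\to\C_i$, viewed in $\Mod{\C_\infty}[\PrStL]$, is a limit cone.
\end{enumerate}
The naturality you worry about is harmless. The comparison functor $\LMod{A}[\C_\infty]\otimes_{\C_\infty}\N\to\LMod{A}[\N]$ is induced by the action pairing $(M,X)\mapsto M\otimes X$, which is natural in $\C_\infty$-linear functors $\N\to\N'$. For $\N=\C_\infty$ it is the canonical identification. So the composite you obtain is exactly the restriction map of the lemma.

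\textbf{Your Barr--Beck alternative.} Conservativity and geometric realisations are not the only points to check. You also need $\lim_i U_i$ to have a left adjoint computed levelwise. This holds because the transition functors, being monoidal, commute with the free functors $A_i\otimes-$. Moreover, `the monads agree' is best replaced by the comparison criterion: a functor between monadic $\infty$-categories over the same base that commutes with the free functors is an equivalence \cite[Section~4.7.3]{Lur17}.

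\textbf{A more elementary route.} $\LMod{A}[\C]$ is the fibre over $A$ of the forgetful functor from pairs (algebra, left module) in $\C$ to $\Alg{\C}$. Both of these are $\infty$-categories of algebras over $\infty$-operads, namely $\mathcal{LM}^\otimes$ and $\mathcal{A}\mathrm{ss}^\otimes$. Forming such algebras commutes with limits of monoidal $\infty$-categories along strong monoidal functors: those limits are computed fibrewise on underlying $\infty$-categories, and algebras are inert-preserving sections. Since fibres commute with limits, the claim follows.

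This route uses no presentability, stability or symmetry, and does not need the transition functors to preserve colimits. Your route relies on the heavier base-change theorem and on the symmetric monoidal structure of $\Mod{\C_\infty}[\PrStL]$. In exchange it proves more: $\LMod{A}[\C_\infty]\otimes_{\C_\infty}-$ commutes with all limits of $\C_\infty$-linear presentable categories, not only those arising from a limit diagram of commutative algebras.
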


As a consequence of our definition of the Hopfological derived $\infty$-category, we obtain the following result.

\begin{proposition}\label{prop-descent-stmod}
    Let $A\in \Alg{\LMod{H}}$. Assume that there is an equivalence of symmetric monoidal stable $\infty$-categories
    \[
    \StLMod{H} \stackrel{\simeq}{\longrightarrow} \lim_{i\in I} \StLMod{H_i} 
    \]
   for some collection of cocommutative Hopf algebras $H_i$. Then there is an equivalence of stable $\infty$-categories
    \[
    \DerCat{A,H} \stackrel{\simeq}{\longrightarrow} \lim_{i\in I} \DerCat{A_i,H_i},
    \]
    where $A_i$ is the image of $A$ under the projection functor $\StLMod{H} \to \StLMod{H_i}$.
\end{proposition}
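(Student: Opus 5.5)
The plan is to deduce the statement directly from \Cref{lemma-ramzi}, applied to the limit diagram in $\CAlg{\PrStL}$ determined by the given equivalence.

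First, I would check that the hypothesis really produces a limit diagram there. Since $H$ and each $H_i$ are cocommutative, the $\infty$-categories $\StLMod{H}$ and $\StLMod{H_i}$ are presentably symmetric monoidal stable $\infty$-categories by \Cref{thm:MLoc-presentably-stable-monoidal}. They are therefore objects of $\CAlg{\PrStL}$. I read the hypothesis as giving a diagram $\C_\bullet\colon I^\lhd\to\CAlg{\PrStL}$ with $\C_\infty=\StLMod{H}$ and $\C_i=\StLMod{H_i}$, whose transition functors are colimit-preserving symmetric monoidal, and such that the canonical comparison functor is an equivalence.

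To see that this is a limit in $\CAlg{\PrStL}$, I would argue in two steps. The forgetful functor $\CAlg{\PrStL}\to\PrStL$ creates limits, since forgetful functors from commutative algebras in a symmetric monoidal $\infty$-category create limits. In turn, limits in $\PrStL$ are computed in $\cats$. Hence the equivalence on underlying $\infty$-categories suffices. This verification is the step I expect to require the most care, because the statement only says the functor is an equivalence of symmetric monoidal $\infty$-categories. I will interpret that as saying the cone lives in $\CAlg{\PrStL}$, with the projections being the colimit-preserving functors used to define $A_i$.

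Second, I would view $A\in\Alg{\LMod{H}}$ as an algebra object of $\StLMod{H}$, using \Cref{thm:SS-transfer-model_structure} and the monoidal localisation of \Cref{thm:MLoc-presentably-monoidal}. Applying \Cref{lemma-ramzi} then gives an equivalence
\[
  \LMod{A}[\StLMod{H}]\stackrel{\sim}{\longrightarrow}\lim_{i\in I}\LMod{A_i}[\StLMod{H_i}],
\]
where $A_i$ is the image of $A$ under the monoidal projection, as in \Cref{prop:monoidal_functors_preserve_algs_mods}.

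Finally, I would identify both sides with Hopfological derived $\infty$-categories. The left-hand side is $\DerCat{A,H}$ by \Cref{def:Hopfological_derived_category}. On the right, each $\LMod{A_i}[\StLMod{H_i}]$ is what the statement denotes $\DerCat{A_i,H_i}$: this follows from the same definition whenever $A_i$ comes from an $H_i$-module algebra, and in general it holds by interpreting the notation through the equivalence $\DerCat{-,H}\simeq\LMod{-}[\StLMod{H}]$. I also need the transition functors of the limit to be the ones induced on module categories, which again follows from \Cref{prop:monoidal_functors_preserve_algs_mods}.
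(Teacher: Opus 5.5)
Your proposal is correct and follows the paper's proof: you view $A$ as an algebra object of $\StLMod{H}$ via \Cref{thm:SS-transfer-model_structure}, apply \Cref{lemma-ramzi}, and use $\DerCat{A,H}\simeq\LMod{A}[\StLMod{H}]$. Your extra checks only make explicit what the paper leaves implicit: that the cone is a limit in $\CAlg{\PrStL}$, and that $\LMod{A_i}[\StLMod{H_i}]\simeq\DerCat{A_i,H_i}$, which holds for every $A_i$ because the equivalence in \Cref{thm:SS-transfer-model_structure}(2) is essentially surjective, so no reinterpretation of notation is needed.
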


\begin{proof}
Recall from \Cref{thm:SS-transfer-model_structure} that we can consider $A$ as
an algebra object in $\StLMod{H}$. Now, the conclusion is a direct consequence
of \Cref{lemma-ramzi}, since $\DerCat{A,H}$ is defined as
$\LMod{A}[\StLMod{H}]$, see \Cref{def:Hopfological_derived_category}.
\end{proof}

In order to show the relevance of the previous result, let us provide some examples where it is particularly useful.

\begin{example}
  Let $G$ be a finite group, and let $\kk$ be a field of positive characteristic
  $p$ dividing the order of the group $G$. We let $\kk G$ denote the group
  algebra with the usual Hopf algebra structure. Recall that, in this case, $\kk
  G$ is cocommutative. Let $\F$ denote a family\footnote{That is, a collection of subgroups closed under conjugation and passage to subgroups.} of subgroups of $G$ containing the elementary abelian $p$-subgroups.

  In this case, it was shown by Mathew in \cite[Corollary 9.16]{Mat16} that the restriction to subgroups induces an equivalence of symmetric monoidal stable $\infty$-categories
  \[
   \StLMod{\kk G} \stackrel{\simeq}{\longrightarrow} \lim_{G/E\in \mathcal{O}_\F(G)^{\mathrm{op}}} \StLMod{\kk E} 
  \]
 where $\mathcal{O}_\F(G)$ denotes the orbit category of $G$ of homogeneous $G$-spaces with isotropy in $\F$. Let $A\in \Alg{\LMod{\kk G}}$. As a direct consequence of \Cref{prop-descent-stmod}, we obtain an equivalence of stable $\infty$-categories
  \[
   \DerCat{A,\kk G} \stackrel{\simeq}{\longrightarrow} \lim_{G/E\in \mathcal{O}_\F(G)^{\mathrm{op}}} \DerCat{A,\kk E} 
  \]
  where $A$ has the structure of a $\kk E$-algebra by restricting the action via the inclusion $E\subseteq G$.
\end{example}

\begin{remark}
One might wonder if there are similar descent results for more general cocommutative Hopf algebras. We refer the interested reader to \cite[Section 4.3]{Mat16} for a discussion of the topic.
\end{remark}

\section{Hopfological algebra in rigid monoidal stable $\infty$-categories}
\label{sec:spectral_Hopfological_algebra}

In this section we outline a generalisation of Hopfological algebra in which we
replace the derived $\infty$-category of the ground field by an arbitrary
rigidly-compactly generated stable $\infty$-category.

\subsection{rigidly-compactly generated monoidal stable $\infty$-categories}

We begin by recalling a standard definition.

\begin{definition}[{\cite[Definition~4.6.1.1]{Lur17}}]
  \label{def:duality_datum}
  Let $\M$ be a monoidal $\infty$-category. A \emph{duality datum} is
  a tuple $(X,\ldual{X},\ev,\coev)$ where $X,\ldual{X}\in\M$ is a pair of objects and
  \[
    \ev\colon \ldual{X}\otimes
    X\longrightarrow\unit\qquad\text{and}\qquad\coev\colon\unit\longrightarrow
    X\otimes \ldual{X}
  \]
  are morphisms in $\M$ that define a duality datum in the ordinary monoidal
  category $\Ho{\M}$. Explicitly, the following diagrams are required to commute
  in the homotopy category $\Ho{\M}$:
  \[
    \begin{tikzcd}[column sep=tiny]
      &X\otimes \ldual{X}\otimes X\drar{\id\otimes\ev}\\
      X\ar{rr}[swap]{\id}\urar{\coev\otimes\id}&&X
    \end{tikzcd}\qquad
    \begin{tikzcd}[column sep=tiny]
      &\ldual{X}\otimes X\otimes \ldual{X}\drar{\id\otimes\coev}\\
      \ldual{X}\ar{rr}[swap]{\id}\urar{\ev\otimes\id}&&\ldual{X}
    \end{tikzcd}
  \]
  In this case, we say that $X$ is \emph{left dualisable} and $\ldual{X}$ is
  \emph{right dualisable}. An object $X\in\M$ is \emph{dualisable} if it is left and
  right dualisable. Finally, the monoidal $\infty$-category
  $\M$ is \emph{rigid} if every object is left and right dualisable.
\end{definition}

\begin{remark}
  \label{rmk:rigidity_can_be_checked_on_HoM}
  A monoidal $\infty$-category $\M$ is rigid if and only if its monoidal
  homotopy category $\Ho{\M}$ is rigid in the usual sense, compare
  with~\cite[Definitions~2.10.11]{EGNO15}.
\end{remark}

\begin{notation}
  In the context of \Cref{def:duality_datum}, the object $\ldual{X}\in\M$ is
  uniquely determined up to isomorphism in $\Ho{\M}$, see for
  example~\cite[Proposition~2.10.5]{EGNO15}. If we set $Y\coloneqq\ldual{X}$, we
  also write $\rdual{Y}\coloneqq X$. In this case, we say that $\ldual{X}$ is a
  \emph{left dual} of $X$, and that $\rdual{Y}$ is a \emph{right dual} of $Y$.
  Moreover, $\rdual{(\ldual{X})}\simeq X$ and $\ldual{(\rdual{Y})}\simeq Y$.
\end{notation}

\begin{remark}
  \label{rmk:dualisables_monoidal_subcat}
  Let $\M$ be monoidal $\infty$-category and $X,Y\in\M$ a pair of dualisable
  objects. It is easy to show that the $\ldual{Y}\otimes\ldual{X}$ is a left
  dual of $X\otimes Y$ and, similarly, $\rdual{Y}\otimes\rdual{X}$ is a right
  dual, see for example~\cite[Exercise~2.10.7]{EGNO15}. Consequently, since the
  unit object $\unit\in\M$ is easily seen to be dualisable, the dualisable
  objects of $\M$ form a monoidal subcategory.
\end{remark}

\begin{remark}
  \label{rmk:rigid_internal_homs}
  Let $\M$ be a monoidal $\infty$-category and $X\in\M$ a dualisable object. As
  an immediate consequence of the triangle identities, there are adjunctions
  \[
    (-\otimes X)\dashv(-\otimes\ldual{X})\qquad\text{and}\qquad(X\otimes-)\dashv(\rdual{X}\otimes-),
  \]
  compare with~\cite[Proposition~2.10.8]{EGNO15}.
  Consequently, if $\M$ is rigid, then it is biclosed with right internal $\operatorname{Hom}$
  functor
  \[
   \hom<\M>[r]{X}{Y}\coloneqq Y\otimes\ldual{X},\qquad X,Y\in\M,
  \]
  and left internal $\operatorname{Hom}$ functor
  \[
    \hom<\M>[l]{X}{Y}\coloneqq\rdual{X}\otimes Y,\qquad X,Y\in\M.
  \]
  In  particular, for a fixed object $X\in\M$, the functors
  and
  \[
    \hom<\M>[r]{X}{-}=(-\otimes\ldual{X})\qquad\text{and}\qquad\hom<\M>[l]{X}{-}=(\rdual{X}\otimes-)
  \]
  preserve colimits (for they have further right adjoints).
\end{remark}

The following definition is standard in tensor-triangular geometry, see for
example~\cite[Section~2.1]{Ste18}.

\begin{definition}
  A presentably symmetric monoidal stable $\infty$-category $\K$ is
  \emph{rigidly-compactly generated} if it is compactly generated as an
  $\infty$-category\footnote{Equivalently, $\Ho{\K}$ is a compactly-generated
    triangulated category.} and the dualisable objects of $\K$ are precisely its
  compact objects. In this case, the full subcategory $\K^\omega\subseteq\K$ of
  compact objects is a rigid monoidal stable subcategory of $\K$ and, in particular,
  contains the unit object (\Cref{rmk:dualisables_monoidal_subcat}).
\end{definition}

\begin{example}
  The following are examples of rigidly-compactly generated symmetric monoidal
  stable $\infty$-categories:
  \begin{itemize}
  \item The derived $\infty$-category of (differential graded) modules over a
    commutative (differential graded) ring.
  \item The stable $\infty$-category $\Spectra$ of spectra. More generally, the
    stable $\infty$-category of module spectra over a commutative ring spectrum.
  \item The stable $\infty$-category of modules over a finite-dimensional
    cocommutative Hopf algebra.
  \end{itemize}
\end{example}

Our setup also permits us to consider graded variants of the theory (compare
with~\Cref{variant:graded}).

\begin{proposition}
  \label{prop:grading}
  Let $G$ be an abelian group, considered as a monoidal category with only
  identity morphisms. Let $\K$ be a rigidly-compactly generated symmetric
  monoidal stable $\infty$-category. Then, the $\infty$-category
  \[
    \textstyle\Fun{G}{\K}\simeq\prod_{g\in G}\K
  \]
  of functors $G\to\K$ is a rigidly-compactly generated symmetric monoidal
  stable $\infty$-category. Here, $\Fun{G}{\K}$ is endowed with the Day
  convolution tensor product described in~\cite[Section~2.4]{Lur15}.
\end{proposition}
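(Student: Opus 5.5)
Since $G$ is discrete and abelian (in particular symmetric monoidal with only identities), Day convolution on $\Fun{G}{\K}$ is computed by the explicit formula
\[
  \textstyle(X\otimes Y)_g\simeq\bigoplus_{h+h'=g}X_h\otimes Y_{h'},
\]
since the left Kan extension along the addition map $G\times G\to G$ is a coproduct over the discrete fibres. The plan is to first record the structural facts that follow directly from \cite[Section~2.4]{Lur15}. Because $G$ is symmetric monoidal and $\K$ is presentably symmetric monoidal, $\Fun{G}{\K}$ is presentably symmetric monoidal with unit $\unit[0]$, meaning $\unit$ placed in degree $0$ and $0$ elsewhere. Stability and presentability are inherited from the equivalence $\Fun{G}{\K}\simeq\prod_{g\in G}\K$, and the tensor product preserves colimits in each variable by the formula above.

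Next, I will establish compact generation. For $g\in G$ and $X\in\K$, write $X[g]$ for the functor concentrated in degree $g$ with value $X$; this is the left adjoint to evaluation at $g$. Evaluation preserves colimits, since colimits are computed pointwise. Hence $X[g]$ is compact whenever $X$ is compact, and the objects $X[g]$ with $X\in\K^\omega$ and $g\in G$ detect equivalences. They therefore generate. By the standard description of compact objects in a product, an object $(X_g)_g$ is compact if and only if finitely many $X_g$ are nonzero and each is compact. Equivalently, the compact objects are the finite sums $\bigoplus X_i[g_i]$ with $X_i\in\K^\omega$.

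The remaining task, which I expect to be the main point, is to show that dualisable equals compact. First, compact implies dualisable. The formula gives $X[g]\otimes Y[h]\simeq(X\otimes Y)[g+h]$, so $\ldual{X}[-g]$ is a dual of $X[g]$, with evaluation and coevaluation induced from those of $X$ (here $G$ being a group is essential). Dualisable objects are closed under finite sums, so every compact object is dualisable. Conversely, suppose $X$ is dualisable. Then $\hom{X}{-}\simeq\ldual{X}\otimes-$ preserves colimits (\Cref{rmk:rigid_internal_homs}). Since $\Map{X}{-}\simeq\Map{\unit[0]}{\ldual{X}\otimes-}$ and $\unit[0]$ is compact (because $\unit\in\K^\omega$, as $\K$ is rigidly-compactly generated), $X$ is compact. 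This argument only uses that the unit is compact, so the converse direction is formal. Finally, I will note that symmetry of the monoidal structure follows from the symmetry of $G$ and of $\K$, which completes the proof.
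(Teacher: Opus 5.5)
Your proposal is correct and follows essentially the same route as the paper. It uses compactness of $\unit[0]$ and $\Map{X}{-}\simeq\Map{\unit[0]}{\ldual{X}\otimes-}$ to show that dualisable objects are compact, and it writes compact objects as finite sums of objects $X[g]$ with $X\in\K^\omega$, which are dualisable because the $\unit[g]$ are $\otimes$-invertible (the paper phrases this as a reduction to $X[0]$ via $\unit[-g]\otimes X[g]\simeq X[0]$). The only difference is minor: you get compact generation directly from the left adjoints to evaluation in $\prod_{g\in G}\K$, whereas the paper deduces it and the Day convolution structure from $\Fun{G}{\K}\simeq\Fun{G}{\Spaces}\otimes_{\Spaces}\K\simeq\Fun{G}{\Spectra}\otimes_{\Spectra}\K$.
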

\begin{proof}
  The statement is well-known to experts; we provide a proof for the sake of
  completeness. We begin with the following observations:
  \begin{itemize}
  \item The $\infty$-category $\Fun{G}{\K}$ is stable since limits and colimits
    in functor $\infty$-categories are computed pointwise,
    see~\cite[Proposition~1.1.3.1]{Lur17}.
  \item That the $\infty$-category $\Fun{G}{\K}$ is compactly generated follows
    from the equivalence of $\infty$-categories (see for
    example~\cite[p.~281]{Jas24a})
    \[
      \Fun{G}{\K}\simeq\Fun{G}{\Spaces}\otimes_{\Spaces}\K.
    \]
    Indeed, $\Fun{G}{\Spaces}$ is compactly
    generated~\cite[Proposition~5.3.5.12]{Lur09} the tensor product of
    compactly-generated $\infty$-categories is compactly generated
    (see~\cite[Example~5.3.6.8]{Lur09} and~\cite[Remark~4.8.1.8]{Lur17}). Here,
    $\Spaces$ denotes the $\infty$-category of spaces, which is the monoidal
    unit of the $\infty$-category $\PrL$.
  \end{itemize}

  The Day convolution tensor product on $\Fun{G}{\K}$ is described in
  \cite[Section~2.4]{Lur15} in the special case where $\K=\Spectra$ is the
  stable $\infty$-category of spectra. More
  precisely,~\cite[Corollary~2.3.9]{Lur15} shows that there is a symmetric
  monoidal functor
  \[
    \cats\longrightarrow\PrStL,\qquad \C\longmapsto \Fun{\C^\op}{\Spectra},
  \]
  where $\cats$ denotes the $\infty$-category of small $\infty$-categories
  endowed with the cartesian monoidal structure. Thus, if we regard $G$ as a
  monoidal category with only identity morphisms, then $\Fun{G}{\Spectra}$
  admits a symmetric monoidal structure whose tensor product preserves colimits
  in each variable separately. We also need the explicit formula for the Day
  convolution tensor product~\cite[Remark~2.3.10]{Lur15}:
  \[
    \textstyle(X^\bullet\otimes Y^\bullet)^g=\coprod_{h+k=g}X^h\otimes Y^k,\qquad X^\bullet,Y^\bullet\in\Fun{G}{\Spectra}.
  \]
  The symmetric monoidal structure on $\Fun{G}{\K}$ is then obtained via the
  equivalences of $\infty$-categories
  \begin{align*}
    \Fun{G}{\K}&\simeq\Fun{G}{\Spaces}\otimes_{\Spaces}\K\\
               &\simeq\Fun{G}{\Spaces}\otimes_{\Spaces}(\Spectra\otimes_{\Spectra}\K)\\
               &\simeq\Fun{G}{\Spectra}\otimes_{\Spectra}\K,
  \end{align*}
  where we use that $\Spectra$ is the unit of the tensor product in $\PrStL$.

  It remains to show that $\Fun{G}{\K}$ is rigidly-compactly generated. Firslty, recall that the
  $\infty$-category $\Fun{G}{\Spaces}$ admits the representable presheaves
  $G(-,g)$, $g\in G$, as a canonical set of compact generators. Secondly,
  we note that
  \[
    G(h,g)=\begin{cases}*&h=g,\\\emptyset&h\neq g.\end{cases}
  \]
  For an object $X\in\K$ and an element $g\in G$, set
  $X(g)\coloneqq G(-,g)\otimes X$. Under the equivalence
  \[
    \textstyle\Fun{G}{\Spaces}\otimes_{\Spaces}\K\simeq\Fun{G}{\K}\simeq\prod_{g\in\G}\K,
  \]
  the object $X(g)$ corresponds to the tuple with value $X$ in degree $g$ and
  that vanishes in every other degree. The set
  \[
    \set{X(g)\in\Fun{G}{\K}}[g\in G,\ X\in\K^\omega]
  \]
  forms a set of compact generators of $\Fun{G}{\K}$, compare
  with~\cite[Remark~2.4.1]{Lur15}. In particular, the unit object
  $\unit[\K](0)\in\Fun{G}{\K}$ is compact. From this, it follows that every
  dualisable object of $\Fun{G}{\K}$ is compact: For an object
  $X^\bullet\in\Fun{G}{\K}$, there are equivalences
  \[
    \Map{X^\bullet}{-}\simeq\Map{\unit[\K](0)}{\hom{X^\bullet}{-}}
  \]
  of functors $\Fun{G}{\K}\to\Spaces$; hence, the claim follows from the
  fact that the functor
  \[
    \hom{X^\bullet}{-}\colon\Fun{G}{\K}\longrightarrow\Fun{G}{\K}
  \]
  preserves all colimits (\Cref{rmk:rigid_internal_homs}). We now show,
  conversely, that every compact object of $\Fun{G}{\K}$ is dualisable. Let
  $X^\bullet\in\Fun{G}{\K}$ be a compact object. The above description of a set
  of compact generators for $\Fun{G}{\K}$ implies that
  \[
    \textstyle X^\bullet\simeq \bigoplus_{i=1}^nX_i(g_i)
  \]
  for some $n\geq1$, compact objects $X_1,\dots,X_n\in\K^\omega$ and elements
  ${g_1,\dots,g_n\in G}$. Therefore,
  it suffices to show that $X_i(g_i)$ is dualisable for each $1\leq i\leq n$.
  Considering the tensor products
  \[
    \unit[\K](-g_i)\otimes X_i(g_i)\simeq X_i(0),\qquad 1\leq i\leq n,
  \]
  and noticing that $\unit[\K](g)$ is $\otimes$-invertible for each $g\in G$, we
  are reduced to show that $X(0)\in\Fun{G}{\K}$ is dualisable for each compact
  object $X\in\K^\omega$. This follows from the fact that the objects of
  $\Fun{G}{\K}$ that are concentrated in degree $0\in G$ form a monoidal
  subcategory and the assumption that every compact object of $\K$ is
  dualisable. This finishes the proof.
\end{proof}

\begin{example}
  Let $\kk$ be a commutative ring spectrum (for example, the Eilenberg--Mac Lane
  ring spectrum of an ordinary commutative ring). By \Cref{prop:grading}, the
  $\infty$-category
  \[
    \textstyle\Fun{\ZZ}{\LMod{\kk}[\Spectra]}\simeq\prod_{n\in\ZZ}\LMod{\kk}[\Spectra]
  \]
  of $\ZZ$-graded $\kk$-module spectra is a rigidly-compactly generated stable
  $\infty$-category.
\end{example}

\subsection{The base monoidal $\infty$-category}

\begin{setting}
  \label{setting:spectral_Hopfological_algebra}
  Throughout this section, we fix a rigidly-compactly generated symmetric
  monoidal stable $\infty$-category $\K=(\K,\otimes,\unit)$, which plays the
  role of the symmetric monoidal stable $\infty$-category $\DerCat{\Mod{\kk}}$ in
  what follows.
\end{setting}

Let $\M$ be a symmetric monoidal $\infty$-category that admits geometric
realisations of simplicial objects, and suppose that these are preserved by the
tensor product in each variable separately. Then, the $\infty$-category
$\Alg{\M}$ of algebra objects of $\M$ admits a monoidal structure such
that the forgetful functor ${\Alg{\M}\to\M}$ is
monoidal~\cite[Example~3.2.4.4]{Lur17}. This permits us to make the following
definition, where we write
\[
  \coAlg{\M}\coloneqq\Alg{\M^\op}^\op
\]
for the
  $\infty$-category of \emph{coalgebra objects} in $\M$.

\begin{definition}
  The $\infty$-category of \emph{bialgebra objects} in $\K$ is
  \[
    \biAlg{\K}\coloneqq\coAlg{\Alg{\K}}.
  \]
\end{definition}

\begin{remark}
  There is an equivalence of $\infty$-categories~\cite[Corollary~A.0.17]{Erg22}
  \[
    \biAlg{\K}\simeq\Alg{\coAlg{\K}}.
  \]
\end{remark}

\begin{remark}
  \label{rmk:auxiliary-bialgebras}
  A bialgebra object $B\in\biAlg{\K}$ is equipped, in particular, with a
  commutative diagram of algebra morphisms
  \[
    \begin{tikzcd}[column sep=small]
      &\unit\\
      B\urar{\varepsilon}&&\unit\ar{ll}{\eta}\ular[swap]{\id}
    \end{tikzcd}
  \]
  Using \Cref{prop:change_of_algebra} and the canonical identification
  $\LMod{\unit}[\K]\simeq\K$. see~\cite[Proposition~3.4.2.1]{Lur17}, we obtain a
  commutative diagram of colimit-preserving functors
  \[
    \begin{tikzcd}[column sep=small]
      &\K\drar{\id}\dlar[swap]{\varepsilon^*}\\
      \LMod{B}[\K]\ar{rr}[swap]{\eta^*}&&\K
    \end{tikzcd}
  \]
\end{remark}

\begin{proposition}[{\cite[Proposition~3.16 and~Corollary~3.19]{Bea23}
    and~\cite[Section~3.2]{Lei22}}]
  \label{prop:LModBialg}
  Let $B\in\biAlg{\K}$ be a bialgebra object. The following statements hold:
  \begin{enumerate}
  \item The $\infty$-category $\LMod{B}[\K]$ admits a monoidal structure.
  \item The canonical colimit-preserving functors
    \[
      \K\longrightarrow\LMod{B}[\K]\qquad\text{and}\qquad\LMod{B}[\K]\longrightarrow\K
    \]
    described in \Cref{rmk:auxiliary-bialgebras} are monoidal with respect to
    the above monoidal structure on $\LMod{B}[\K]$.
  \end{enumerate}
\end{proposition}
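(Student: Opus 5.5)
The plan is to realise the monoidal structure on $\LMod{B}[\K]$ through the general principle that modules over a bialgebra inherit a tensor product from the coalgebra structure. By definition, $B\in\biAlg{\K}=\coAlg{\Alg{\K}}$ is a coalgebra object in the monoidal $\infty$-category $\Alg{\K}$. We regard the assignment
\[
  \Alg{\K}\longrightarrow\LMod{\K}[\PrL],\qquad R\longmapsto\LMod{R}[\K],
\]
as a functor; here $\PrL$ is taken with Lurie's tensor product, or equivalently we regard it as landing in presentable $\K$-linear categories. On morphisms it sends $f$ to restriction $f^*$; since restriction is contravariant, we use instead the contravariant functor $R\mapsto\LMod{R}[\K]$ with restriction functors. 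This assignment is lax symmetric monoidal: the external tensor product
\[
  \LMod{R}[\K]\times\LMod{S}[\K]\longrightarrow\LMod{R\otimes S}[\K],\qquad (M,N)\longmapsto M\otimes N,
\]
is induced from the symmetric monoidal structure of $\K$. One can see this via \Cref{cor:Deligne-tensor-product}, which identifies $\LMod{R}[\K]\otimes_\K\LMod{S}[\K]\simeq\LMod{R\otimes S}[\K]$, so the functor is in fact strong monoidal into $\Mod{\K}[\PrL]$.

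Step 1. Organise the contravariant functor $\Alg{\K}^\op\to\Mod{\K}[\PrL]$ as a lax monoidal functor, with structure maps given by the Deligne-type equivalences and with $\unit\mapsto\LMod{\unit}[\K]\simeq\K$.

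Step 2. A coalgebra in $\Alg{\K}$ is an algebra in $\Alg{\K}^\op$. By \Cref{prop:monoidal_functors_preserve_algs_mods}, a lax monoidal functor sends algebras to algebras, so $\LMod{B}[\K]$ becomes an algebra object in $\Mod{\K}[\PrL]$. This is a monoidal structure whose tensor product is the composite
\[
  \LMod{B}[\K]\times\LMod{B}[\K]\longrightarrow\LMod{B\otimes B}[\K]\xrightarrow{\Delta^*}\LMod{B}[\K],
\]
with unit $\varepsilon^*\unit$. This proves (1).

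Step 3. For (2), the functor $\K\to\LMod{B}[\K]$ is the unit map of this algebra object in $\Mod{\K}[\PrL]$, which is automatically monoidal by \Cref{rmk:section_identity_DH}-type reasoning, namely algebras under the unit. For the forgetful functor $\eta^*$, note that $\eta\colon\unit\to B$ is a map of coalgebras in $\Alg{\K}$. Functoriality of Step 2 in coalgebra maps then makes $\eta^*\colon\LMod{B}[\K]\to\LMod{\unit}[\K]\simeq\K$ a morphism of algebra objects, hence monoidal. The commutative triangle of \Cref{rmk:auxiliary-bialgebras} follows from $\varepsilon\eta=\id$ as coalgebra maps.

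The main obstacle is Step 1: making the contravariant module functor coherently lax monoidal at the $\infty$-categorical level, rather than only on objects. I would first try Lurie's framework of $\LMod{}$ as a functor out of $\Alg{\K}$ with values in $\K$-module categories, using the straightening of the coCartesian fibration $\LMod{}(\K)\to\Alg{\K}$. Its monoidal refinement comes from the symmetric monoidal structure on the $\infty$-operad $\mathcal{LM}^\otimes$ and \cite[Theorem~4.8.4.6]{Lur17}. This is where I would rely on the cited constructions of Beardsley and Leibniz rather than reprove them.
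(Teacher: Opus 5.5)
Your proposal is correct and is essentially the argument behind the cited results of Beardsley and Leip; the paper gives no proof of its own beyond the citation. In that argument a bialgebra is an algebra in $\Alg{\K}^\op$, the restriction-functoriality $R\mapsto\LMod{R}[\K]$ is lax monoidal, and $\unit$ is both initial and terminal in $\biAlg{\K}$, so $\eta$ and $\varepsilon$ are bialgebra maps that yield the monoidal functors $\eta^*$ and $\varepsilon^*$.

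One caveat concerns your claim of strong monoidality into $\Mod{\K}[\PrL]$. \Cref{cor:Deligne-tensor-product} is natural with respect to extension of scalars, not restriction, so that claim needs an extra step. You could pass to right adjoints of Lurie's symmetric monoidal induction functor, or identify restriction along $f\colon R\to R'$ with tensoring by $R'$ viewed as an $R$-$R'$-bimodule. Lax monoidality alone already suffices for the statement.
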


\begin{remark}
  Let $B\in\biAlg{\K}$ be a bialgebra object. It follows from
  \Cref{prop:LModBialg} that we may regard the monoidal $\infty$-category
  $\LMod{B}$ as an augmented algebra object of the $\infty$-category
  $\Mod{\K}[\PrStL]$ of $\K$-linear presentable stable $\infty$-categories
  (compare with~\Cref{rmk:section_identity_DH}).
\end{remark}

The following definition is motivated by the well-known characterisations of
finite-dimensional Hopf algebras via Tannaka--Krein Duality, see for
example~\cite[Theorems~5.2.3 and~5.3.12]{EGNO15}.

\begin{definition}
  \label{def:Hopf_algebra_object}
  Let $H\in\biAlg{\K}$ be a bialgebra object whose underlying object of $\K$ is
  compact (equivalently, dualisable). We say that $H$ is a \emph{Hopf algebra
    object} if the monoidal subcategory
  $\LMod{H}[\K^\omega]\subseteq\LMod{H}[\K]$ is rigid. Here, we identify
  $\LMod{H}[\K^\omega]$ with the full subcategory of $\LMod{H}[\K]$ spanned by
  the $H$-modules whose underlying object is compact in $\K$.
\end{definition}

\begin{remark}
  For any bialgebra \(H\), the functor
  \[
    \Map{H}{-} \colon \Alg{\K} \to \Spaces
  \]
  admits a canonical lift
  \[
    \Map{H}{-} \colon \Alg{\K} \to \Alg{\Spaces},
  \]
  where \(\Alg{\Spaces}\) denotes the \(\infty\)-category of associative monoids
  in the \(\infty\)-category of spaces. In \cite[Definition
  3.9.7]{lurie2017elliptic}, the author defines a bialgebra\footnote{Note that $H$ is assumed to be commutative and cocommutative in~\cite{lurie2017elliptic}.} to be a Hopf algebra
  if this functor factors through \emph{grouplike} associative monoids in
  \(\Spaces\) or, equivalently, if \(\pi_{0}\Map{H}{-}\colon \Alg{\K} \to \operatorname{Mon}\) factors the \(1\)-category of groups. 
It is expected that this definition agrees with the one given above; however, we leave the verification of this equivalence to the interested reader.
\end{remark}

\begin{proposition}
  \label{prop:perf_in_pvd-tensor_ideal}
  Let $H$ be a Hopf algebra object of $\K$. Then, $\LMod{H}[\K]^\omega$ is a
  two-sided $\otimes$-ideal in $\LMod{H}[\K^\omega]$.
\end{proposition}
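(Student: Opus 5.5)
First, I will check that $\LMod{H}[\K]^\omega$ is contained in $\LMod{H}[\K^\omega]$. By \Cref{thm:props_of_Mod}, $\LMod{H}[\K]$ is compactly generated by the free modules $H\otimes X$ with $X\in\K^\omega$. Since $H$ is compact, $H\otimes X$ is compact in $\K$: it is dualisable by \Cref{rmk:dualisables_monoidal_subcat}, and $\K$ is rigidly-compactly generated. Hence these generators lie in $\LMod{H}[\K^\omega]$. Every compact object of $\LMod{H}[\K]$ is a retract of a finite colimit of such generators, and the forgetful functor to $\K$ is exact. The compact objects of $\K$ form a thick subcategory, so the underlying object of any compact $H$-module is compact. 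This gives the inclusion $\LMod{H}[\K]^\omega\subseteq\LMod{H}[\K^\omega]$.

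For the ideal property, take $P\in\LMod{H}[\K]^\omega$ and $M\in\LMod{H}[\K^\omega]$. By rigidity of $\LMod{H}[\K^\omega]$ (\Cref{def:Hopf_algebra_object}), $M$ has a left dual $\ldual{M}$ and a right dual $\rdual{M}$ inside $\LMod{H}[\K^\omega]$. \Cref{rmk:rigid_internal_homs} then gives adjunctions $(-\otimes M)\dashv(-\otimes\ldual{M})$ and $(M\otimes-)\dashv(\rdual{M}\otimes-)$, now as endofunctors of the whole of $\LMod{H}[\K]$. The duality datum lives in the monoidal subcategory, and the triangle identities hold in $\LMod{H}[\K]$.

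The right adjoints $-\otimes\ldual{M}$ and $\rdual{M}\otimes-$ preserve all colimits, because the tensor product on $\LMod{H}[\K]$ preserves colimits in each variable. This uses the monoidal structure from \Cref{prop:LModBialg}, whose underlying tensor product is computed in $\K$ and whose colimits are detected by the forgetful functor. A left adjoint whose right adjoint preserves filtered colimits preserves compact objects. Hence $P\otimes M$ and $M\otimes P$ are compact, so $\LMod{H}[\K]^\omega$ is a two-sided $\otimes$-ideal. The argument is symmetric in the two sides once both duals are available.

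I expect the main obstacle to be justifying that the monoidal structure of \Cref{prop:LModBialg} has a tensor product preserving colimits separately in each variable. I would argue this from the monoidality of the forgetful functor, which is conservative and preserves colimits, together with the corresponding property in $\K$. A second point to handle is that a dual computed in the full subcategory $\LMod{H}[\K^\omega]$ is also a dual in $\LMod{H}[\K]$. This holds because the subcategory is full and contains the unit.
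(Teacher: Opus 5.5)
Your proof is correct and follows the paper's strategy. First you show that compact $H$-modules have compact underlying object: you use compact generation by free modules together with thickness of $\K^\omega$, where the paper instead notes that the forgetful functor has the colimit-preserving right adjoint $\ldual{H}\otimes-$. Then you use the adjunctions $(-\otimes M)\dashv(-\otimes\ldual{M})$ and $(M\otimes-)\dashv(\rdual{M}\otimes-)$ coming from rigidity to see that tensoring with $M$ preserves compact objects. You apply this directly to an arbitrary compact $P$, whereas the paper first reduces to the generator $H$; the direct version is cleaner.

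The obstacle you anticipate can also be bypassed as in \Cref{rmk:rigid_internal_homs}. Since $\ldual{M}$ and $\rdual{M}$ again lie in the rigid subcategory $\LMod{H}[\K^\omega]$, the functors $-\otimes\ldual{M}$ and $\rdual{M}\otimes-$ have further right adjoints. Hence they preserve all colimits without any separate argument about the tensor product on $\LMod{H}[\K]$.
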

\begin{proof}
  We prove first that every compact object of $\LMod{H}[\K]$ lies in
  $\LMod{H}[\K^\omega]$. For this, it suffices to observe that the forgetful
  functor $\LMod{H}[\K]\to\K$ preserves compact objects, for its right adjoint
  \[
    \hom<\K>{H}{-}\simeq(\ldual{H}\otimes-)\colon\K\longrightarrow\LMod{H}[\K]
  \]
  preserves filtered colimts (in fact, it preserves all colimits); here, we use
  the implicit assumption that the underlying object of $H$ is compact in $\K$,
  hence dualisable.

  To show that $\LMod{H}[\K]^\omega$ is a two-sided $\otimes$-ideal in
  $\LMod{H}[\K^\omega]$, we observe that, by \Cref{thm:props_of_Mod}, it is
  enough to prove the following statement:
  \begin{itemize}
  \item For each pair of compact objects $Y'\in\K^\omega$ and for each pair
    of objects $X,Y\in\LMod{H}[\K^\omega]$, we have
    \[
      X\otimes(H\otimes Y')\otimes Y\in \LMod{H}[\K]^\omega.
    \]
  \end{itemize}
  Since
  \[
    X\otimes(H\otimes Y')\otimes Y\simeq X\otimes H\otimes
    (Y'\otimes Y),
  \]
  we are reduced to prove that $X\otimes H$ and $H\otimes Y$ are compact in
  $\LMod{H}[\K]$. Keeping in mind \Cref{rmk:rigid_internal_homs}, this follows
  from the fact that $H\in\LMod{H}[\K]$ is compact, for there are equivalences
  \begin{align*}
    \Map<H>{X\otimes
    H}{-}&\simeq\Map<H>{H}{\hom<H>[l]{X}{-}}\intertext{and}\Map<H>{H\otimes
           Y}{-}&\simeq\Map<H>{H}{\hom<H>[r]{Y}{-}}
  \end{align*}
  of functors $\LMod{H}[\K]\to\Spaces$.
\end{proof}

\Cref{prop:perf_in_pvd-tensor_ideal} permits us to make the following
definition.

\begin{defprop}
  Let $H$ be a Hopf algebra object of $\K$. The \emph{stable $\infty$-category
    of $H$-module objects in $\K$} is the rigidly-compactly generated monoidal
  stable $\infty$-category
  \[
    \StLMod{H}[\K]\coloneqq\Ind[\LMod{H}[\K^\omega]/\LMod{H}[\K]^\omega].
  \]
\end{defprop}
\begin{proof}
  In view of \Cref{prop:perf_in_pvd-tensor_ideal}, the Verdier quotient
  \[
    \LMod{H}[\K^\omega]/\LMod{H}[\K]^\omega
  \]
  is well defined an inherits a monoidal structure from $\LMod{H}[\K^\omega]$
  such that the canonical localisation functor
  \[
    \LMod{H}[\K^\omega]\longrightarrow\LMod{H}[\K^\omega]/\LMod{H}[\K]^\omega
  \]
  is monoidal~\cite[Proposition~2.2.1.9]{Lur17}. Since monoidal functors clearly preserve dualisable
  objects and the localisation functor is dense, we conclude that
  $\LMod{H}[\K^\omega]/\LMod{H}[\K]^\omega$ is a rigid monoidal
  $\infty$-category. Finally, since $\StLMod{H}[\K]$ is idempotent complete, its
  collection of dualisable objects is closed under retracts. This shows
  that $\StLMod{H}$ is indeed a rigidly-compactly generated monoidal stable
  $\infty$-category.
\end{proof}

By construction, a version of Krause's recollement
(\Cref{thm:Krauses-recollement}) is also available in this context.

\begin{proposition}
  \label{prop:Krauses-recollement-general}
  There is a recollement of compactly-generated
  stable $\infty$-categories
  \begin{center}
    \begin{tikzcd}
      \StLMod{H}[\K]\ar[hookrightarrow]{r}[description]{i}&\Ind[\LMod{H}[\K^\omega]]\ar{r}[description]{p}\ar[shift
      right=0.5em]{l}[swap]{i_L}\ar[shift
      left=0.5em]{l}{i_R}&\LMod{H}[\K].\ar[shift
      right=0.5em,hook]{l}[swap]{p_L}\ar[hook',shift left=0.5em]{l}{p_R}
    \end{tikzcd}
  \end{center}
  Moreover, in the above recollement,
  \begin{itemize}
  \item the functor
    \[
      p_L\colon\LMod{H}[\K]\hooklongrightarrow\Ind[\LMod{H}[\K^\omega]]
    \]
    is the
    inclusion of a two-sided $\otimes$-ideal,
  \item the functor
    \[
      i\colon\StLMod{H}[\K]\hooklongrightarrow\Ind[\LMod{H}[\K^\omega]]
    \]
    is the inclusion of a two-sided $\otimes$-ideal.
  \end{itemize}
\end{proposition}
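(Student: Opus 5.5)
The recollement comes from Ind-completing the Verdier localisation sequence of small idempotent-complete stable $\infty$-categories
\[
  \LMod{H}[\K]^\omega\hooklongrightarrow\LMod{H}[\K^\omega]\longrightarrow\LMod{H}[\K^\omega]/\LMod{H}[\K]^\omega .
\]
The plan is as follows.

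First, identify the outer terms. Apply $\Ind$ to the fully faithful exact inclusion $\LMod{H}[\K]^\omega\hookrightarrow\LMod{H}[\K^\omega]$. This gives a fully faithful colimit-preserving functor
\[
  p_L\colon\Ind[\LMod{H}[\K]^\omega]\longrightarrow\Ind[\LMod{H}[\K^\omega]] .
\]
Its source is $\LMod{H}[\K]$, because $\LMod{H}[\K]$ is compactly generated by \Cref{thm:props_of_Mod}. Applying $\Ind$ to the Verdier quotient functor gives, by definition, a colimit-preserving functor
\[
  i_L\colon\Ind[\LMod{H}[\K^\omega]]\longrightarrow\StLMod{H}[\K],
\]
which preserves compact objects.

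Second, produce the adjoint triples using the standard facts about $\Ind$ of a localisation sequence (e.g.\ \cite[Section~5]{BGT13}, or Neeman's theorem on compactly generated localisations).
\begin{itemize}
\item The right adjoint $p$ of $p_L$ preserves colimits, since $p_L$ preserves compact objects. Hence $p$ has a further right adjoint $p_R$, which is fully faithful because $p_L$ is.
\item The right adjoint $i$ of $i_L$ is fully faithful, since $\Ind$ of an essentially surjective-up-to-retracts Verdier quotient is a Bousfield localisation. Its essential image is the right orthogonal of $p_L(\LMod{H}[\K])$, i.e.\ the objects $X$ with $\Map{p_L(C)}{X}\simeq0$ for all compact $C$.
\item This right orthogonal is exactly $\ker p$: by adjunction $\Map{C}{pX}\simeq\Map{p_LC}{X}$, and the compact objects $C$ generate $\LMod{H}[\K]$.
\item The functor $i$ preserves colimits, because $i_L$ preserves compact objects. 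It therefore has a right adjoint $i_R$.
\end{itemize}
This yields the recollement. The only real input is exactness of the sequence, which is precisely the definition of the quotient, and \Cref{prop:perf_in_pvd-tensor_ideal} guarantees that the quotient is meaningful monoidally.

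Third, check the $\otimes$-ideal claims. Extend the monoidal structure on $\LMod{H}[\K^\omega]$ to $\Ind[\LMod{H}[\K^\omega]]$ by Ind-extension, so that it preserves colimits in each variable.
\begin{itemize}
\item \textbf{The image of $p_L$.} It is the localising subcategory generated by $\LMod{H}[\K]^\omega$. Since $\LMod{H}[\K]^\omega$ is a two-sided ideal in the compacts (\Cref{prop:perf_in_pvd-tensor_ideal}), and tensoring preserves colimits in each variable, the localising subcategory it generates is again a two-sided ideal. This follows by a standard two-step closure argument: first fix a compact object, then let it vary.
\item \textbf{The image of $i$.} It is the right orthogonal $(p_L\LMod{H}[\K])^{\perp}$. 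Let $X$ lie in this orthogonal, let $Z$ be arbitrary, and let $C$ be compact in the ideal. Using the internal-hom adjunctions, I would rewrite $\Map{C}{X\otimes Z}$ as $\Map{C\otimes\ldual{Z}}{X}$ when $Z$ is compact; here rigidity of $\LMod{H}[\K^\omega]$ from \Cref{def:Hopf_algebra_object} and \Cref{rmk:rigid_internal_homs} are used. Since $C\otimes\ldual{Z}$ lies in the ideal, this mapping space vanishes. General $Z$ is a filtered colimit of compact objects, and both $C$ and $C\otimes$ of compacts are compact, so the vanishing passes to $X\otimes Z$. The symmetric argument handles $Z\otimes X$, using right duals.
\end{itemize}

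The main obstacle is this last step. One has to verify that the right orthogonal of a compactly generated ideal is an ideal, and this genuinely needs rigidity of the compact part so that the duality trick applies on both sides; in the non-symmetric setting the left and right duals must be tracked carefully. Everything else is formal $\Ind$-machinery.
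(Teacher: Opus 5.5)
Your proposal is correct and follows essentially the same route as the paper: both Ind-complete the defining localisation sequence to obtain the recollement, deduce the ideal property of the image of $p_L$ from \Cref{prop:perf_in_pvd-tensor_ideal}, and prove that the image of $i$ is an ideal by using rigidity of $\LMod{H}[\K^\omega]$ to move a compact tensor factor across the mapping space. The difference is only in bookkeeping: the paper dualises the compact factors $X,Y$ in $X\otimes i(Z)\otimes Y$ and tests against all of $p_L(\LMod{H}[\K])$, whereas you dualise the compact $Z$ and test against compact $C$. One harmless slip: with the paper's convention $(-\otimes Z)\dashv(-\otimes\ldual{Z})$ from \Cref{rmk:rigid_internal_homs}, the identity you need is $\Map{C}{X\otimes Z}\simeq\Map{C\otimes\rdual{Z}}{X}$, and the $Z\otimes X$ case uses $\ldual{Z}$, so your left and right duals are swapped, which does not matter since rigidity provides both.
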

\begin{proof}
  The required recollement is obtained by applying the
  (monoidal) $\Ind$-completion functor to the defining localisation sequence
  \[
    \begin{tikzcd}
      \LMod{H}[\K^\omega]/\LMod{H}[\K]^\omega&\LMod{H}[\K^\omega]\lar[two
      heads]&\LMod{H}[\K]^\omega\lar[hook'],
    \end{tikzcd}
  \]
  see for example~\cite[Theorem 5.6.1]{Kra10}. From this, the claim about the
  functor $p_L$ follows immediately.
   Now, we verify that the essential image of $i$ is a two-sided ideal. Since
   the functor $i$ preserves colimits and the tensor product on $\StLMod{\K}$
   preserves colimits in each variable separately, it is enough to show that for each pair
   of objects
   $X,Y\in\LMod{H}[K^\omega]$ and each object $Z\in\StLMod{H}[\K]$, the object
   \[
     X\otimes i(Z)\otimes Y
   \]
   lies in essential image of $i$.
   The existence of the recollement implies that this is the case if and
   only if
   \[
     \Hom{p_L(W)}{X\otimes i(Z)\otimes Y}=0,\qquad W\in\LMod{H}[\K].
   \]
   Given that $H$ is a Hopf algebra object of $\K$, both $X$ and $Y$ are dualisable. It
   follows that
   \[
     \Hom{p_L(W)}{X\otimes i(Z)\otimes Y}\simeq \Hom{X^\vee\otimes p_L(W)\otimes {}^\vee
       Y}{i(Z)}\stackrel{!}{=}0,
   \]
   where the latter $\operatorname{Hom}$ vanishes since the essential image of
   $p_L$ is a two-sided $\otimes$-ideal. In fact, a similar reasoning shows that
   the essential image of $i$ is closed under the left and right internal
   $\operatorname{Hom}$ functors.
\end{proof}

\subsection{Hopfological derived $\infty$-categories}

\begin{definition}
  \label{def:Hopfological_der-cat-K}
  Let $H$ be a Hopf algebra object of $\K$ and $A\in\Alg{\StLMod{H}[\K]}$ an
  algebra object $\StLMod{H}[\K]$. The \emph{Hopfological derived
    $\infty$-category} of $A$ is the compactly-generated stable
  $\infty$-category
  \[
    \DerCat{A,H}_\K\coloneqq\LMod{A}[\StLMod{H}[\K]].
  \]
\end{definition}

In order to illustrate \Cref{def:Hopfological_der-cat-K}, we record an analogue of
\Cref{thm:splitting_formula} in this setting.

\begin{theorem}
  \label{thm:splitting_formula-K}
  Suppose that $H$ acts trivially on $A\in\Alg{\LMod{H}[\K]}$, so that $A$ is simply
  the datum of an algebra object of $\K$. Then, there is an equivalence of
  $\infty$-categories
  \[
    \DerCat{A,H}_\K\simeq\LMod{A}[\K]\otimes_{\K}\StLMod{H}[\K]
  \]
  right-tensored over $\StLMod{H}[\K]$.
\end{theorem}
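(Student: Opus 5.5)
The argument follows the proof of \Cref{thm:splitting_formula} almost verbatim, with $\DerCat{\Mod{\kk}}$ replaced by $\K$. First, I would make the hypothesis precise. By \Cref{rmk:auxiliary-bialgebras} and \Cref{prop:LModBialg}, the counit $\varepsilon\colon H\to\unit$ induces a colimit-preserving monoidal functor $\varepsilon^*\colon\K\to\LMod{H}[\K]$. This functor sends compact objects to objects with compact underlying object, so it restricts to $\K^\omega\to\LMod{H}[\K^\omega]$. Composing with the monoidal localisation $\LMod{H}[\K^\omega]\to\LMod{H}[\K^\omega]/\LMod{H}[\K]^\omega$ and applying the monoidal functor $\Ind$ (using $\Ind[\K^\omega]\simeq\K$, since $\K$ is compactly generated) gives a colimit-preserving monoidal functor
\[
  \phi\colon\K\longrightarrow\StLMod{H}[\K].
\]
This is the analogue of $\DerCat{\Mod{\kk}}\to\StLMod{H}$ from \Cref{subsec:linear_structures}. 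Saying that $H$ acts trivially on $A$ means that $A$ is, by definition, the image of an algebra $A_0\in\Alg{\K}$ under the functor $\Alg{\K}\to\Alg{\StLMod{H}[\K]}$ induced by $\phi$ (\Cref{prop:monoidal_functors_preserve_algs_mods}).

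Second, via $\phi$ (and \cite[Corollary~3.4.1.7]{Lur17}, as in \Cref{rmk:section_identity_DH}), $\StLMod{H}[\K]$ becomes an algebra in $\Mod{\K}[\PrStL]$. In particular, it is a presentable $\infty$-category left-tensored over $\K$, and it is right-tensored over itself compatibly. I would then apply \Cref{thm:props_of_Mod}(3) with $\M=\K$, $\N=\StLMod{H}[\K]$ and the algebra $A_0$ to obtain
\[
  \LMod{A_0}[\K]\otimes_\K\StLMod{H}[\K]\simeq\LMod{A_0}[\StLMod{H}[\K]].
\]
Here the right-hand side means left $A_0$-modules for the $\K$-action on $\N$.

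Third, I would identify $\LMod{A_0}[\StLMod{H}[\K]]$, with $A_0$ acting through the $\K$-action, with $\LMod{\phi(A_0)}[\StLMod{H}[\K]]=\DerCat{A,H}_\K$. This holds because the $\K$-action on $\StLMod{H}[\K]$ is $X\mapsto \phi(-)\otimes X$, and because modules over $A_0$ in an $\N$ obtained by restriction along the monoidal functor $\phi$ coincide with modules over $\phi(A_0)$ in $\N$. This is the same identification used implicitly in the proof of \Cref{thm:splitting_formula}. Compatibility with the right $\StLMod{H}[\K]$-action follows because all equivalences above are equivalences of right $\StLMod{H}[\K]$-modules: the tensor product $-\otimes_\K\StLMod{H}[\K]$ retains the right action of the algebra $\StLMod{H}[\K]$ on itself, and \cite[Theorem~4.8.4.6]{Lur17} is natural in bimodule structures.

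The main obstacle I expect is making the first step rigorous. One must check that $\phi$ is genuinely (symmetric enough) monoidal and that the Verdier quotient and $\Ind$ steps preserve the monoidal structure; this uses \cite[Proposition~2.2.1.9]{Lur17} together with the symmetric monoidality of $\Ind$. The third identification is formal but needs care, because $\StLMod{H}[\K]$ need not be symmetric. I would therefore phrase it using \cite[Corollary~3.4.1.7]{Lur17}, which makes $\StLMod{H}[\K]$ an algebra under $\K$, rather than using any commutativity.
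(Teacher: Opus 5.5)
Your proposal is correct and follows the paper's proof: the paper likewise reads the trivial-action hypothesis as saying that $A$ lies in the image of $\Alg{\K}\to\Alg{\StLMod{H}[\K]}$, and then applies \Cref{thm:props_of_Mod}(3). Your version adds detail the paper omits. You build $\phi$ explicitly as $\Ind$ of $\K^\omega\to\LMod{H}[\K^\omega]\to\LMod{H}[\K^\omega]/\LMod{H}[\K]^\omega$, whereas the paper only cites \Cref{prop:LModBialg}, which on its own yields a functor into $\LMod{H}[\K]$; you also spell out the identification $\LMod{A_0}[\StLMod{H}[\K]]\simeq\LMod{\phi(A_0)}[\StLMod{H}[\K]]$ and the compatibility with the right $\StLMod{H}[\K]$-action, both of which the paper leaves implicit.
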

\begin{proof}
  The assumption
  that $H$ acts trivially on $A$ means that $A$ lies in the image of the
  functor
  \[
    \Alg{\K}\longrightarrow\Alg{\StLMod{H}[\K]}
  \]
  induced by the canonical colimit-preserving monoidal functor
  $\K\to\StLMod{H}[\K]$ from \Cref{prop:LModBialg}. Hence,
  \Cref{thm:props_of_Mod} yields the claimed equivalence of $\infty$-categories
  \[
    \DerCat{A,H}_\K=\LMod{A}[\StLMod{H}[\K]]\simeq\LMod{A}[\K]\otimes_{\K}\StLMod{H}[\K].\qedhere
  \]
\end{proof}

\subsection{Hopfological analogue of Krause's recollement}

We now explain, in the generality of \Cref{def:Hopfological_der-cat-K}, how to lift Krause's recollement
(\Cref{prop:Krauses-recollement-general}) to Hopfological derived
$\infty$-categories. We begin with some recollections. As explained
in~\cite[Remark 1.1.11]{Erg22}, given a lax monoidal functor $f\colon\C\to \D$
between presentably monoidal $\infty$-categories, and an algebra object $A$ of
$\C$, there is a commutative diagram
\[
\begin{tikzcd}
      \LMod{A}[\C]\ar{r}{\bar{f}}\ar{d}{U}&\LMod{f(A)}[\D]\ar{d}{U}\\
      \C\ar{r}{f} & \D 
    \end{tikzcd}
\]
in which the vertical functors are the corresponding forgetful functors. In
other words, the extension-of-scalars functor $\bar{f}$ is compatible with the
forgetful functor.  Moreover, if $f$ is strongly monoidal and commutes with
geometric realisations of simplicial objects, then $\bar{f}$ is also compatible
with the corresponding free-module functors. That is, there is a commutative diagram
\[
\begin{tikzcd}
      \LMod{A}[\C]\ar{r}{\bar{f}}&\LMod{f(A)}[\D]\\
      \C\ar{r}{f} \ar{u}{F_A} & \D \ar{u}{F_{f(A)}}
    \end{tikzcd}
\]

\begin{remark}
\label{rem-adjointability-modules}
 Let $\K$ be a rigidly-compactly generated symmetric monoidal stable
 $\infty$-category, let $H$ be a Hopf algebra object of $\K$, and let $A\in
 \Alg{\Ind[\LMod{H}[\K^\omega]]}$. Using the notation of
 \Cref{prop:Krauses-recollement-general}, set $A_l\coloneqq i_L(A)$ and
 $A_r\coloneqq p(A)$. Then $A_l$ is an algebra object of $\StLMod{H}[\K]$ (since
 $i_L$ is monoidal), and $A_r$ is an algebra object of $\LMod{H}[\K]$ (since $p$
 is monoidal). There is a diagram
\[
    \begin{tikzcd}
       \DerCat{A_l,H}_\K \ar[dotted]{r}[description]{j}\ar{d}[description]{U}& \LMod{A}[\Ind[\LMod{H}[\K^\omega]]] \ar{d}[description]{U} \ar{r}[description]{q}\ar[shift
      right=0.5em]{l}[swap]{j_L}\ar[dotted,shift
      left=0.5em]{l}{j_R}&\LMod{A_r}[\LMod{H}[\K]]\ar[shift
      right=0.5em,dotted]{l}[swap]{q_L}\ar[dotted,shift left=0.5em]{l}{q_R}\ar{d}[description]{U}\\
      \StLMod{H}[\K]\ar[hookrightarrow]{r}[description]{i}\ar[shift left=0.5em]{u}{F_{A_l}} &\Ind[\LMod{H}[\K^\omega]]\ar{r}[description]{p}\ar[shift
      right=0.5em]{l}[swap]{i_L}\ar[shift
      left=0.5em]{l}{i_R}\ar[shift left=0.5em]{u}{F_{A}}&\LMod{H}[\K]\ar[shift
      right=0.5em,hook]{l}[swap]{p_L}\ar[hook',shift
      left=0.5em]{l}{p_R}\ar[shift left=0.5em]{u}{F_{A_r}}
    \end{tikzcd}
  \]
  The existence of the dotted functors follows from the Adjoint Functor Theorem
  \cite[Corollary 5.5.2.9]{Lur17}, together with the following observations.
  Since the forgetful functors are conservative and preserve small limits and
  small colimits, we deduce:
  \begin{itemize}
  \item The functor $j_L$ admits a right adjoint $j$, because $i$ does.
  \item The functor $j$ admits a right adjoint, because $i$ does.
  \item The functor $q$ admits both a left and a right adjoint, because $p$ does.
  \end{itemize}
  Moreover, by the recollection above, the following identities hold:
  \begin{align*}
    U\circ j_L&\simeq i_L\circ U&U\circ q&\simeq p\circ U\\
    j_L\circ F_A&\simeq F_{A_l}\circ i_L&q\circ F_A&\simeq F_{A_r}\circ p\\
    i\circ U&\simeq U\circ j&p_R\circ U&\simeq U\circ q_R.
  \end{align*}
  The last identities are obtained from the previous ones by passing to right adjoints.  
\end{remark}

\begin{theorem}
  \label{thm:Krauses-recollement-Hopfological-general}
  With the notation established in \Cref{rem-adjointability-modules},
 there is a commutative diagram of recollements of compactly-generated stable $\infty$-categories
  \[
    \begin{tikzcd}
       \DerCat{B,H}_\K \ar[hookrightarrow]{r}[description]{j}\dar& \LMod{A}[\Ind[\LMod{H}[\K^\omega]]] \dar\ar{r}[description]{q}\ar[shift
      right=0.5em]{l}[swap]{j_L}\ar[shift
      left=0.5em]{l}{j_R}&\LMod{A_r}[\LMod{H}[\K]]\ar[shift
      right=0.5em,hook]{l}[swap]{q_L}\ar[hook',shift left=0.5em]{l}{q_R}\dar\\
      \StLMod{H}[\K]\ar[hookrightarrow]{r}[description]{i}&\Ind[\LMod{H}[\K^\omega]]\ar{r}[description]{p}\ar[shift
      right=0.5em]{l}[swap]{i_L}\ar[shift
      left=0.5em]{l}{i_R}&\LMod{H}[\K]\ar[shift
      right=0.5em,hook]{l}[swap]{p_L}\ar[hook',shift
      left=0.5em]{l}{p_R}
    \end{tikzcd}
  \]
  where the vertical functors are the corresponding forgetful functors.
\end{theorem}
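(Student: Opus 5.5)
We follow the strategy of the proof of \Cref{thm:Krauses-recollement-Hopfological}, now working purely $\infty$-categorically and using the adjoint functors and identities from \Cref{rem-adjointability-modules}. (We read $\DerCat{B,H}_\K$ in the statement as $\DerCat{A_l,H}_\K$, with $A_l=i_L(A)$.) Thus we already have the adjoint triples $j_L\dashv j\dashv j_R$ and $q_L\dashv q\dashv q_R$. We also have the compatibilities $U\circ j_L\simeq i_L\circ U$, $U\circ j\simeq i\circ U$, $U\circ q\simeq p\circ U$ and $U\circ q_R\simeq p_R\circ U$. The forgetful functors $U$ are conservative and preserve limits and colimits. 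Compact generation of all three categories in the top row follows from \Cref{thm:props_of_Mod}, since each base category is compactly generated. It remains to verify three things: $j$ is fully faithful, $q_R$ and $q_L$ are fully faithful, and the essential image of $j$ equals the kernel of $q$.

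For full faithfulness of $j$, we show that the counit $j_Lj(X)\to X$ is invertible. Applying $U$ and using the identities gives a map $i_Li(UX)\to UX$. We will argue that this is the counit of $i_L\dashv i$, which is invertible because $i$ is fully faithful. The case of $q_R$ is identical, using $U\circ q\circ q_R\simeq p\circ p_R\circ U$ and full faithfulness of $p_R$.

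For $q_L$, the forgetful functor does not obviously intertwine left adjoints. Instead we use free modules: since $p$ is monoidal and colimit-preserving, $q\circ F_A\simeq F_{A_r}\circ p$, and passing to left adjoints of the right adjoints gives $q_L\circ F_{A_r}\simeq F_A\circ p_L$. The unit $\mathrm{id}\to q q_L$ is then invertible on free modules $F_{A_r}(Y)$, because $p p_L\simeq\mathrm{id}$. Both $q$ and $q_L$ preserve colimits, and $\LMod{A_r}[\LMod{H}[\K]]$ is generated under colimits by free modules (bar resolution). Hence the unit is invertible everywhere. Alternatively, full faithfulness of $q_L$ follows formally from that of $q_R$ in an adjoint triple.

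For the kernel condition, first $U q j\simeq p i U\simeq0$, so $qj\simeq0$ by conservativity. Conversely, if $q(X)\simeq0$, then $p(UX)\simeq U q(X)\simeq0$, so $UX$ lies in the image of $i$. Therefore $U$ applied to the unit $X\to jj_L X$, which identifies with $UX\to i i_L UX$, is invertible, and conservativity of $U$ finishes the argument.

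The main obstacle is the identification step: the identities of \Cref{rem-adjointability-modules} are given only up to equivalence, so we must check that under them $U(\varepsilon_X)$ really corresponds to the counit of $i_L\dashv i$, and likewise for the units. We would handle this by writing down the mate/Beck--Chevalley transformation $U j_L\to i_L U$ associated to the commuting square $U j\simeq iU$, and showing it is an equivalence. It suffices to check this on free modules, where it reduces to the monoidality of $i_L$: $i_L(A\otimes Y)\simeq A_l\otimes i_L(Y)$. Both sides preserve colimits, so it then extends to all modules. Once this mate is an equivalence, the triangle identities identify the relevant (co)units, and the remaining steps are formal.
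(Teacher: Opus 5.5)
Your proposal is correct and follows essentially the same route as the paper: you prove full faithfulness of $j$ and $q_R$ by applying the conservative forgetful functors $U$ to the counits and comparing with the bottom recollement, and you get the kernel condition from $U\circ q\circ j\simeq p\circ i\circ U\simeq 0$ together with the unit of $j_L\dashv j$. Your extra check that the Beck--Chevalley mate is invertible, done on free modules, makes precise the identification of $U$ of the (co)units with the (co)units of the bottom row, which the paper takes for granted (note that the mate naturally runs $i_L\circ U\to U\circ j_L$); your remarks on compact generation and on $q_L$ likewise fill in points the paper leaves implicit.
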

\begin{proof}
The proof of \Cref{thm:Krauses-recollement-Hopfological} carries over verbatim to this setting; we briefly indicate the main steps.

To show that the top row forms a recollement, it suffices to verify the following: 

  \begin{itemize}
  \item The functors $j$ and $q_R$ are fully faithful. Since the case of $q_R$
    can be shown analogously, we only prove that $j$ is fully faithful. For
    this, it suffices to prove that, for each object $X\in\DerCat{B,H}_\K$, the counit map $\varepsilon_X\colon j_L
    j(X)\to X$ of the adjunction $j_L\dashv j$ is invertible. Applying the identities from \Cref{rem-adjointability-modules}, we see that $U(\varepsilon_X)$ identifies with the counit map $i_L i(U(X))\to U(X)$ of the adjunction $i_L\dashv i$, which is an equivalence since $i$ is fully faithful. As $U$ is conservative, the claim follows.
  \item The essential image if $j$ is precisely the kernel of $q$. To prove
    this, we observe first that
    \[
      U\circ q\circ j\simeq p\circ i\circ U\simeq 0.
    \]
    Since the functor $U$ is conservative, we deduce that $q\circ i\simeq 0$.
    Suppose now that $X\in\LMod{A}[\Ind[\LMod{H}[\K^\omega]]]$ lies in the kernel of $q$. We
    need to prove that the unit map $\eta_X\colon X\to jj_L(X)$ is invertible. It
    suffices to show that $U(\eta_X)$ is invertible (again, since $U$ is
    conservative). The commutativity of the diagram implies that $U(\eta_X)$ is
    equivalent to the unit map $\alpha_{U(X)}\colon U(X)\to ii_L(U(X))$; to prove that the latter
    map is invertible, it is enough to notice that $p(U(X))\simeq
    U(q(X))\simeq0$ by assumption. Since the kernel of $p$ is precisely the
    image of $i$, it follows that $\alpha_{U(X)}$ is invertible, as required.
  \end{itemize}
This completes the proof.
\end{proof}

\appendix

\section{Hopfological vs.~$Q$-shaped derived categories}

Recently, Holm and J{\o}rgensen introduced certain analogues of classical
derived categories called $Q$-shaped derived categories~\cite{HJ22}; a useful
survey on this emerging research stream can be found in~\cite{HJ24a}. In this
appendix, we attempt to elucidate the formal similarities in the constructions of
Hopfological derived categories and of $Q$-shaped derived categories,
see~\Cref{rmk:H-vs-Q}.

Recall that, given a category $\A$, a monad on $\A$ is a monoid object $T$ in the
category of endofunctors $\A\to\A$, regarded as a monoidal category under
functor composition. A $T$-algebra object in $\A$ is an object $X\in\A$ equipped
with an action morphism $TX\to X$ satisfying suitable axioms.

\begin{theorem}[Lurie]
  \label{thm:Lurie-transfer-model_structure-monad}
  Let $\A$ be a combinatorial model category and $T\colon\A\to\A$ a
  monad on $\A$ whose underlying functor is a left Quillen functor. The
  following statements hold:
  \begin{enumerate}
  \item\label{it:Lurie-transfer-model_structure-monad}\cite[Remark~4.3.3.16]{Lur17} The category $\Alg[T]{\A}$ of $T$-algebra
    objects in $\A$ admits a (right-transferred) combinatorial model structure determined as
    follows:
    \begin{itemize}
    \item A morphism in $\Alg[T]{\A}$ is a weak equivalence if its underlying
      morphism is a weak equivalence in $\A$.
    \item A morphism in $\Alg[T]{\A}$ is a fibration if its underlying morphism
      is a fibration in $\A$.
    \item A morphism in $\Alg[T]{\A}$ is a cofibration if it has the left
      lifting property with respect to the trivial fibrations.
    \end{itemize}
    Moreover, the forgetful functor $\Alg[T]{\A}\to\A$ is both a left Quillen
    functor and a right Quillen functor.
  \item\label{it:Lurie-transfer-model_structure-monad-equivalence}\cite[Theorem~4.3.3.17]{Lur17} There is a canonical equivalence of
    $\infty$-categories
    \[
      \MLoc{\Alg[T]{\A}}\stackrel{\sim}{\longrightarrow}\Alg[\mathbb{L}T]{\MLoc{\A}},
    \]
    where the right-hand side denotes the $\infty$-category of $\mathbb{L}T$-algebra
    objects in the monoidal $\infty$-category of $\MLoc{\A}$.
  \end{enumerate}
\end{theorem}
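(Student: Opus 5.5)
This statement is cited directly from Lurie, so my plan is to reduce both parts to the cited results rather than redo the transfer machinery.

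For part (1), I will apply the standard right-transfer theorem for cofibrantly generated model structures along the free–forgetful adjunction $T\colon\A\rightleftarrows\Alg[T]{\A}\colon U$. Take as generating (trivial) cofibrations the images under $T$ of generating (trivial) cofibrations $I,J$ of $\A$. The category $\Alg[T]{\A}$ is presentable. This holds because $T$ is a left Quillen functor, hence a left adjoint, so it preserves colimits and in particular is accessible. It follows that $U$ creates colimits, and by the standard accessibility results $\Alg[T]{\A}$ is locally presentable.

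The key condition is acyclicity: transfinite composites of pushouts of maps in $T(J)$ must be weak equivalences. Here I use the hypothesis that $T$ is left Quillen. Since $U$ creates colimits, such a pushout in $\Alg[T]{\A}$ is computed as a pushout in $\A$. Concretely, a pushout of $TA\to TB$ along $TA\to X$ can be filtered, and its underlying map is a trivial cofibration because $T$ preserves trivial cofibrations. This is the step I expect to be the main obstacle; I would follow Lurie's argument in [Lur17, 4.3.3], which shows that pushouts along free maps are computed underlying.

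Once acyclicity holds, weak equivalences and fibrations are created by $U$ as stated. The forgetful functor is then right Quillen by construction. It is left Quillen because $T$ also has a right adjoint $R$ (since $T$ is left Quillen), so $U$ has a right adjoint $X\mapsto RX$ with the cofree structure. Moreover, $U$ preserves cofibrations and trivial cofibrations: these are retracts of cell complexes on $T(I)$ and $T(J)$, whose underlying maps are (trivial) cofibrations by the same pushout analysis.

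For part (2), I will verify the hypotheses of Lurie's Barr–Beck type comparison, [Lur17, Theorem~4.3.3.17]. The derived adjunction $\mathbb{L}T\dashv\mathbb{R}U$ on underlying $\infty$-categories is monadic. To see this, note that $\mathbb{R}U$ is conservative, since weak equivalences are detected underlying. It also preserves geometric realisations, because $U$ is left Quillen and therefore preserves homotopy colimits. The $\infty$-categorical Barr–Beck theorem then identifies $\MLoc{\Alg[T]{\A}}$ with algebras over the monad $\mathbb{R}U\circ\mathbb{L}T\simeq\mathbb{L}T$ on $\MLoc{\A}$, giving the claimed equivalence.
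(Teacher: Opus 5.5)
Your proposal is correct and follows essentially the same route as the paper: part (1) adapts Lurie's right-transfer argument from \cite[Proposition~4.3.3.15]{Lur17}, using presentability of $\Alg[T]{\A}$ and the right adjoint of $T$ to give the forgetful functor both adjoints, and part (2) is the Barr--Beck--Lurie theorem with exactly the hypotheses you check. The deviations are minor. You prove the forgetful functor is left Quillen via the cell-complex description of (trivial) cofibrations, whereas the paper notes that its right adjoint is right Quillen because the right adjoint of $T$ is. Also, the acyclicity step you flag as the main obstacle is immediate: the forgetful functor preserves all colimits and sends the free map on $j\in J$ to the trivial cofibration $T(j)$.
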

\begin{proof}
  \eqref{it:Lurie-transfer-model_structure-monad} In
  ~\cite[Proposition~4.3.3.15]{Lur17}, Lurie deals with the special case of a
  monad on a monoidal model category that is induced by tensor product with a
  pair of algebra objects whose underlying object is cofibrant, so that the
  resulting category of algebras is the corresponding category of bimodules
  (compare with~\cite[Example~4.7.3.9]{Lur17}). The proof carries over to this
  more general setting with essentially no modifications. Here, we only make a
  couple of observations for the convenience of the reader.
  \begin{itemize}
  \item By assumption, the monad $T\colon\A\to\A$ admits a right adjoint, say
    $S$. Consequently, $S$ admits a canonical comonad structure and the
    forgetful functor $\Alg[T]{\A}\to\A$ is part of an adjoint triple
    \[
      \begin{tikzcd}
        \Alg[T]{\A}\rar&\A;\lar[shift left=0.5em]\lar[shift right=0.5em]
      \end{tikzcd}
    \]
    moreover, the induced adjoint monad-comonad pair on $\A$ induced by this
    adjoint triple is precisely the adjoint pair formed by $T$
    and $S$, see for example~\cite[Theorem~3.7.7]{BW05}.
  \item Since $\A$ is presentable and the monad $T$ preserves small colimits,
    the category $\Alg[T]{\A}$ is presentable~\cite[Theorem~2.78]{AR94}.
  \end{itemize}
  With this in mind, the argument given in~\cite[Proposition~4.3.3.15]{Lur17}
  shows that $\Alg[T]{\A}$ admits the claimed combinatorial model structure.
  Since the forgetful functor $\Alg[T]{\A}\to\A$ is a right Quillen functor, it
  only remains to show that it is also a left Quillen functor or, equivalently,
  that its right adjoint is a right Quillen functor. Unravelling the definitions,
  the latter statement is equivalent to the assertion that the underlying
  functor of the right-adjoint comonad $S$ is a right Quillen functor, which is
  true by assumption.

  \eqref{it:Lurie-transfer-model_structure-monad-equivalence} The statement
  in~\cite[Theorem~4.3.3.17]{Lur17} concerns the setting
  of~\cite[Proposition~4.3.3.15]{Lur17}, but the proof given therein applies in
  this more general setting. The main point is that the assumptions of the
  Barr--Beck--Lurie Monadicity Theorem~\cite[Theorem~4.7.3.5]{Lur17} are
  satisfied:
  \begin{itemize}
  \item The $\infty$-categories $\MLoc{\Alg[T]{\A}}$ and $\MLoc{\A}$ are presentable
    (\Cref{thm:MLoc-presentable}) and hence admit geometric realisations of
    simplicial objects (in fact, they admit all small colimits).
  \item The functor $U\colon\MLoc{\Alg[T]{\A}}\to\MLoc{\A}$ induced by the forgetful
    functor $\Alg[T]{\A}\to\A$ is conservative. This follows immediately from
    the fact that the weak equivalences in $\Alg[T]{\A}$ are created by the
    forgetful functor.
  \item The functor $U\colon\MLoc{\Alg[T]{\A}}\to\MLoc{\A}$ admits a left
    adjoint. This follows from the fact
    that the forgetful functor $\Alg[T]{\A}\to\A$ is a right Quillen functor.
  \item The functor $U\colon\MLoc{\Alg[T]{\A}}\to\MLoc{\A}$ admits a
    right adjoint, hence it preserves geometric realisations of simplicial
    objects (in fact, it preserves all colimits). This follows from the fact
    that the forgetful functor $\Alg[T]{\A}\to\A$ is a left Quillen functor.
  \end{itemize}
  Therefore, \cite[Theorem~4.7.3.5]{Lur17} implies that there exists a canonical equivalence of $\infty$-categories
  \[
    \begin{tikzcd}
      \MLoc{\Alg[T]{\A}}\ar{rr}{\sim}\drar&&\Alg[\mathbb{L}T]{\MLoc{\A}}\dlar\\
      &\MLoc{\A}
    \end{tikzcd}
  \]
  that is compatible with the corresponding forgetful functors.
\end{proof}

\begin{remark}
  \label{rmk:monad-algebra}
  \Cref{thm:Lurie-transfer-model_structure-monad} is a generalisation of
  \Cref{thm:Lurie-transfer-model_structure}. Indeed, the latter theorem is
  recovered from \Cref{thm:Lurie-transfer-model_structure-monad} by considering
  the apparent monad $T=A\otimes-\colon\M\to\M$, where $A\in\Alg{\M}$ is the given
  algebra whose underlying object is cofibrant.
\end{remark}

In what follows we adopt the assumptions in~\cite[Setup~2.9]{HJ24}. In
particular, we fix a commutative hereditary noetherian ring $\kk$, for example
$\kk=\ZZ$ or a field.

\begin{theorem}[{\cite[Theorem~6.1(a)]{HJ22}}]
  \label{thm:HJ}
  Let $Q$ be a small $\kk$-category satisfying the
  assumptions in~\cite[Setup~2.9]{HJ24}. The Grothendieck category $\LMod{Q}$ of
  $\kk$-linear functors $Q\to\Mod{\kk}$, called $Q$-modules, admits a hereditary
  abelian model structure determined by the triple of subcategories
  $(\C,\W,\LMod{Q})$, where $\W$ is the class of $Q$-modules of finite
  projective/injective dimension (see~\cite[Definition~4.1]{HJ22}) and
  \[
    \C\coloneqq\set{X\in\LMod{Q}}[\forall Y\in\W,\ \Ext<Q>[1]{X}{Y}=0].
  \]
  This is called the \emph{projective model structure} on $\LMod{Q}$.
\end{theorem}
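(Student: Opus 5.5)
This is Holm--J{\o}rgensen's theorem, and the plan is to deduce it in the spirit of \Cref{prop:AH-CW-cotorsion-pair,thm:Ohara}. We identify the cotorsion pair $(\C,\W)$, check the two-out-of-three property of $\W$, and then invoke Becker's criterion~\cite[Corollary~1.1.9]{Bec14} for the triple $(\C,\W,\LMod{Q})$.

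Since every object is to be fibrant, Becker's criterion needs three inputs: $(\C,\W)$ is a complete cotorsion pair; $\W$ is thick (closed under summands and two-out-of-three on short exact sequences); and $\C\cap\W$ equals the class of projective $Q$-modules.

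\textbf{Step 1: thickness of $\W$.} Under the assumptions of~\cite[Setup~2.9]{HJ24}, $\LMod{Q}$ is Frobenius-like: projective and injective $Q$-modules have finite injective and projective dimension, respectively, with a uniform bound. Hence finite projective dimension coincides with finite injective dimension. Closure of $\W$ under summands and two-out-of-three then follows from the long exact $\operatorname{Ext}$ sequence, using a uniform bound on dimensions.

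\textbf{Step 2: $\W$ is cogenerated by a set.} Mimicking the proof of \Cref{prop:AH-CW-cotorsion-pair}, I would show that
\[
\W=\set{Y\in\LMod{Q}}[\forall S\in\mathcal{S},\ \Ext<Q>[1]{S}{Y}=0]
\]
for a suitable set $\mathcal{S}$, namely syzygies of high enough order of the modules $S_q\otimes$ (generators), which is where the bound $d$ is used. A module $Y$ has finite injective dimension exactly when $\Ext<Q>[d+1]{-}{Y}$ vanishes on the cyclic modules; by dimension shifting this becomes an $\operatorname{Ext}^1$ condition against $d$-th syzygies. The analogue of Baer's criterion applies to the finitely generated modules here because $\kk$ is noetherian and hereditary and $Q$ is locally bounded. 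Completeness of $(\C,\W)$ then follows from \cite[Corollary~1.2.2]{Bec14}, since the pair is cogenerated by a set.

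\textbf{Step 3: $\C\cap\W$ is the class of projectives.} Projectives lie in $\C$ trivially, and they lie in $\W$ by Step 1. For the converse, take $X\in\C\cap\W$ and a short exact sequence $0\to K\to P\to X\to0$ with $P$ projective. Then $K\in\W$ by two-out-of-three, so $\Ext<Q>[1]{X}{K}=0$ because $X\in\C$. The sequence therefore splits, and $X$ is projective. Heredity is automatic by \Cref{rmk:injective-model-structures-are-hereditary}.

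The main obstacle is Step 2: showing that finite projective dimension is detected by an $\operatorname{Ext}^1$ condition against a \emph{set}. This requires a uniform dimension bound for $\W$ and a Baer-type criterion over $Q$. I would take both from the standing hypotheses in~\cite[Setup~2.9]{HJ24}, following~\cite[Lemma~4.3 and Theorem~4.4]{HJ22}.
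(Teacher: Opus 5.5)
The paper does not prove this statement; it quotes it from \cite[Theorem~6.1(a)]{HJ22}. The natural comparisons are therefore with the paper's proof of the Hopfological analogue, \Cref{thm:Ohara}, and with Holm--J{\o}rgensen's own argument.

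\textbf{Your outline is correct, and its skeleton is exactly that of \Cref{thm:Ohara}.} You feed \cite[Corollary~1.1.9]{Bec14} three inputs: a complete cotorsion pair $(\C,\W)$, thickness of $\W$, and $\C\cap\W$ equal to the projectives. Your Step~3 is essentially the same splitting argument as in \Cref{thm:Ohara}. Thickness of the class of modules of finite projective dimension holds in any abelian category with enough projectives, so no bound is needed there.

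\textbf{Where you differ from \cite{HJ22} is the cogenerating set for $\W$.} \Cref{prop:AH-CW-cotorsion-pair} is, by the paper's own account, modelled on \cite[Lemma~4.3 and Theorem~4.4]{HJ22}. Following that pattern, Holm--J{\o}rgensen detect $\W$ by $\operatorname{Ext}$-vanishing against the stalk modules $S\langle q\rangle$, which play the role of the simple $H$-modules. You instead apply Baer's criterion to all quotients of representables and pass to $d$-th syzygies.

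Your route is more elementary. It needs only a uniform dimension bound, and Baer's criterion holds in any Grothendieck category with a set of generators, so your noetherian remark is unnecessary. The cost is a large, inexplicit cogenerating set. Their route gives a small, explicit set that identifies the acyclic objects through $Q$-shaped homology. That identification is what makes the weak equivalences homologically meaningful and what underlies compact generation.

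\textbf{The real input is Step~1, not Step~2.} Once $d$ exists, Step~2 is routine. The Gorenstein bound in Step~1 is asserted rather than proved, but it does follow from the setup.
\begin{itemize}
\item The Serre functor gives $\Hom<\kk>{Q(-,q)}{M}\cong Q(q',-)\otimes_{\kk}M$ for a suitable $q'$.
\item Applying $\Hom<\kk>{Q(-,q)}{-}$ to an injective coresolution of $\kk$ stays exact and has injective terms. Hence representable projectives have injective dimension at most $\operatorname{gldim}\kk$.
\item The standard injectives $\Hom<\kk>{Q(-,q)}{J}$ have projective dimension at most $\operatorname{gldim}\kk$, by the same isomorphism.
\item $\LMod{Q}$ is locally noetherian, since $\kk$ is noetherian and $Q$ is Hom-finite and locally bounded. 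So both bounds pass to all projectives and all injectives.
\item Resolving then shows that every module of finite projective or finite injective dimension has both dimensions at most $\operatorname{gldim}\kk$.
\end{itemize}
This supplies your $d$, and also the equality of the two descriptions of $\W$.
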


\begin{remark}
  We do not discuss the injective model structure
  described in~\cite[Theorem~6.1(b)]{HJ22} in this appendix.
\end{remark}

\Cref{thm:HJ} permits us to make the following definition
(see~\Cref{subsec:Quillen-model-cats}, \Cref{thm:Gillespie} and
\Cref{thm:Lurie-transfer-model_structure-monad}). The reader should compare this
with the definition of the stable $\infty$-category of modules over a Hopf
algebra (\Cref{def:StModH}).

\begin{definition}
  Let $Q$ be a small $\kk$-category satisfying the assumptions
  in~\cite[Setup~2.9]{HJ24}. The \emph{$Q$-shaped derived $\infty$-category of
    $\kk$} is the underlying $\infty$-category
  \[
    \DerCat<Q>{\kk}\coloneq\MLoc{\LMod{Q}}
  \]
  of the model structure described in \Cref{thm:HJ}. This is a presentable
  stable $\infty$-category whose homotopy category is the $Q$-shaped derived
  category of $\kk$ defined in~\cite{HJ22}. Moreover, $\DerCat<Q>{\kk}$ is
  compactly generated by~\cite[Theorem~D]{HJ24}
  and~\cite[Remark~1.4.4.3]{Lur17}.
\end{definition}

\begin{proposition}
  \label{prop:HJ-Lurie}
  Let $\kk$ be a commutative hereditary noetherian ring and $Q$ a small
  $\kk$-category satisfying the assumptions in~\cite[Setup~2.9]{HJ24}. Let $A$
  be $\kk$-algebra whose underlying $\kk$-module is flat.\footnote{Compare with
    the assumptions in~\cite[Theorem~1.1]{Jas25}.} Then, the Grothendieck
  category $\LMod{A\otimes Q}$ of $\kk$-linear functors ${A\otimes Q\to\Mod{\kk}}$ admits a
  combinatorial model structure, which is obtained by applying
  \Cref{thm:Lurie-transfer-model_structure-monad} to the monad $T=U(A\otimes-)$
  on $\LMod{Q}$, where the functor
  \[
    U\colon\LMod{Q\otimes A}\longrightarrow\LMod{Q}
  \]
  is given by restriction along the canonical $\kk$-linear functor
  $Q\cong Q\otimes\kk\to Q\otimes A$.
  Moreover, this model structure agrees with the hereditary
  abelian model structure constructed in~\cite[Theorem~6.1(a)]{HJ22}.
\end{proposition}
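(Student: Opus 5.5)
The plan has two parts. First, obtain the combinatorial model structure on $\LMod{A\otimes Q}$ from \Cref{thm:Lurie-transfer-model_structure-monad}. Second, identify it with Holm--J{\o}rgensen's projective model structure by the argument used for \Cref{thm:Ohara}.

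\textbf{The monad and its algebras.} The restriction functor $U\colon\LMod{Q\otimes A}\to\LMod{Q}$ has left adjoint $A\otimes-$, given pointwise by $X\mapsto A\otimes_\kk X$. It also has right adjoint $\Hom<\kk>{A}{-}$, again pointwise. Both adjoints commute with the $Q$-action, since $A$ and $Q$ commute inside $A\otimes Q$. The monad $T=U(A\otimes-)$ is therefore $X\mapsto A\otimes_\kk X$ with the evident $Q$-module structure. Its algebras are exactly the $\kk$-linear functors $A\otimes Q\to\Mod{\kk}$, so $\Alg[T]{\LMod{Q}}\cong\LMod{A\otimes Q}$, compatibly with $U$.

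\textbf{$T$ is left Quillen.} To apply \Cref{thm:Lurie-transfer-model_structure-monad} we must check that $T$ is left Quillen for the projective model structure of \Cref{thm:HJ}. Because $A$ is flat, $T$ is exact, so it preserves monomorphisms. It therefore suffices to show that $T$ preserves $\C$ and $\C\cap\W$. Equivalently, by adjunction, the right adjoint $S=\Hom<\kk>{A}{-}$ should preserve fibrations (all epimorphisms) and trivial fibrations (epimorphisms with kernel in $\W$). We have
\[
  \Ext<Q>[1]{TX}{Y}\cong\Ext<Q>[1]{X}{SY},
\]
and this holds because $T$ is exact and preserves projectives. It then reduces to two facts about $S$:
\begin{itemize}
\item $S$ preserves $\W$, i.e.\ $Q$-modules of finite projective/injective dimension.
\item $S$ preserves epimorphisms.
\end{itemize}
I expect this step to be the main obstacle, since $A$ is only flat and not projective. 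I would use the hereditary noetherian hypothesis on $\kk$, following \cite{HJ22}, where $\W$ is characterised via the restrictions to the stalk functors or via $\Ext$-vanishing against the modules $S\langle q\rangle$. If the direct approach fails, I would instead verify that $T$ preserves the generating (trivial) cofibrations, since flatness makes $A\otimes-$ preserve monomorphisms. I would then check that $T$ preserves $\W$, using that $\W$ is closed under filtered colimits and that $A$ is a filtered colimit of finitely generated free modules (Lazard).

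\textbf{Comparison with \cite[Theorem~6.1(a)]{HJ22}.} In the transferred structure:
\begin{itemize}
\item Fibrations are the morphisms whose image under $U$ is an epimorphism, and $U$ detects epimorphisms, so these are exactly the epimorphisms; hence every object is fibrant.
\item Trivial fibrations are the epimorphisms whose kernel lies in $U^{-1}(\W)$.
\end{itemize}
We then need $U^{-1}(\W)$ to be Holm--J{\o}rgensen's class $\W$ for $Q\otimes A$. This is where the compatibility from \cite{Jas25}/\cite{HJ24} under flat extension enters. Concretely, $\W$ is cogenerated by the stalk modules, and the adjunction $\Ext<A\otimes Q>[1]{A\otimes S}{Y}\cong\Ext<Q>[1]{S}{UY}$ holds exactly as in \Cref{prop:AH-CW-cotorsion-pair}.

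With equal trivial fibrations, the two structures have the same cofibrations. Both have all objects fibrant, so they coincide by \cite[Proposition~E.1.10]{Joy}, as in the proof of \Cref{thm:Ohara}.
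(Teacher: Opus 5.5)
\textbf{Verdict: genuine gap.} Your outline is the paper's:
\begin{itemize}
\item the same monad $T=U(A\otimes-)$, with right adjoint $S=U\Hom<\kk>{A}{-}$;
\item the same reduction of left-Quillen-ness of $T$ to closure properties of $\C$ and $\W$, via $\Ext<Q>[1]{TX}{Y}\cong\Ext<Q>[1]{X}{SY}$;
\item the same comparison with \cite[Theorem~6.1(a)]{HJ22}, via trivial fibrations, fibrancy of all objects and \cite[Proposition~E.1.10]{Joy}.
\end{itemize}
The comparison half is fine. The gap is the step you flag yourself. You reduce it, correctly, to two facts: ``$S$ preserves epimorphisms'' (equivalently ``$T$ preserves projectives'', which you already use to get the $\Ext$-adjunction) and ``$S$ preserves $\W$''. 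You prove neither. Under flatness alone the first is false, so neither the hereditary hypothesis nor Lazard can close it.

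Here is why. Since every object is fibrant, $\C\cap\W$ is exactly the class of projective $Q$-modules. So a left Quillen $T$ must send the trivial cofibration $0\to Q(q,-)$ to a trivial cofibration, i.e.\ $A\otimes Q(q,-)$ must be projective. Now take $\kk=\ZZ$, $A=\mathbb{Q}$, and $Q$ the category whose modules are complexes of abelian groups. Then $Q(q,-)$ is the disc $\ZZ\xrightarrow{\,1\,}\ZZ$ in degrees $q,q+1$, and $\Hom{T(Q(q,-))}{X}\cong\Hom<\ZZ>{\mathbb{Q}}{X^q}$. This is not exact in $X$: test it on an epimorphism from a disc on a free abelian group onto the disc on $\mathbb{Q}$. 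Hence $T(Q(q,-))$ is not projective and $T$ is not left Quillen. The same example kills your generating-cofibration fallback at $0\to Q(q,-)$. Lazard can at best give $T(\W)\subseteq\W$, which holds here anyway, since $A\otimes Q(q,-)$ has projective dimension at most one over hereditary $\kk$. What fails is $T(\C\cap\W)\subseteq\C\cap\W$.

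The paper gets past this point by treating $\Hom<\kk>{A}{-}$ as exact. Then $U$, hence $T$, preserves projectives. Also $S$, being exact with the exact left adjoint $T$, preserves injectives and hence objects of finite injective dimension, that is, $\W$. These are exactly your two missing facts. But exactness of $\Hom<\kk>{A}{-}$ holds when $A$ is $\kk$-projective, and fails, e.g., for $\mathbb{Q}$ over $\ZZ$.

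Under that stronger hypothesis your proof closes and coincides with the paper's. With flatness alone, the appeal to \Cref{thm:Lurie-transfer-model_structure-monad} is not available, including its claim that $U$ is also left Quillen. This holds even though, as your comparison shows, Holm--J{\o}rgensen's model structure on $\LMod{A\otimes Q}$ still has its fibrations and trivial fibrations created by $U$.
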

\begin{proof}
  The structure map $\kk\to A$ induces a functor $Q\cong\kk\otimes Q\to A\otimes
  Q$. This functor gives rise to an adjoint triple
  \begin{equation*}
    \label{eq:change-of-rings_adjunction}
    \begin{tikzcd}[column sep=large]
      \LMod{Q}\rar[shift left=0.5em]{A\otimes -}\rar[shift
      right=0.5em,swap]{\Hom<\kk>{A}{-}}&\Mod{A\otimes Q},\ar{l}[description]{U}
    \end{tikzcd}
  \end{equation*}
  where we think of the functor $U\colon\Mod{A\otimes Q}\to\LMod{Q}$ as a
  forgetful functor, compare with~\cite[Corollary 3.5]{HJ22}. As with any
  adjunction, the resulting endofunctor $T\coloneqq U(A\otimes
  -)\colon\LMod{Q}\to\LMod{Q}$ is endowed with the structure of a monad. Moreover,
  the forgetful functor $U$ trivially satisfies the assumptions of the
  Barr--Beck Monadicity Theorem (even in the stronger form stated
  in~\cite[Proposition~3.7.6]{BW05}), and hence we may and we will identify
  $\Mod{A\otimes Q}$ with the category of $T$-algebra objects in $\LMod{Q}$. We
  need to show that the underlying functor of $T$ is a left Quillen functor so
  that we can apply \Cref{thm:Lurie-transfer-model_structure-monad}.

  Since the functors $U$ and $A\otimes-$ are exact (recall that $A$ is
  assumed to be $\kk$-flat), it es enough to show that the full subcategories
  $\C$ and $\W$ of $\LMod{Q}$ are closed under the action of $T$. For this, we
  observe first that the exact functors $U$ and $A\otimes-$ preserve projective
  objects, for they admit exact right adjoints. It readily follows that
  $T=U(A\otimes-)$ preserves objects of finite projective dimension, which is to
  say that $\W$ is closed under the action of $T$. In order to show that the
  subcategory $\C$ is closed under the action of $T$, we observe that, for
  $C\in\C$ and $W\in\W$, there are isomorphisms of graded
  $\kk$-modules~\cite[Lemma~4.3]{HJ22}
  \begin{align*}
    \Ext<Q>[\bullet]{U(A\otimes C)}{W}&\cong\Ext<A\otimes Q>[\bullet]{A\otimes C}{\Hom<\kk>{A}{W}}\\
                                      &\cong\Ext<Q>[\bullet]{C}{U(\Hom<\kk>{A}{W})},
  \end{align*}
  The required vanishing $\Ext<Q>[1]{U(A\otimes C)}{W}=0$ then follows from the
  fact that the exact functor $\Hom<\kk>{A}{-}$ preserves injective objects, and
  then also objects of finite injective dimension.

  We conclude by showing that the model structure on $\LMod{Q\otimes A}$
  obtained from \Cref{thm:Lurie-transfer-model_structure-monad} agrees with the
  abelian model structure constructed in~\cite[Theorem~6.1(a)]{HJ22},
  in which every object is fibrant. According to~\cite[Proposition~E.1.10]{Joy},
  it is enough to show that both model structures have the same trivial
  fibrations and the same class of fibrant objects. To prove this, observe that
  the exact forgetful functor $U\colon\Mod{A\otimes Q}\to\LMod{Q}$ detects
  epimorphisms. From this, it follows that the fibrations of the model structure
  are the epimorphisms, so that every object is fibrant, and that the trivial
  fibrations are the epimorphisms whose kernel has finite projective/injective
  dimension as a $Q$-module, which are precisely the trivial fibrations of the
  abelian model structure constructed in~\cite[Theorem~6.1(a)]{HJ22}.
  This finishes the proof.
\end{proof}

\Cref{thm:Gillespie} and \Cref{prop:HJ-Lurie} permit us to make the following
definition (compare with~\cite[Remark~3.23]{Jas25}).

\begin{definition}
  \label{def:Q-shaped-A}
  Let $Q$ be a small $\kk$-category satisfying the assumptions
  in~\cite[Setup~2.9]{HJ24} and $A$ a $\kk$-algebra whose underlying
  $\kk$-module is flat. The \emph{$Q$-shaped derived $\infty$-category of
    $A$} is the underlying $\infty$-category
  \begin{align*}
    \DerCat<Q>{A}&\coloneqq\MLoc{\LMod{A\otimes Q}}\\
                 &=\MLoc{\Alg[U(A\otimes-)]{\LMod{A\otimes Q}}}\\
                 &\simeq\Alg[U(A\otimes-)]{\DerCat<Q>{\kk}}.
  \end{align*}
  of the model structure described in \Cref{prop:HJ-Lurie}. This is a
  presentable stable $\infty$-category whose homotopy category is the $Q$-shaped
  derived category of $A$ defined in~\cite{HJ22}. Moreover, $\DerCat<Q>{A}$ is
  compactly generated by~\cite[Theorem~D]{HJ24}. Finally, there is an equivalence of
  $\DerCat{\Mod{\kk}}$-linear $\infty$-categories~\cite[Remark~3.23]{Jas25}
  \[
    \DerCat<Q>{A}\simeq\DerCat{\LMod{A}}\otimes_{\DerCat{\Mod{\kk}}}\DerCat<Q>{\kk},
  \]
  which should be compared with the equivalence in
  \Cref{thm:splitting_formula}.
\end{definition}

\begin{remark}
  \label{rmk:H-vs-Q}
  \Cref{def:Hopfological_derived_category,def:Q-shaped-A} show that both
  $Q$-shaped and Hopfological derived $\infty$-categories can be realised as
  $\infty$-categories of algebras over a colimit-preserving monad acting on a
  compactly generated stable $\infty$-category, see~\Cref{rmk:monad-algebra}. In
  the case of Hopfological derived $\infty$-categories, the monads in question
  are induced by algebra objects of the monoidal category $\LMod{H}$ while, in
  contrast, $\LMod{Q}$ in general is not endowed with a preferred monoidal
  structure, hence working with monads is essential in this comparison. In both
  cases, the corresponding derived $\infty$-categories are defined as the
  underlying $\infty$-categories of model category structures on a suitable
  category of algebras over a monad, obtained by right-transfer from a
  hereditary abelian model structure on an appropriate base category.
\end{remark}

\printbibliography

@Article{Lur15,
  author = {Lurie, Jacob},
  title  = {Rotation invariance in algebraic $K$-theory},
  year   = {2015},
}

@Article{Jas24a,
  author   = {Jasso, Gustavo},
  journal  = {C. R. Math. Acad. Sci. Paris},
  title    = {Derived equivalences of upper-triangular ring spectra via lax limits},
  year     = {2024},
  issn     = {1631-073X,1778-3569},
  pages    = {279--285},
  volume   = {362},
  doi      = {10.5802/crmath.559},
  fjournal = {Comptes Rendus Math\'{e}matique. Acad\'{e}mie des Sciences. Paris},
  mrclass  = {18G80},
  mrnumber = {4745338},
  url      = {https://doi.org/10.5802/crmath.559},
}

@Article{Ros05,
  author     = {Rosick\'y, Ji\v r\'\i},
  journal    = {Theory Appl. Categ.},
  title      = {Generalized {B}rown representability in homotopy categories},
  year       = {2005},
  issn       = {1201-561X},
  pages      = {no. 19, 451--479},
  volume     = {14},
  fjournal   = {Theory and Applications of Categories},
  mrclass    = {18G55 (55P99 55U35)},
  mrnumber   = {2211427},
  mrreviewer = {Peter Bubenik},
}

@Article{NS17a,
  author     = {Nikolaus, Thomas and Sagave, Steffen},
  journal    = {Algebr. Geom. Topol.},
  title      = {Presentably symmetric monoidal {$\infty$}-categories are represented by symmetric monoidal model categories},
  year       = {2017},
  issn       = {1472-2747,1472-2739},
  number     = {5},
  pages      = {3189--3212},
  volume     = {17},
  doi        = {10.2140/agt.2017.17.3189},
  fjournal   = {Algebraic \& Geometric Topology},
  mrclass    = {55U35 (18D10 18G55)},
  mrnumber   = {3704256},
  mrreviewer = {Josu\'e\ Remedios},
  url        = {https://doi.org/10.2140/agt.2017.17.3189},
}

@PhdThesis{Lei22,
  author = {Leip, Malte},
  school = {University of Copenhagen},
  title  = {On the {H}ochschild {H}omology of {H}ypersurfaces as a {M}ixed {C}omplex},
  year   = {2022},
  month  = apr,
  type   = {Ph.D.~thesis},
}

@Article{Bea23,
  author     = {Beardsley, Jonathan},
  journal    = {Homology Homotopy Appl.},
  title      = {On bialgebras, comodules, descent data and {T}hom spectra in {$\infty$}-categories},
  year       = {2023},
  issn       = {1532-0073,1532-0081},
  number     = {2},
  pages      = {219--242},
  volume     = {25},
  doi        = {10.4310/hha.2023.v25.n2.a10},
  fjournal   = {Homology, Homotopy and Applications},
  mrclass    = {55N22 (16T10 18F20 18N70 55P43)},
  mrnumber   = {4689082},
  mrreviewer = {Yifei\ Zhu},
  url        = {https://doi.org/10.4310/hha.2023.v25.n2.a10},
}

@InCollection{Ste18,
  author     = {Stevenson, Greg},
  booktitle  = {Building bridges between algebra and topology},
  publisher  = {Birkh\"{a}user/Springer, Cham},
  title      = {A tour of support theory for triangulated categories through tensor triangular geometry},
  year       = {2018},
  pages      = {63--101},
  series     = {Adv. Courses Math. CRM Barcelona},
  mrclass    = {18E30 (55P60)},
  mrnumber   = {3793858},
  mrreviewer = {Beren Sanders},
}

@Book{Tab15,
  author     = {Tabuada, Gon{\c{c}}alo},
  publisher  = {American Mathematical Society, Providence, RI},
  title      = {Noncommutative motives},
  year       = {2015},
  isbn       = {978-1-4704-2397-1},
  note       = {With a preface by Yuri I. Manin},
  series     = {University Lecture Series},
  volume     = {63},
  doi        = {10.1090/ulect/063},
  mrclass    = {14A22 (14C15 18D10)},
  mrnumber   = {3379910},
  mrreviewer = {Pieter Belmans},
  pages      = {x+114},
}

@PhdThesis{Erg22,
  author = {Ergus, Aras},
  school = {École Polytechnique Fédérale de Lausanne},
  title  = {Hopf algebras and {H}opf--{G}alois extensions in $\infty$-categories},
  year   = {2022},
}

@Article{CT12,
  author     = {Cisinski, Denis-Charles and Tabuada, Gon{\c{c}}alo},
  journal    = {J. K-Theory},
  title      = {Symmetric monoidal structure on non-commutative motives},
  year       = {2012},
  issn       = {1865-2433},
  number     = {2},
  pages      = {201--268},
  volume     = {9},
  doi        = {10.1017/is011011005jkt169},
  fjournal   = {Journal of K-Theory. K-Theory and its Applications in Algebra, Geometry, Analysis \& Topology},
  mrclass    = {19D35 (14A22 14C35 16E40 18D10 19D55)},
  mrnumber   = {2922389},
  mrreviewer = {Satoshi Mochizuki},
}

@Article{BM24,
  author     = {Blumberg, Andrew J. and Mandell, Michael A.},
  journal    = {Mem. Amer. Math. Soc.},
  title      = {The strong {K}\"unneth theorem for topological periodic cyclic homology},
  year       = {2024},
  issn       = {0065-9266,1947-6221},
  number     = {1508},
  pages      = {v+102},
  volume     = {301},
  doi        = {10.1090/memo/1508},
  fjournal   = {Memoirs of the American Mathematical Society},
  isbn       = {978-1-4704-7138-5; 978-1-4704-7952-7},
  mrclass    = {19D55},
  mrnumber   = {4808710},
  mrreviewer = {Gabriel\ Angelini-Knoll},
  url        = {https://doi.org/10.1090/memo/1508},
}

@Article{AMN18,
  author   = {Antieau, Benjamin and Mathew, Akhil and Nikolaus, Thomas},
  journal  = {Selecta Math. (N.S.)},
  title    = {On the {B}lumberg--{M}andell {K}\"{u}nneth theorem for TP},
  year     = {2018},
  issn     = {1022-1824},
  number   = {5},
  pages    = {4555--4576},
  volume   = {24},
  doi      = {10.1007/s00029-018-0427-x},
  fjournal = {Selecta Mathematica. New Series},
  mrclass  = {14F30 (16E40 19D55)},
  mrnumber = {3874698},
  url      = {https://doi.org/10.1007/s00029-018-0427-x},
}

@Book{AR94,
  author     = {Ad\'amek, Ji{\v r}\'i and Rosick\'y, Ji{\v r}\'i},
  publisher  = {Cambridge University Press, Cambridge},
  title      = {Locally presentable and accessible categories},
  year       = {1994},
  isbn       = {0-521-42261-2},
  series     = {London Mathematical Society Lecture Note Series},
  volume     = {189},
  doi        = {10.1017/CBO9780511600579},
  mrclass    = {18Axx (18-02)},
  mrnumber   = {1294136},
  mrreviewer = {J.\ R.\ Isbell},
  pages      = {xiv+316},
  url        = {https://doi.org/10.1017/CBO9780511600579},
}

@Misc{Kerodon,
  author = {Lurie, Jacob},
  note   = {\url{https://kerodon.net}},
  title  = {Kerodon},
  url    = {https://kerodon.net},
}

@Book{Lan21,
  author    = {Land, Markus},
  publisher = {Birkh\"{a}user/Springer, Cham},
  title     = {Introduction to infinity-categories},
  year      = {2021},
  isbn      = {978-3-030-61523-9; 978-3-030-61524-6},
  series    = {Compact Textbooks in Mathematics},
  doi       = {10.1007/978-3-030-61524-6},
  mrclass   = {18-02 (18N50 18N55 18N60)},
  mrnumber  = {4259746},
  pages     = {ix+296},
  url       = {https://doi.org/10.1007/978-3-030-61524-6},
}

@InCollection{Gro20,
  author    = {Groth, Moritz},
  booktitle = {Handbook of homotopy theory},
  publisher = {CRC Press, Boca Raton, FL},
  title     = {A short course on {$\infty$}-categories},
  year      = {2020},
  isbn      = {978-0-815-36970-7},
  pages     = {549--617},
  series    = {CRC Press/Chapman Hall Handb. Math. Ser.},
  mrclass   = {18Nxx (55U35 55U40)},
  mrnumber  = {4197994},
}

@InCollection{ACam16,
  author     = {Antol\'in Camarena, Omar},
  booktitle  = {Mexican mathematicians abroad: recent contributions},
  publisher  = {Amer. Math. Soc., Providence, RI},
  title      = {A whirlwind tour of the world of {$(\infty,1)$}-categories},
  year       = {2016},
  isbn       = {978-1-4704-2192-2},
  pages      = {15--61},
  series     = {Contemp. Math.},
  volume     = {657},
  doi        = {10.1090/conm/657/13088},
  mrclass    = {18-01 (18-02 18D99 18G99)},
  mrnumber   = {3466443},
  mrreviewer = {Ram\'on\ Gonz\'alez Rodr\'iguez},
  url        = {https://doi.org/10.1090/conm/657/13088},
}

@InCollection{QS17,
  author     = {Qi, You and Sussan, Joshua},
  booktitle  = {Categorification and higher representation theory},
  publisher  = {Amer. Math. Soc., Providence, RI},
  title      = {Categorification at prime roots of unity and hopfological finiteness},
  year       = {2017},
  pages      = {261--286},
  series     = {Contemp. Math.},
  volume     = {683},
  doi        = {10.1090/conm/683},
  mrclass    = {16E45 (16E20 16T20 57M27)},
  mrnumber   = {3611717},
  mrreviewer = {Volodymyr Mazorchuk},
  url        = {https://doi.org/10.1090/conm/683},
}

@Article{SS00,
  author     = {Schwede, Stefan and Shipley, Brooke E.},
  journal    = {Proc. London Math. Soc. (3)},
  title      = {Algebras and modules in monoidal model categories},
  year       = {2000},
  issn       = {0024-6115,1460-244X},
  number     = {2},
  pages      = {491--511},
  volume     = {80},
  doi        = {10.1112/S002461150001220X},
  fjournal   = {Proceedings of the London Mathematical Society. Third Series},
  mrclass    = {18D10 (18D50 55P48 55U35)},
  mrnumber   = {1734325},
  mrreviewer = {Mark\ Hovey},
  url        = {https://doi.org/10.1112/S002461150001220X},
}

@Article{Kel94,
  author     = {Keller, Bernhard},
  journal    = {Ann. Sci. \'Ecole Norm. Sup. (4)},
  title      = {Deriving {DG} categories},
  year       = {1994},
  issn       = {0012-9593},
  number     = {1},
  pages      = {63--102},
  volume     = {27},
  coden      = {ASENAH},
  fjournal   = {Annales Scientifiques de l'\'Ecole Normale Sup\'erieure. Quatri\`eme S\'erie},
  mrclass    = {18E30 (16D90)},
  mrnumber   = {1258406},
  mrreviewer = {Jeremy Rickard},
  url        = {http://www.numdam.org/item?id=ASENS_1994_4_27_1_63_0},
}

@Article{Far21,
  author     = {Farinati, Marco A.},
  journal    = {Algebr. Represent. Theory},
  title      = {Hopfological algebra for infinite dimensional {H}opf algebras},
  year       = {2021},
  issn       = {1386-923X},
  number     = {5},
  pages      = {1325--1357},
  volume     = {24},
  doi        = {10.1007/s10468-020-09993-7},
  fjournal   = {Algebras and Representation Theory},
  mrclass    = {16T05 (16E35 18G80 19A49 81R50)},
  mrnumber   = {4313062},
  mrreviewer = {Ji-Wei He},
  url        = {https://doi.org/10.1007/s10468-020-09993-7},
}

@Article{Kho16,
  author     = {Khovanov, Mikhail},
  journal    = {J. Knot Theory Ramifications},
  title      = {Hopfological algebra and categorification at a root of unity: the first steps},
  year       = {2016},
  issn       = {0218-2165},
  number     = {3},
  pages      = {1640006, 26},
  volume     = {25},
  doi        = {10.1142/S021821651640006X},
  fjournal   = {Journal of Knot Theory and its Ramifications},
  mrclass    = {18E30 (16T05 18G60)},
  mrnumber   = {3475073},
  mrreviewer = {Andrei Marcus},
  url        = {http://dx.doi.org/10.1142/S021821651640006X},
}

@InCollection{Del90,
  author     = {Deligne, P.},
  booktitle  = {The {G}rothendieck {F}estschrift, {V}ol.\ {II}},
  publisher  = {Birkh\"auser Boston, Boston, MA},
  title      = {Cat\'egories tannakiennes},
  year       = {1990},
  isbn       = {0-8176-3428-2},
  pages      = {111--195},
  series     = {Progr. Math.},
  volume     = {87},
  mrclass    = {14A99 (12H05 18A99)},
  mrnumber   = {1106898},
  mrreviewer = {James\ Milne},
}

@Article{Spa88,
  author     = {Spaltenstein, N.},
  journal    = {Compositio Math.},
  title      = {Resolutions of unbounded complexes},
  year       = {1988},
  issn       = {0010-437X},
  number     = {2},
  pages      = {121--154},
  volume     = {65},
  coden      = {CMPMAF},
  fjournal   = {Compositio Mathematica},
  mrclass    = {18E25 (32C35)},
  mrnumber   = {932640},
  mrreviewer = {Michael M. Kapranov},
  url        = {http://www.numdam.org/item?id=CM_1988__65_2_121_0},
}

@InCollection{Day70,
  author     = {Day, Brian},
  booktitle  = {Reports of the {M}idwest {C}ategory {S}eminar, {IV}},
  publisher  = {Springer, Berlin},
  title      = {On closed categories of functors},
  year       = {1970},
  pages      = {1--38},
  series     = {Lecture Notes in Mathematics, Vol. 137},
  mrclass    = {18.10},
  mrnumber   = {0272852},
  mrreviewer = {J. F. Kennison},
}

@Book{Wei94,
  author     = {Weibel, Charles A.},
  publisher  = {Cambridge University Press, Cambridge},
  title      = {An introduction to homological algebra},
  year       = {1994},
  isbn       = {0-521-43500-5; 0-521-55987-1},
  series     = {Cambridge Studies in Advanced Mathematics},
  volume     = {38},
  doi        = {10.1017/CBO9781139644136},
  mrclass    = {18-01 (16-01 17-01 20-01 55Uxx)},
  mrnumber   = {1269324},
  mrreviewer = {Kenneth A. Brown},
  pages      = {xiv+450},
}

@Book{Nee01,
  author     = {Neeman, Amnon},
  publisher  = {Princeton University Press, Princeton, NJ},
  title      = {Triangulated categories},
  year       = {2001},
  isbn       = {0-691-08685-0; 0-691-08686-9},
  series     = {Annals of Mathematics Studies},
  volume     = {148},
  doi        = {10.1515/9781400837212},
  mrclass    = {18E30 (55-02 55N20 55U35)},
  mrnumber   = {1812507},
  mrreviewer = {Stanis{\l}aw Betley},
  pages      = {viii+449},
}

@Article{Pos11,
  author     = {Positselski, Leonid},
  journal    = {Mem. Amer. Math. Soc.},
  title      = {Two kinds of derived categories, {K}oszul duality, and comodule-contramodule correspondence},
  year       = {2011},
  issn       = {0065-9266},
  number     = {996},
  pages      = {vi+133},
  volume     = {212},
  doi        = {10.1090/S0065-9266-2010-00631-8},
  fjournal   = {Memoirs of the American Mathematical Society},
  isbn       = {978-0-8218-5296-5},
  mrclass    = {16E35 (16E45 16S37 16T15 18E30)},
  mrnumber   = {2830562},
  mrreviewer = {Ji-Wei He},
  url        = {http://dx.doi.org/10.1090/S0065-9266-2010-00631-8},
}

@Article{Jas25,
  author        = {Jasso, Gustavo},
  title         = {$Q$-shaped derived categories as derived categories of differential graded bimodules},
  year          = {2025},
  month         = jan,
  note          = {accepted for publication in Ann. Represent. Theory},
  archiveprefix = {arXiv},
  copyright     = {Creative Commons Attribution 4.0 International},
  doi           = {10.48550/ARXIV.2501.08255},
  eprint        = {2501.08255},
  primaryclass  = {math.RT},
  publisher     = {arXiv},
}

@Article{BGT14,
  author     = {Blumberg, Andrew J. and Gepner, David and Tabuada, Gon\c calo},
  journal    = {Adv. Math.},
  title      = {Uniqueness of the multiplicative cyclotomic trace},
  year       = {2014},
  issn       = {0001-8708,1090-2082},
  pages      = {191--232},
  volume     = {260},
  doi        = {10.1016/j.aim.2014.02.004},
  fjournal   = {Advances in Mathematics},
  mrclass    = {19D55 (19D23 55N15)},
  mrnumber   = {3209352},
  mrreviewer = {Jeffrey\ Giansiracusa},
  url        = {https://doi.org/10.1016/j.aim.2014.02.004},
}

@Article{QS22,
  author     = {Qi, You and Sussan, Joshua},
  journal    = {Forum Math. Pi},
  title      = {On some {$p$}-differential graded link homologies},
  year       = {2022},
  issn       = {2050-5086},
  pages      = {Paper No. e26, 58},
  volume     = {10},
  doi        = {10.1017/fmp.2022.19},
  fjournal   = {Forum of Mathematics. Pi},
  mrclass    = {57K18 (18N25)},
  mrnumber   = {4524363},
  mrreviewer = {Carlo\ Collari},
  url        = {https://doi.org/10.1017/fmp.2022.19},
}

@Article{QS23,
  author     = {Qi, You and Sussan, Joshua},
  journal    = {Algebr. Geom. Topol.},
  title      = {On some {$p$}-differential graded link homologies, {II}},
  year       = {2023},
  issn       = {1472-2747,1472-2739},
  number     = {7},
  pages      = {3357--3394},
  volume     = {23},
  doi        = {10.2140/agt.2023.23.3357},
  fjournal   = {Algebraic \& Geometric Topology},
  mrclass    = {57K18 (18G99)},
  mrnumber   = {4647678},
  mrreviewer = {Carlo\ Collari},
  url        = {https://doi.org/10.2140/agt.2023.23.3357},
}

@Article{BR23,
  author        = {Brav, Christopher and Rozenblyum, Nick},
  title         = {The cyclic Deligne conjecture and Calabi-Yau structures},
  year          = {2023},
  month         = may,
  archiveprefix = {arXiv},
  copyright     = {arXiv.org perpetual, non-exclusive license},
  doi           = {10.48550/ARXIV.2305.10323},
  eprint        = {2305.10323},
  primaryclass  = {math.AT},
  publisher     = {arXiv},
}

@Article{HSS17,
  author     = {Hoyois, Marc and Scherotzke, Sarah and Sibilla, Nicol\`o},
  journal    = {Adv. Math.},
  title      = {Higher traces, noncommutative motives, and the categorified {C}hern character},
  year       = {2017},
  issn       = {0001-8708,1090-2082},
  pages      = {97--154},
  volume     = {309},
  doi        = {10.1016/j.aim.2017.01.008},
  fjournal   = {Advances in Mathematics},
  mrclass    = {14F05 (14F42 18D05 19D55)},
  mrnumber   = {3607274},
  mrreviewer = {Satoshi\ Mochizuki},
  url        = {https://doi.org/10.1016/j.aim.2017.01.008},
}

@Article{BGT13,
  author     = {Blumberg, Andrew J. and Gepner, David and Tabuada, Gon{\c{c}}alo},
  journal    = {Geom. Topol.},
  title      = {A universal characterization of higher algebraic {$K$}-theory},
  year       = {2013},
  issn       = {1465-3060},
  number     = {2},
  pages      = {733--838},
  volume     = {17},
  doi        = {10.2140/gt.2013.17.733},
  fjournal   = {Geometry \& Topology},
  mrclass    = {19D10 (18D20 19D25 19D55 55N15 55U40)},
  mrnumber   = {3070515},
  mrreviewer = {Ross Staffeldt},
}

@Article{Jas26,
  author        = {Jasso, Gustavo},
  title         = {Stable $\infty$-categories for representation theorists},
  year          = {2026},
  month         = mar,
  archiveprefix = {arXiv},
  copyright     = {Creative Commons Attribution 4.0 International},
  doi           = {10.48550/ARXIV.2603.14970},
  eprint        = {2603.14970},
  primaryclass  = {math.RT},
  publisher     = {arXiv},
}

@InCollection{Kel06,
  author     = {Keller, Bernhard},
  booktitle  = {International {C}ongress of {M}athematicians. {V}ol. {II}},
  publisher  = {Eur. Math. Soc., Z\"urich},
  title      = {On differential graded categories},
  year       = {2006},
  pages      = {151--190},
  mrclass    = {18E30 (14A22 16D90)},
  mrnumber   = {2275593},
  mrreviewer = {Volodymyr V. Lyubashenko},
}

@Book{Joy08,
  author = {Joyal, Andr{\'e}},
  title  = {Notes on quasi-categories},
  year   = {2008},
  month  = jun,
  note   = {In preparation},
}

@Article{Joy02,
  author     = {Joyal, A.},
  journal    = {J. Pure Appl. Algebra},
  title      = {Quasi-categories and {K}an complexes},
  year       = {2002},
  issn       = {0022-4049},
  note       = {Special volume celebrating the 70th birthday of Professor Max Kelly},
  number     = {1-3},
  pages      = {207--222},
  volume     = {175},
  doi        = {10.1016/S0022-4049(02)00135-4},
  fjournal   = {Journal of Pure and Applied Algebra},
  mrclass    = {55U10 (18G55)},
  mrnumber   = {1935979},
  mrreviewer = {Donald M. Davis},
  url        = {http://dx.doi.org/10.1016/S0022-4049(02)00135-4},
}

@Book{Lur09,
  author     = {Lurie, Jacob},
  publisher  = {Princeton University Press, Princeton, NJ},
  title      = {Higher topos theory},
  year       = {2009},
  isbn       = {978-0-691-14049-0; 0-691-14049-9},
  series     = {Annals of Mathematics Studies},
  volume     = {170},
  doi        = {10.1515/9781400830558},
  mrclass    = {18-02 (18B25 18E35 18G30 18G55 55U40)},
  mrnumber   = {2522659},
  mrreviewer = {Mark Hovey},
  pages      = {xviii+925},
}

@Book{Lur17,
  author = {Lurie, Jacob},
  title  = {Higher Algebra},
  year   = {2017},
  month  = may,
  note   = {Available online at the author's webpage: \url{https://www.math.ias.edu/~lurie/papers/HA.pdf}},
}

@Book{Lur18SAG,
  author = {Lurie, Jacob},
  title  = {Spectral {A}lgebraic {G}eometry},
  year   = {2018},
  month  = june,
  note   = {Available online at the author's website: \url{https://www.math.ias.edu/~lurie/papers/SAG-rootfile.pdf}},
}

@Article{Qi14,
  author     = {Qi, You},
  journal    = {Compos. Math.},
  title      = {Hopfological algebra},
  year       = {2014},
  issn       = {0010-437X},
  number     = {1},
  pages      = {1--45},
  volume     = {150},
  doi        = {10.1112/S0010437X13007380},
  fjournal   = {Compositio Mathematica},
  mrclass    = {13D09 (13D15 16T05)},
  mrnumber   = {3164358},
  mrreviewer = {Kenneth A. Brown},
  url        = {http://dx.doi.org/10.1112/S0010437X13007380},
}

@Book{EGNO15,
  author     = {Etingof, Pavel and Gelaki, Shlomo and Nikshych, Dmitri and Ostrik, Victor},
  publisher  = {American Mathematical Society, Providence, RI},
  title      = {Tensor categories},
  year       = {2015},
  isbn       = {978-1-4704-2024-6},
  series     = {Mathematical Surveys and Monographs},
  volume     = {205},
  doi        = {10.1090/surv/205},
  mrclass    = {18D10 (16T05)},
  mrnumber   = {3242743},
  mrreviewer = {Julien Bichon},
  pages      = {xvi+343},
  url        = {http://dx.doi.org/10.1090/surv/205},
}

@Article{Hov02,
  author   = {Hovey, Mark},
  journal  = {Math. Z.},
  title    = {Cotorsion pairs, model category structures, and representation theory},
  year     = {2002},
  issn     = {0025-5874},
  number   = {3},
  pages    = {553--592},
  volume   = {241},
  doi      = {10.1007/s00209-002-0431-9},
  fjournal = {Mathematische Zeitschrift},
  mrclass  = {55U35 (18E30 18G55)},
  mrnumber = {1938704},
  url      = {https://doi.org/10.1007/s00209-002-0431-9},
}

@Book{Hov99,
  author     = {Hovey, Mark},
  publisher  = {American Mathematical Society, Providence, RI},
  title      = {Model categories},
  year       = {1999},
  isbn       = {0-8218-1359-5},
  series     = {Mathematical Surveys and Monographs},
  volume     = {63},
  mrclass    = {55U35 (18D15 18G30 18G55)},
  mrnumber   = {1650134},
  mrreviewer = {Teimuraz Pirashvili},
  pages      = {xii+209},
}

@InCollection{Sto13,
  author     = {{{\v{S}}\v{t}ov\'{i}\v{c}ek}, Jan},
  booktitle  = {Advances in representation theory of algebras},
  publisher  = {Eur. Math. Soc., Z\"urich},
  title      = {Exact model categories, approximation theory, and cohomology of quasi-coherent sheaves},
  year       = {2013},
  isbn       = {978-3-03719-125-5},
  pages      = {297--367},
  series     = {EMS Ser. Congr. Rep.},
  mrclass    = {18E10 (18E30 18F20)},
  mrnumber   = {3220541},
  mrreviewer = {R.\ H.\ Street},
  shorthand = {{\v{S}}to13},
}

@book {Mon93,
    AUTHOR = {Montgomery, Susan},
     TITLE = {Hopf algebras and their actions on rings},
    SERIES = {CBMS Regional Conference Series in Mathematics},
    VOLUME = {82},
 PUBLISHER = {The American Mathematical Society, Providence, RI},
      YEAR = {1993},
     PAGES = {xiv+238},
      ISBN = {0-8218-0738-2},
   MRCLASS = {16W30},
  MRNUMBER = {1243637},
MRREVIEWER = {E.\ J.\ Taft},
       DOI = {10.1090/cbms/082},
       URL = {https://doi.org/10.1090/cbms/082},
}

@Article{Bec14,
  author     = {Becker, Hanno},
  journal    = {Adv. Math.},
  title      = {Models for singularity categories},
  year       = {2014},
  issn       = {0001-8708,1090-2082},
  pages      = {187--232},
  volume     = {254},
  doi        = {10.1016/j.aim.2013.11.016},
  fjournal   = {Advances in Mathematics},
  mrclass    = {55U35 (16E35 18E30)},
  mrnumber   = {3161097},
  mrreviewer = {Timothy\ Porter},
  url        = {https://doi.org/10.1016/j.aim.2013.11.016},
}

@Article{Gil11,
  author     = {Gillespie, James},
  journal    = {J. Pure Appl. Algebra},
  title      = {Model structures on exact categories},
  year       = {2011},
  issn       = {0022-4049},
  number     = {12},
  pages      = {2892--2902},
  volume     = {215},
  doi        = {10.1016/j.jpaa.2011.04.010},
  fjournal   = {Journal of Pure and Applied Algebra},
  mrclass    = {18E10 (18G35 55U15 55U35)},
  mrnumber   = {2811572},
  mrreviewer = {Timothy Porter},
  url        = {https://doi.org/10.1016/j.jpaa.2011.04.010},
}

@Book{Hap88,
  author     = {Happel, Dieter},
  publisher  = {Cambridge University Press, Cambridge},
  title      = {Triangulated categories in the representation theory of finite-dimensional algebras},
  year       = {1988},
  isbn       = {0-521-33922-7},
  series     = {London Mathematical Society Lecture Note Series},
  volume     = {119},
  doi        = {10.1017/CBO9780511629228},
  mrclass    = {16A46 (16A48 16A62 16A64 18E30)},
  mrnumber   = {935124},
  mrreviewer = {Alfred G. Wiedemann},
  pages      = {x+208},
}

@Book{Cis19,
  author    = {Cisinski, Denis-Charles},
  publisher = {Cambridge University Press, Cambridge},
  title     = {Higher categories and homotopical algebra},
  year      = {2019},
  isbn      = {978-1-108-47320-0},
  series    = {Cambridge Studies in Advanced Mathematics},
  volume    = {180},
  doi       = {10.1017/9781108588737},
  mrclass   = {18D05 (18F20 18G55 55U35)},
  mrnumber  = {3931682},
  pages     = {xviii+430},
  url       = {https://doi.org/10.1017/9781108588737},
}

@Book{Kas95,
  author     = {Kassel, Christian},
  publisher  = {Springer-Verlag, New York},
  title      = {Quantum groups},
  year       = {1995},
  isbn       = {0-387-94370-6},
  series     = {Graduate Texts in Mathematics},
  volume     = {155},
  doi        = {10.1007/978-1-4612-0783-2},
  mrclass    = {17B37 (16W30 18D10 20F36 57M25 81R50)},
  mrnumber   = {1321145},
  mrreviewer = {Yu.\ N.\ Bespalov},
  pages      = {xii+531},
  url        = {https://doi.org/10.1007/978-1-4612-0783-2},
}

@Online{OT20a,
  archiveprefix = {arXiv},
  author        = {Mariko Ohara and Dai Tamaki},
  eprint        = {2012.07159},
  title         = {Cotorsion pairs in Hopfological algebra},
  year          = {2020},
}

@Article{Oha25a,
  author        = {Ohara, Mariko},
  title         = {Hopfological invariants for tame subextensions},
  year          = {2025},
  month         = feb,
  archiveprefix = {arXiv},
  copyright     = {Creative Commons Attribution 4.0 International},
  doi           = {10.48550/ARXIV.2502.15191},
  eprint        = {2502.15191},
  primaryclass  = {math.KT},
  publisher     = {arXiv},
}

@Article{Oha25,
  author   = {Ohara, Mariko},
  journal  = {J. Algebra},
  title    = {A model structure and {H}opf-cyclic theory on the category of coequivariant modules over a comodule algebra},
  year     = {2025},
  issn     = {0021-8693,1090-266X},
  pages    = {365--389},
  volume   = {668},
  doi      = {10.1016/j.jalgebra.2025.01.011},
  fjournal = {Journal of Algebra},
  mrclass  = {16T15 (16E30 18G20 55N25 55U10 57T05)},
  mrnumber = {4857709},
  url      = {https://doi.org/10.1016/j.jalgebra.2025.01.011},
}

@Article{Kra05,
  author     = {Krause, Henning},
  journal    = {Compos. Math.},
  title      = {The stable derived category of a {N}oetherian scheme},
  year       = {2005},
  issn       = {0010-437X},
  number     = {5},
  pages      = {1128--1162},
  volume     = {141},
  doi        = {10.1112/S0010437X05001375},
  fjournal   = {Compositio Mathematica},
  mrclass    = {18E30 (14F05 16E30 16E65 55U35)},
  mrnumber   = {2157133},
  mrreviewer = {J. P. C. Greenlees},
  url        = {https://doi.org/10.1112/S0010437X05001375},
}

@Article{Oha24,
  author        = {Ohara, Mariko},
  title         = {A model structure on the category of equivariant A-modules over a Hopf algebra},
  year          = {2024},
  month         = jan,
  archiveprefix = {arXiv},
  copyright     = {Creative Commons Attribution 4.0 International},
  doi           = {10.48550/ARXIV.2401.05687},
  eprint        = {2401.05687},
  primaryclass  = {math.KT},
  publisher     = {arXiv},
}

@Article{HJ22,
  author   = {Holm, Henrik and J{\o}rgensen, Peter},
  journal  = {J. Lond. Math. Soc. (2)},
  title    = {The {$Q$}-shaped derived category of a ring},
  year     = {2022},
  issn     = {0024-6107},
  number   = {4},
  pages    = {3263--3316},
  volume   = {106},
  doi      = {10.1112/jlms.12662},
  fjournal = {Journal of the London Mathematical Society. Second Series},
  mrclass  = {18G80 (16E35 18E35 18N40)},
  mrnumber = {4524199},
}

@InCollection{Sal79,
  author     = {Salce, Luigi},
  booktitle  = {Symposia {M}athematica, {V}ol. {XXIII} ({C}onf. {A}belian {G}roups and their {R}elationship to the {T}heory of {M}odules, {INDAM}, {R}ome, 1977)},
  publisher  = {Academic Press, London-New York},
  title      = {Cotorsion theories for abelian groups},
  year       = {1979},
  pages      = {11--32},
  mrclass    = {20K40 (18E40)},
  mrnumber   = {565595},
  mrreviewer = {P. L. Sperry},
}

@Book{Joy,
  author = {Joyal, Andr{\'e}},
  title  = {The {T}heory of {Q}uasi-{C}ategories and its {A}pplications},
  note   = {In preparation},
}

@InProceedings{HJ24a,
  author    = {Holm, Henrik and J{\o}rgensen, Peter},
  booktitle = {Triangulated Categories in Representation Theory and Beyond},
  title     = {A Brief Introduction to the Q-Shaped Derived Category},
  year      = {2024},
  address   = {Cham},
  editor    = {Bergh, Petter Andreas and Oppermann, Steffen and Solberg, {\O}yvind},
  pages     = {141--167},
  publisher = {Springer Nature Switzerland},
  isbn      = {978-3-031-57789-5},
}

@Article{HJ24,
  author   = {Holm, Henrik and J{\o}rgensen, Peter},
  journal  = {Trans. Amer. Math. Soc.},
  title    = {The {$Q$}-shaped derived category of a ring---compact and perfect objects},
  year     = {2024},
  issn     = {0002-9947,1088-6850},
  number   = {5},
  pages    = {3095--3128},
  volume   = {377},
  doi      = {10.1090/tran/8979},
  fjournal = {Transactions of the American Mathematical Society},
  mrclass  = {18G80 (16E35 18N40)},
  mrnumber = {4744776},
  url      = {https://doi.org/10.1090/tran/8979},
}

@Article{BW05,
  author   = {Barr, Michael and Wells, Charles},
  journal  = {Repr. Theory Appl. Categ.},
  title    = {Toposes, triples and theories},
  year     = {2005},
  note     = {Corrected reprint of the 1985 original [MR0771116]},
  number   = {12},
  pages    = {x+288},
  fjournal = {Reprints in Theory and Applications of Categories},
  mrclass  = {18-02 (03G30 18B25 18C10 18C15)},
  mrnumber = {2178101},
}

@Article{Ram23,
  author        = {Ramzi, Maxime},
  title         = {Separability in homotopical algebra},
  year          = {2023},
  month         = oct,
  archiveprefix = {arXiv},
  doi           = {10.48550/arXiv.2305.17236},
  eprint        = {2305.17236},
  publisher     = {arXiv},
}

@Article{Mat16,
  title={The Galois group of a stable homotopy theory},
  author={Mathew, Akhil},
  journal={Advances in Mathematics},
  volume={291},
  pages={403--541},
  year={2016},
  publisher={Elsevier}
}

@InCollection{Kra10,
  author     = {Krause, Henning},
  booktitle  = {Triangulated categories},
  publisher  = {Cambridge Univ. Press, Cambridge},
  title      = {Localization theory for triangulated categories},
  year       = {2010},
  pages      = {161--235},
  series     = {London Math. Soc. Lecture Note Ser.},
  mrclass    = {18E35 (18E30)},
  mrnumber   = {2681709},
}

@article{lurie2017elliptic,
  title={Elliptic cohomology I: Spectral abelian varieties},
  author={Lurie, Jacob},
  year={2017}
}

\end{document}
